\newtheorem{prop}{Proposition}[section]
\newtheorem{lem}[prop]{Lemma}
\newtheorem{fol}[prop]{Corollary}
\newtheorem{theo}[prop]{Theorem}
\theoremstyle{definition}
\newtheorem{defi}[prop]{Definition}
\theoremstyle{remark}
\newtheorem{rem}[prop]{Remark}
\newtheorem{remdef}[prop]{Definition and Remark}
\newtheorem{exmp}[prop]{Example}
\newcommand{\field}[1]{\mathbb{#1}}
\newcommand{\ZZ}{\field{Z}}
\newcommand{\RR}{\field{R}}
\newcommand{\NN}{\field{N}}
\newcommand{\KKK}{\mathcal{K}}
\newcommand{\PP}{\mathcal{P}}
\newcommand{\RRR}{\field{R}}
\newcommand{\SSS}{\field{S}}
\newcommand{\SSSS}{\mathcal{S}}
\newcommand{\AAA}{\mathcal{A}}
\DeclareMathOperator*{\codim}{\mathrm{codim}}
\DeclareMathOperator*{\rk}{\mathrm{rk}}
\DeclareMathOperator*{\corank}{\mathrm{corank}}
\DeclareMathOperator{\id}{\mathrm{id}}
\DeclareMathOperator{\supp}{\mathrm{supp}}
\DeclareMathOperator{\proj}{\mathrm{proj}}
\DeclareMathOperator*{\GL}{\mathrm{GL}}
\DeclareMathOperator*{\Rad}{\mathrm{Rad}}
\DeclareMathOperator*{\Cham}{\mathrm{Cham}}
\DeclareMathOperator*{\red}{\mathrm{red}}
\DeclareMathOperator*{\St}{\mathrm{St}}
\newcommand{\ol}[1]{\overline{#1}}
\newcommand{\ul}[1]{\underline{#1}}
\title[Simplicial arrangements on convex cones]
{Simplicial arrangements on convex cones}
\author{M.~Cuntz}
\address{Michael Cuntz,
Institut f\"ur Algebra, Zahlentheorie und Diskrete Mathematik,
Fakult\"at f\"ur Mathematik und Physik,
Leibniz Universit\"at Hannover,
Welfengarten 1,
D-30167 Hannover, Germany}
\email{cuntz@math.uni-hannover.de}
\author{B.~M\"uhlherr}
\address{Bernhard M\"uhlherr,
Mathematisches Institut, Arndtstra{\ss}e 2, 35392 Gie{\ss}en, Germany}
\email{bernhard.muehlherr@math.uni-giessen.de}
\author{C.~J.~Weigel}
\address{Christian J. Weigel,
Mathematisches Institut, Arndtstra{\ss}e 2, 35392 Gie{\ss}en, Germany}
\email{christian.j.weigel@math.uni-giessen.de}
\begin{document}

\begin{abstract}
We introduce the notion of a Tits arrangement on a convex open cone as a special case of (infinite) simplicial arrangements. Such an object carries a simplicial structure similar to the geometric representation of Coxeter groups. The standard constructions of subarrangements and restrictions, which are known in the case of finite hyperplane arrangements, work as well in this more general setting.
\end{abstract}

\maketitle

\tableofcontents

\section*{Introduction}

Let $(W,S)$ be a Coxeter system. There is a canonical real representation
of $W$ which is called the {\sl geometric representation} of $(W,S)$ and which
is known to be faithful by the work of Tits in the 1960s (a proof can be found in \cite{Ti13}.
In the proof he considers a convex cone $T$ in the dual module
which is stabilized under the action of $W$. By means of the standard generating set $S$
he defines an open simplicial cone $C$ which turns out to be a prefundamental domain for
the action of $W$ on $T$. 

It is now the standard terminology to call the cone $T$  the {\sl Tits cone}
and $C$ the {\sl (open) fundamental chamber}. The conjugates of the 
fundamental generators act as linear reflections on the module and its dual. Their reflection hyperplanes
yield a simplicial decomposition of $T$ and thus provide a simplicial complex
which is the {\it Coxeter complex} of $(W,S)$. The geometric representation of $(W,S)$,
the geometry of the Tits cone, and the properties of the Coxeter complex are fundamental
tools for the investigation of Coxeter groups.

Coxeter systems play an important role in various branches of mathematics. In combinatorics
and geometric group theory they are a rich source of interesting phenomena (\cite{BB05}, \cite{Da08}). In 
representation theory and the theory of algebraic groups the
{\sl crystallographic} Coxeter systems occur as {\sl Weyl groups} of several
structures. There, they play the role of basic invariants, often called the {\sl type} of
the algebraic object under consideration.

In \cite{HY08} and \cite{p-CH09a} {\sl Weyl groupoids} and their 
root systems have been introduced as basic invariants of Nichols algebras.
Their definition emerged from earlier work in \cite{p-H-06} and \cite{p-AHS-08}, they are
natural generalizations of Weyl groups. 
Weyl groupoids of finite type are those which generalize finite Weyl groups.
These have been studied intensively in \cite{CH09}, \cite{p-CH10}, \cite{CH12}, \cite{CH11}, where
a classification has been obtained. This classification is considerably harder 
than the classification of the finite Weyl groups. It was observed by the first
two authors in 2009 that the final outcome of this classification 
in rank 3 is intimately
related to Gr\"unbaum's list of simplicial arrangements in \cite{p-G-71}. There is an obvious
explanation of this connection: To each Weyl groupoid one can associate a Tits cone
which is the whole space if and only if the Weyl groupoid is of finite type.
Based on these considerations, {\sl crystallographic} arrangements have been introduced
in \cite{Cu11}, where it is shown that their classification is a consequence of the classification of finite
Weyl groupoids.

In view of the importance of the Tits cone and the Coxeter complex for 
the understanding of Coxeter systems it is natural to investigate their analogues
in the context of Weyl groupoids. Although it is intuitively clear what has to 
be done in order to generalize these notions, there are
instances where things have to be modified or some extra argument is needed.
Our intention is to provide the basic theory of the Tits cone and the Coxeter complex
of a Weyl groupoid. This paper deals with the combinatorial aspects of this project and therefore
the crystallographic condition doesn't play a role at all. Hence, here we deal with
{\sl Tits arrangements} rather than with Weyl groupoids.

The basic strategy for setting
up the framework is to start with a simplicial arrangement ${\AAA}$ on a convex open cone $T$ and
to investigate the abstract simplicial complex ${\mathcal{S}}({\AAA},T)$ associated with it. This is a {\sl gated
chamber complex} and has therefore a natural type function. 
We call a simplicial arrangement on a convex open cone a Tits arrangement
if  ${\mathcal{S}}({\mathcal{A}},T)$ is a thin chamber complex and introduce the notion of a root system of a Tits arrangement. Given a simplicial arrangement of rank $r$
on an open convex cone, there are two canonical procedures to produce
simplicial arrangements of smaller rank.  

We would like to point out that this paper is meant to be a contribution to the foundations
of the theory of Weyl groupoids of arbitrary type. The concepts and ideas are at least
folklore and several of the results for which we give proofs are well established
in the literature. The only exception is probably our systematic use of gated
chamber complexes at some places. 
Our principal goal here is to provide a fairly complete
account of the basic theory of the Tits cone of a Weyl groupoid
by developing the corresponding notions to the extent that is needed for just this purpose.
Therefore, we have decided to include short proofs for standard facts and refer
to other sources only when we need a more elaborate result. 

We are not able to give a systematic account of the origins
of the concepts and ideas which play a role in this paper. 
Here are some comments based on the best of our knowledge:

\begin{enumerate}[label=\arabic*.]
        \item  Simplicial arrangements were first introduced and studied by Melchior \cite{a-Melchi41} and subsequently by Gr\"unbaum \cite{p-G-71}. Shortly afterwards, simplicial arrangements attracted attention in the seminal work of Deligne \cite{MR0422673}: they are a natural context to study the $K(\pi,1)$ property of complements of reflection arrangements, since the set of reflection hyperplanes of a finite Coxeter group is a simplicial arrangement.
They further appeared as examples or counterexamples to conjectures on arrangements.
\item We do not know where arrangements of hyperplanes on convex cones were considered for the
first time. The concept seems most natural and they are mentioned in 
\cite{Par14} without further reference. 
Of course, our definition of a simplicial arrangement on an open convex cone is 
inspired by the Tits cone of a Coxeter system.  
\item The fact that
arrangements of hyperplanes provide interesting examples of gated sets in metric spaces
appears in \cite{BLSWZ} for the first time. At least in the simplicial case it was  
observed much earlier \cite{Ti74}.
\item The observation that there is a natural link between root systems and simplicial arrangements
is quite natural. We already mentioned that it was our starting point to investigate
the Tits-cone of a Weyl groupoid. But it also appears in Dyer's work on rootoids \cite{Dye11}, \cite{Dye11-2}.
It is conceivable that the observation was made much earlier by other people and is
hidden somewhere in the literature.
\end{enumerate}

The paper is organized as follows.

In Section \ref{sec:ex} we will provide some basic examples for our objects of interest. These will be used for later reference, since they are either basic examples or counterexamples for the properties which we introduce. Like in Example \ref{exm:thinf}, the geometric representation of a Coxeter group is a prominent sample for simplicial arrangements.

In Section \ref{sec:hyp} we fix notation and develop the notion of a hyperplane arrangement on an open convex cone in a real vector space $V$. We introduce the common substructures for these objects, i.\ e.\ subarrangements and restrictions. Furthermore we define the chamber graph of an arrangement and show that parabolic subsets of the set of chambers are gated subsets with respect to the canonical metric.

In Section \ref{sec:tits} we introduce simplicial arrangements and Tits arrangements. We add additional combinatorial structure to a Tits arrangement by associating to the set of hyperplanes a set of roots, linear forms which define the hyperplanes. With respect to the study of Nichols algebras root systems with additional properties will be interesting, however in this paper we deal with roots systems as very general objects. We also associate to a simplicial arrangement a canonical simplicial complex. The main results regarding this complex are proven in the appendix.

In Section \ref{subarr} we consider subarrangements and restrictions of simplicial arrangements and Tits arrangements. We give criteria when the substructures of a simplicial/Tits arrangement is again a simplicial/Tits arrangement, and in the case of a Tits arrangement we describe canonical root systems.

The appendix provides proofs for the properties of the poset associated to a simplicial hyperplane arrangement which are stated in Section 4. While most of these properties are quite intuitive, a rigid proof can be tedious.

Appendix \ref{APP:simpcomp} recalls the basic definitions of simplicial complexes as we need them.

In Appendix \ref{APP:S} we show that the poset $\SSSS(\AAA, T)$ associated to a simplicial hyperplane arrangement $(\AAA, T)$ is indeed a simplicial complex. The results of this Section are summarized in Proposition \ref{Ssimpcomp}. In particular, we provide an equivalent definition for simplicial cones, which has an implicit simplicial structure.

Appendix \ref{APP:SCC} provides the remaining properties of $\SSSS$. The first part recalls the definitions of chamber complexes and type functions.

In the second part it is shown that $\SSSS$ is a gated chamber complex with a type function, a collection of the results can be found in Proposition \ref{prop-Sproperties}. We also show that the notions of being spheric and thin, which we introduced for simplicial hyperplane arrangements before, coincide with the classical notions for chamber complexes.

\medskip

\noindent \textbf{Acknowledgements.}
Part of the results were achieved during a Mini-Workshop on Nichols algebras and Weyl groupoids at the Mathematisches Forschungsinstitut Oberwolfach in October 2012, and during meetings in Kaiserslautern, Hannover, and Giessen supported by the Deutsche Forschungsgemeinschaft within the priority programme 1388. One of the authors was partly funded by a scholarship of the Justus-Liebig-Universit\"at Giessen.
\section{Introductory examples}
\label{sec:ex}

\begin{exmp}
Consider the following setting. Let 
\begin{align*}
V &= \RR^2,\\
T &= \{(x,y) \mid x,y \in \RR_{>0}\},\\
L &= \{(x,x) \mid x \in \RR\}.
\end{align*}

Then $T \setminus L$ consists of two connected components
$$K_1 := \{(x,y) \in T \mid x <y\}, K_2 := \{(x,y) \in T \mid x>y\}.$$

Let $\alpha_1$, $\alpha_2$ be the dual basis in $V^\ast$ to $(1,0), (0,1)$, then we can write
\begin{align*}
K_1 &= \alpha_1^{-1}(\RR_{>0}) \cap (-\alpha_1+ \alpha_2)^{-1}(\RR_{>0}),\\
K_2 &= \alpha_2^{-1}(\RR_{>0}) \cap (\alpha_1 - \alpha_2)^{-1}(\RR_{>0}).
\end{align*}
We will later define objects which can be written in this way, i.\ e.\ as intersections of half spaces, as simplicial cones.

Both $K_1$ and $K_2$ are cones with exactly two bounding hyperplanes, for $K_1$ these are $L = \ker(\alpha_1 - \alpha_2)$ and $\{(0,y) \mid y \in \RR\} = \ker(\alpha_1)$, for $K_2$ these are $L$ and $\{(x,0) \mid x \in \RR\} = \ker(\alpha_2)$.

Note that in this example only one of the bounding hyperplanes, namely $L$, of $K_1$, $K_2$ meets $T$, while the other meets $\ol{T}$, but not $T$ itself.
\begin{figure}[ht]%
\includegraphics[width=0.6\columnwidth]{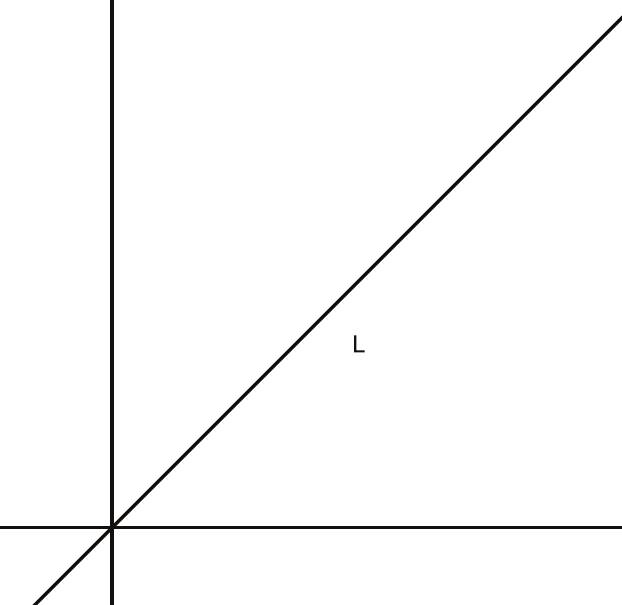}%
\caption{The line setting from Example \ref{exm:notthin}.}%
\label{fig:notthin}%
\end{figure}

\label{exm:notthin}
\end{exmp}

\begin{exmp}
Let $V$ and $T$ be as before. Define for $n \in \NN_{>0}$ the lines
\begin{align*}
L_n &= \{(x,y) \mid y - nx = 0\},\\
{L_n}' &=  \{(x,y) \mid y - \frac{1}{n}x = 0\}.
\end{align*}

Then the connected components of $T \setminus \bigcup_{n \in \NN_{>0}} (L_n \cup {L_n}')$ are again simplicial cones for suitable linear forms. The number of connected components is not finite in this case, and every component has bounding hyperplanes which meet $T$ (see Figure \ref{fig:thinf}).

\begin{figure}[ht]%
\includegraphics[width=0.6\columnwidth]{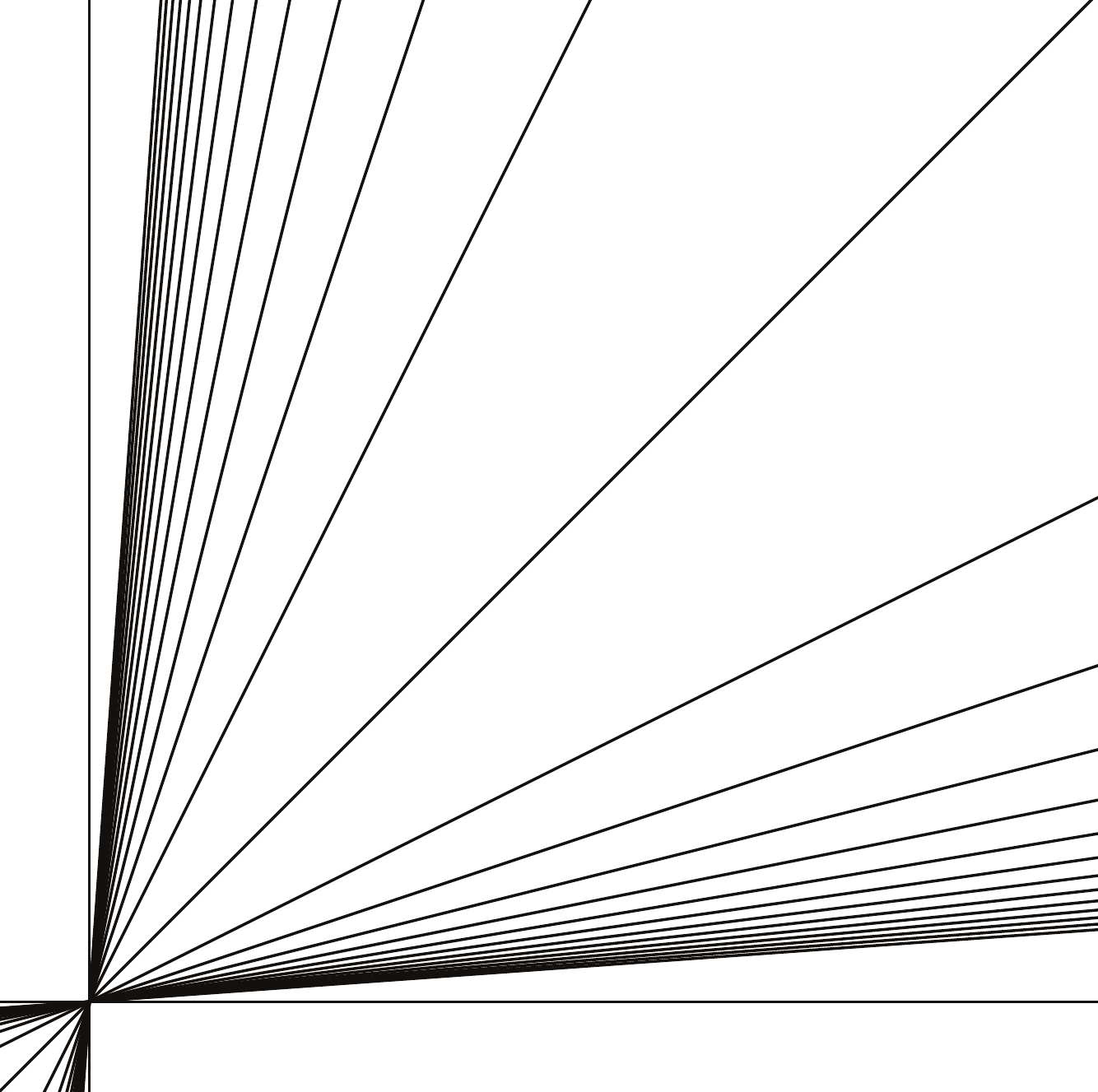}%
\caption{An infinite collection of hyperplanes, tessellating $\RR_{>0} \times \RR_{>0}$.}%
\label{fig:thinf}%
\end{figure}

\label{exm:thinf}
\end{exmp}

\begin{exmp}
Let $V = \RR^3$ and
$$T := \{(x_1, x_2, x_3) \in \RR^3 \mid x_1^2 + x_2^2 - x_3^2 < 0\}.$$

Then $T$ is a convex open cone. Consider the universal Coxeter group $W$ in 3 generators, i.\ e.
$$
W = \langle s,t,u \mid s^2 = t^2 = u^2 = 1 \rangle.
$$

The geometric representation (see \cite[Chapter 5.3]{Hu90}) of $W$ yields a set of reflection hyperplanes $\AAA$, which meet $T$ after choosing a suitable basis.

We obtain the picture in Figure \ref{fig:univ3} by intersecting $T$ with a hyperplane parallel to $\langle e_1, e_2 \rangle$, which corresponds to the Beltrami-Klein model of hyperbolic 2-space.

\begin{figure}[ht]%
\includegraphics[width=0.6\columnwidth]{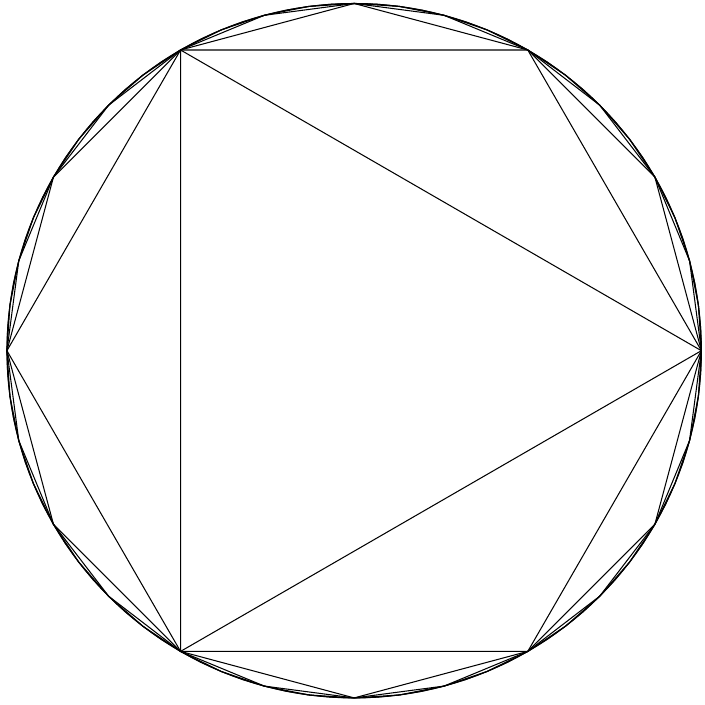}%
\caption{The hyperbolic hyperplane arrangement associated to $W$.}%
\label{fig:univ3}%
\end{figure}

The connected components of $T \setminus \bigcup_{H \in \AAA} H$ are again cones which carry a simplicial structure. However, the vertices of the simplices are not contained in $T$, but in its boundary $\partial T$.
\label{exm:univ3}
\end{exmp}

\section{Hyperplane arrangements, subarrangements and restrictions}
\label{sec:hyp}
\subsection{Hyperplane arrangements}

Throughout this paper, all topological properties are with respect to the standard topology of $\RR^r$, unless stated otherwise.

\begin{defi}
Let $V = \RR^r$. A subset $K \subset V$ is called a \textit{cone}, if $\lambda v \in K$ for all $v \in T$, $0 < \lambda \in \RR$. For a subset $X \subset V$ we call
$$\RR_{>0}X := \{ \lambda x \mid x \in X, \lambda \in \RR_{>0}\}$$
the \textit{cone over $X$}.

Let $\AAA$ be a set of linear hyperplanes in $V = \RR^r$, and $T$ an open convex cone. We say that \textit{$\AAA$ is locally finite in $T$}, if for every $x \in T$ there exists a neighborhood $U_x \subset T$ of $x$, such that $\{H \in \AAA \mid H \cap X \neq \emptyset\}$ is a finite set.

A \textit{hyperplane arrangement (of rank $r$)} is a pair $(\AAA, T)$, where $T$ is a convex open cone in $V$, and $\AAA$ is a (possibly infinite) set of linear hyperplanes such that 
\begin{enumerate}
	\item $H \cap T \neq \emptyset$ for all $H \in \AAA$,
	\item $\AAA$ is locally finite in $T$.
\end{enumerate}
If $T$ is unambiguous from the context, we also call the set $\AAA$ a hyperplane arrangement.

Let $X \subset \ol{T}$. Then the \textit{support of $X$} is defined as
$$\supp_\AAA(X) = \{H \in \AAA \mid X \subset H\}.$$
If $X= \{x\}$ is a singleton, we write $\supp_\AAA(x)$ instead of $\supp_\AAA(\{x\})$, and we omit the index $\AAA$, if $\AAA$ is unambiguous from the context. In this paper we call the set $$\sec_\AAA(X) := \bigcup_{x \in X} \supp_\AAA(x) = \{H \in \AAA \mid H \cap X \neq \emptyset\}$$ the \textit{section of $X$ (in $\AAA$)}. Again, we will omit $\AAA$ when there is no danger of confusion.

The connected components of $T \setminus \bigcup_{H \in \AAA} H$ are called \textit{chambers}, denoted with $\mathcal{K}(\AAA,T)$ or just $\mathcal{K}$, if $(\AAA,T)$ is unambiguous.

Let $K \in \KKK(\AAA,T)$. Define the \textit{walls of $K$} as 
$$W^K := \{H \leq V \mid H \text{ hyperplane, } \langle H \cap \overline{K} \rangle = H, H \cap K^\circ = \emptyset\}.$$

Define the \textit{radical of $\AAA$} as $\Rad(\AAA) := \bigcap_{H \in \AAA} H$. We call the arrangement \textit{non-degenerate}, if $\Rad(\AAA) = 0$, and \textit{degenerate} otherwise. A hyperplane arrangement is \textit{thin}, if $W^K \subset \AAA$ for all $K \in \KKK$.
\end{defi}

\begin{rem}
\begin{enumerate}
	\item By construction, the chambers $\KKK$ are open sets.
	\item In our notation, if $(\AAA,T)$ is a hyperplane arrangement, $\AAA$ being locally finite in $T$ implies: For every point $x \in T$ there exists a neighborhood $U_x \subset T$ of $x$ such that $\sec_\AAA(U_x)$ is finite.
\end{enumerate}
\end{rem}

\begin{lem}
Let $(\AAA, T)$ be a hyperplane arrangement. Then for every point $x \in T$ there exists a neighborhood $U_x$ such that $\supp(x) = \sec(U_x)$.
Furthermore the set $\sec(X)$ is finite for every compact set $X \subset T$.
\label{compfin}
\end{lem}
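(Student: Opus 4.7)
The plan is to handle the two assertions separately, using local finiteness of $\AAA$ in $T$ for the first and standard compactness for the second.

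For the first statement, I start with a point $x \in T$ and choose some neighborhood $U \subset T$ of $x$ with $\sec(U)$ finite, which is exactly the local finiteness hypothesis. Write $\sec(U) = \{H_1, \dots, H_n\}$. Clearly $\supp(x) \subset \sec(U)$, since $x \in U$, so $\supp(x)$ consists of some of the $H_i$; relabel so that $\supp(x) = \{H_1, \dots, H_k\}$ and $x \notin H_{k+1}, \dots, H_n$. Because each $H_j$ with $j > k$ is a closed subset of $V$ not containing $x$, the intersection
\[
U_x := U \cap \bigcap_{j=k+1}^n (V \setminus H_j)
\]
is still an open neighborhood of $x$ contained in $U$. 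By construction $U_x$ avoids every hyperplane in $\sec(U) \setminus \supp(x)$, so $\sec(U_x) \subset \supp(x)$. Conversely $x \in U_x$ forces $\supp(x) \subset \sec(U_x)$, and equality follows.

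For the second statement, let $X \subset T$ be compact. For each $x \in X$ the first part (or already the mere definition of local finiteness) furnishes an open neighborhood $U_x$ of $x$ with $\sec(U_x)$ finite. The family $\{U_x\}_{x \in X}$ is an open cover of $X$, so by compactness it admits a finite subcover $U_{x_1}, \dots, U_{x_m}$. Then
\[
\sec(X) \subset \sec\Bigl(\bigcup_{i=1}^m U_{x_i}\Bigr) = \bigcup_{i=1}^m \sec(U_{x_i}),
\]
and the right-hand side is a finite union of finite sets.

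There is no real obstacle here; the only mild subtlety is noticing that local finiteness in the neighborhood form implies we can shrink to a neighborhood that sees \emph{only} the hyperplanes through $x$, which is what makes the first claim sharper than the bare definition. The compactness argument is then immediate once the first part (or its weak form) is in hand.
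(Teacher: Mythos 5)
Your proof is correct and follows essentially the same strategy as the paper: both parts use local finiteness to get a neighborhood meeting only finitely many hyperplanes, discard the finitely many not passing through $x$, and then conclude the compact case by a finite subcover. The only cosmetic difference is that you remove all the offending hyperplanes at once by intersecting with their open complements, whereas the paper shrinks an $\varepsilon$-ball iteratively, one hyperplane at a time; your version is slightly more economical but not a different argument.
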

\begin{proof}
Let $x \in T$, in particular there exists an open neighborhood $U$ of $x$ such that $\sec(U)$ is finite. By taking the smallest open $\varepsilon$-ball contained in $U$ and centred at $x$, we can assume $U = U_\varepsilon(x)$. Let $H \in \sec(U)$, with $x \notin H$. Let $\delta = d(H,x) > 0$, then $\delta < \varepsilon$, and $U' := U_{\frac{\delta}{2}}(x)$ is an open subset such that $\sec(U') \subseteq \sec(U)\setminus \{H\}$. Since $\sec(U)$ is finite, $\sec(U) \setminus \sec(x)$ is finite. We can therefore repeat this process finitely many times until we find an open ball $B$ such that $\sec(B) = \supp(x)$.

The second assertion is a consequence of the first: Let $X$ be compact, and for $x \in X$ let $U_x$ denote an open subset such that $\sec(U_x) = \supp(x)$.
Then
$$\sec(X) \subseteq \bigcup_{x \in X} \sec(U_x)
$$ as
$X \subseteq \bigcup_{x \in X} U_x$. But the $U_x$ are open and $X$ is compact, therefore there exists a finite set $\{x_1, \dots, x_n\} \subset X$, $n \in \NN$, such that
$$
X \subseteq \bigcup_{i=1}^n U_{x_i}.
$$
Consequently, if $H\cap X \neq \emptyset$, then there exists an index $i$ such that $H \cap U_{x_i} \neq \emptyset$ and $H \in \sec(U_{x_i})$. Hence
$$\sec(X) \subseteq \bigcup_{i=1}^n \sec(U_{x_i})
$$
and $\sec(X)$ is finite.
\end{proof}

\subsection{Subarrangements}

We want to say more about hyperplane arrangements arising as $\supp(x)$ for points $x \in \ol{T}$.

\begin{defi}
Let $(\AAA,R)$ be a hyperplane arrangement and let $x \in \ol{T}$. If $\AAA' \subset \AAA$, we call $(\AAA', T)$ a \textit{subarrangement of $(\AAA,T)$}.

Define $\AAA_x := \supp(x)$, we call $\AAA_x$ the \textit{induced arrangement at $x$} or the \textit{parabolic subarrangement at $x$}. A \textit{parabolic subarrangement of $(\AAA,T)$} is a subarrangement $(\AAA',T)$ of $(\AAA,T)$, such that $\AAA' = \AAA_x$ for some $x \in \ol{T}$.

Furthermore set $\KKK_x := \{K \in \KKK\ |\ x \in \overline{K}\}$.
\end{defi}

\begin{defi}
Let $(V,d)$ be a connected metric space. Define the segment between $x,y \in V$ to be 
$$
\sigma(x,y) := \{z \in V \mid d(x,z) + d(z,y) = d(x,y)\}.
$$

When referring to the metric of $\RR^r$, we will use the more common notion of the \textit{interval} between two points $x$ and $y$:
\begin{align*}
[x,y] &:= \sigma(x,y) = \{\lambda x + (1 - \lambda) y \in \RR^r \mid \lambda \in [0,1] \},\\
(x,y) &:= \sigma(x,y)\setminus \{x,y\} = \{\lambda x + (1 - \lambda) y \in \RR^r \mid \lambda \in (0,1) \}.
\end{align*}
Note that the intervals $(x,y]$, $[x,y)$ can be defined analogously.
\end{defi}

\begin{remdef}
Note that every hyperplane $H$ separates $V$ into half-spaces. One way to describe the half-spaces uses linear forms.

Choose an arbitrary linear form $\alpha \in V^\ast$ such that $\alpha^\perp = H$. Then $\alpha^+$ and $\alpha^-$ ($\ol{\alpha^+}$ and $\ol{\alpha^-}$) are the two \textit{open (closed) half-spaces bounded by $H$}.

For an arbitrary subset $X \subset T$, if $X$ is contained in one open half space bounded by $H$, we denote this particular half-space by $D_H(X)$. In this case we write $-D_H(X)$ for the unique half-space not containing $X$. By definition every chamber $K$ is contained in a unique half space of $H \in \AAA$, therefore the sets $D_H(K)$ exist for all $H \in \AAA$, $K \in K$. Let $K,L$ be chambers, we say that $H \in \AAA$ \textit{separates} $K$ and $L$, if $D_H(K) = -D_H(L)$. We also say that two closed chambers $\ol{K},\ol{L} \in \Cham(\SSSS)$ are \textit{separated} by $H \in \AAA$, if $H$ separates $K$ and $L$.

We denote by $S(K,L) := \{H \in \AAA \mid D_H(K) \neq D_H(L)\}$ the set of hyperplanes separating $K,L$.
\end{remdef}

\begin{lem}
If $K, L \in \KKK_x$, $H \in S(K,L)$, then $H \in \AAA_x$.
\label{starsep}
\end{lem}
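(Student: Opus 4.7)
The plan is to unwind the definitions and exploit the fact that closed half-spaces are closed sets. Since $H \in S(K,L)$, the hyperplane $H$ separates $K$ and $L$, meaning (by the definition/remark just above) that $D_H(K) = -D_H(L)$. Pick a linear form $\alpha \in V^\ast$ with $H = \ker \alpha$, normalised so that $D_H(K) = \alpha^+$ and $D_H(L) = \alpha^-$.

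Then $K \subset \alpha^+ = \{v \in V \mid \alpha(v) > 0\}$, and since the closed half-space $\overline{\alpha^+} = \{v \in V \mid \alpha(v) \ge 0\}$ is closed, we get $\overline{K} \subset \overline{\alpha^+}$. Analogously $\overline{L} \subset \overline{\alpha^-}$. The hypothesis $K,L \in \KKK_x$ means $x \in \overline{K} \cap \overline{L}$, so $\alpha(x) \ge 0$ and $\alpha(x) \le 0$ simultaneously, forcing $\alpha(x) = 0$, i.e.\ $x \in H$. Hence $H \in \supp(x) = \AAA_x$.

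There is no real obstacle here: the statement is essentially a tautology once one observes that the closed half-space containing a chamber also contains every point in its closure. The only small thing to be careful about is the convention on $D_H(\cdot)$ and the sign of $\alpha$, but no case distinction is actually needed — the argument is symmetric in $K$ and $L$ and depends only on $x$ lying in both closures.
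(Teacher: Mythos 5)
Your proof is correct and is essentially the paper's argument run in the direct rather than the contrapositive direction: the paper assumes $H \notin \AAA_x$ and concludes $D_H(K) = D_H(\{x\}) = D_H(L)$, while you conclude $\alpha(x)=0$ from $x \in \overline{K} \cap \overline{L} \subset \overline{\alpha^+} \cap \overline{\alpha^-}$. Both rest on the same observation that half-space containment of a chamber passes to its closure, so there is nothing to add.
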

\begin{proof}
Assume $H \notin \AAA_x$, then the half-space $D_H(\{x\})$ is well defined and unique. As a consequence we have $D_{H}(K) = D_{H}(\{x\}) = D_{H}(L)$, and $H$ does not separate $K$ and $L$.
\end{proof}

\begin{lem}
The pair $(\AAA_x, T)$ is a hyperplane arrangement with chambers corresponding to $\KKK_x$.
\label{lem:subarr}
\end{lem}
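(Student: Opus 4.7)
The two axioms of a hyperplane arrangement for $(\AAA_x, T)$ are inherited from $(\AAA, T)$: since $\AAA_x \subseteq \AAA$, local finiteness in $T$ carries over, and every $H \in \AAA_x$ still meets $T$. The substantive content is the chamber correspondence.

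My plan is to define a natural map $\Phi \colon \KKK_x \to \KKK(\AAA_x, T)$ by sending $K \in \KKK_x$ to the unique chamber of $(\AAA_x, T)$ containing $K$, and to verify it is a bijection. The map is well-defined because each $K \in \KKK$ is connected and disjoint from $\bigcup_{H \in \AAA} H$, hence a fortiori from $\bigcup_{H \in \AAA_x} H$. Injectivity follows immediately from Lemma \ref{starsep}: any two distinct $K, L \in \KKK_x$ are separated by some $H \in \AAA$, and that lemma forces $H \in \AAA_x$, so $\Phi(K) \neq \Phi(L)$.

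For surjectivity I fix $K' \in \KKK(\AAA_x, T)$, choose $y \in K'$, and parametrize the segment as $z_t := x + t(y - x)$, $t \in [0, 1]$. Every $H \in \AAA_x$ contains $x$ but not $y$, so $H \cap [x, y] = \{x\}$; hence $(x, y]$ is connected, lies in $T$, avoids $\bigcup_{H \in \AAA_x} H$ and meets $K'$ at $y$, which forces $(x, y] \subseteq K'$. When $x \in T$, Lemma \ref{compfin} yields a neighborhood $U$ of $x$ with $\sec(U) = \AAA_x$; for $t > 0$ small enough that $z_t \in U$, the point $z_t$ avoids every hyperplane of $\AAA$, so it lies in a single chamber $K \in \KKK$. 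Letting $t \to 0^+$, $z_t$ remains in $K$ and converges to $x$, so $x \in \ol{K}$; and $z_t \in K \cap K'$ forces $K \subseteq K'$. Thus $\Phi(K) = K'$.

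The main obstacle is the case $x \in \ol{T} \setminus T$, where local finiteness of $\AAA$ at $x$ is not guaranteed and hyperplanes of $\AAA \setminus \AAA_x$ may accumulate near $x$. The inclusion $(x, y] \subseteq K'$ still holds, but to produce a chamber $K \in \KKK_x$ contained in $K'$ one must analyze more carefully how the chambers of $\AAA$ traversed by the segment behave as $t \to 0^+$ and which of them have $x$ in their $V$-closure, rather than simply invoking local finiteness at $x$.
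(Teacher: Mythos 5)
Your verification of the arrangement axioms and your injectivity argument are exactly the paper's: the paper also observes that two distinct members of $\KKK_x$ lying in the same component of $T\setminus\bigcup_{H\in\AAA_x}H$ would be separated by some $H\in\AAA$, which Lemma \ref{starsep} forces into $\AAA_x$. For surjectivity the paper offers only the word ``Likewise'', so on the interior case you have in fact supplied more than the source does: the observation that $H\cap[x,y]=\{x\}$ for every $H\in\AAA_x$ (using linearity of $H$), hence $(x,y]\subseteq K'$, combined with Lemma \ref{compfin} to locate a full chamber $K\in\KKK_x$ inside $K'$, is correct and complete when $x\in T$.

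The gap you flag at the end is, however, genuine, and it cannot be closed by a more careful analysis of the segment as $t\to 0^+$: for boundary points the asserted correspondence can simply fail. In Example \ref{exm:thinf} take $x=(1,0)\in\ol{T}\setminus T$. No line $L_n$ or $L_n'$ contains $x$, so $\AAA_x=\emptyset$ and $(\AAA_x,T)$ has the single chamber $T$; but the slopes of the walls accumulate at $0$, so every chamber of $\AAA$ has closure of the form $\{(a,b)\mid s_1a\le b\le s_2a\}$ with $s_1>0$, and none of these closures contains $(1,0)$. Hence $\KKK_x=\emptyset$ while $\KKK(\AAA_x,T)\neq\emptyset$, and no bijection exists. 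So the missing case is not a technicality you failed to push through; the statement, read literally for arbitrary $x\in\ol{T}$, needs an extra hypothesis such as $\KKK_x\neq\emptyset$ (automatic for $x\in T$, and satisfied by the boundary points the paper actually uses later, namely points lying in some simplex of $\SSSS$). Under that hypothesis one can repair surjectivity by combining your segment argument with a gallery argument connecting the component containing a given $K_0\in\KKK_x$ to an arbitrary $K'$; as written, your proposal proves the lemma only for $x\in T$.
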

\begin{proof}
The set $\AAA_x$ is locally finite in $T$ since $\AAA$ is locally finite in $T$, and every $H \in \AAA_x$ meets $T$. Let $\KKK'$ denote the connected components of $T \setminus \bigcup_{H \in \AAA_x} H$. 

Assume $K_1, K_2 \in \KKK_x$ are both contained in $K \in \KKK'$. Then there exists a hyperplane $H \in \AAA$ such that $K_1$, $K_2$ are contained in different half-spaces with respect to $H$, in contradiction to Lemma \ref{starsep}. Since $\AAA_x \subset \AAA$, every chamber in $\KKK_x$ is therefore contained in a unique chamber in $\KKK'$.

Likewise, every chamber in $K'$ contains a chamber in $K_x$, which completes the proof.
\end{proof}

\subsection{Reductions}

We note that it is always possible to mod out the radical of an arrangement to obtain a non-degenerate one.

\begin{defi}
Assume that $(\AAA,T)$ is a hyperplane arrangement, set $W = \Rad(\AAA)$ and
\begin{enumerate}[label=\roman*)]
  \item $V^{\red} := V / W$,
	\item $\pi: V \mapsto V^{\red}, v \to v + W$,
	\item $T^{\red} := \pi(T)$,
	\item $\AAA^{\red} = \pi(\AAA)$.
\end{enumerate}
\end{defi}

Note that the set $\AAA_x$ is clearly finite if $x \in T$.

\begin{lem}
The set $T^{\red}$ is an open convex cone in $V^{\red}$.
\end{lem}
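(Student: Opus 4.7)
The plan is to verify the three defining properties of an open convex cone for $T^{\red}$ separately, each following essentially from a corresponding property of $T$ together with the fact that $\pi\colon V\to V^{\red}$ is a continuous, linear, and open surjection.

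First I would handle the cone property: given $\bar v\in T^{\red}$, pick a lift $v\in T$ with $\pi(v)=\bar v$; for $\lambda\in\RR_{>0}$, the vector $\lambda v$ lies in $T$ because $T$ is a cone, so $\lambda\bar v=\pi(\lambda v)\in T^{\red}$. Convexity is analogous: given $\bar v_1,\bar v_2\in T^{\red}$ with lifts $v_1,v_2\in T$ and $\lambda\in[0,1]$, the combination $\lambda v_1+(1-\lambda)v_2$ lies in $T$ by convexity of $T$, and its $\pi$-image equals $\lambda\bar v_1+(1-\lambda)\bar v_2$. Both steps use only that $\pi$ is $\RR$-linear, so there is nothing delicate here.

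The only step that requires a short argument is openness of $T^{\red}$ in $V^{\red}$. I would use that the canonical surjection $\pi\colon V\to V/W$ is an open map: for any open $U\subset V$, the preimage $\pi^{-1}(\pi(U))=U+W=\bigcup_{w\in W}(U+w)$ is a union of translates of $U$, hence open in $V$, and by definition of the quotient topology $\pi(U)$ is then open in $V^{\red}$. Applying this to $U=T$ yields that $T^{\red}=\pi(T)$ is open.

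The main (and essentially only) obstacle is remembering to invoke openness of the quotient map rather than trying to produce explicit neighborhoods by hand; beyond that, the statement is a straightforward transport of structure along the linear surjection $\pi$, and no properties of $\AAA$ beyond the definition of $W=\Rad(\AAA)$ enter the argument.
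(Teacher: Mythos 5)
Your proposal is correct and follows the same route as the paper: the cone and convexity properties are transported along the linear surjection $\pi$ by lifting points to $T$, and openness of $T^{\red}$ follows from openness of the quotient map. You in fact supply slightly more detail than the paper by justifying why $\pi$ is open via $\pi^{-1}(\pi(U))=U+W$.
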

\begin{proof}
The set $\pi(T)$ is open as the image of an open set and $\pi$ is open. Furthermore, if $[y,z]$ is an interval in $\pi(T)$, then there exist $y', z' \in T$ with $y = y' +W$, $z = z' + W$. The interval $[y',z']$ is contained in $T$ as $T$ is convex, and we find $\pi([y',z']) = [y,z]$.

Finally, $T^{\red}$ is a cone, since for $y \in T^{\red}$ we find $y' \in T$ with $y = y' +W_x$. Since $T$ is a cone, for every $\lambda > 0$ we have $\lambda y' \in T$, so $\lambda y \in T^{\red}$.
\end{proof}

We gather basic properties of $\AAA^{\red}$.

\begin{lem}
The pair $(\AAA^{\red}, T^{\red})$ is a non-degenerate hyperplane arrangement with chambers $\{\pi(K) \mid K \in \KKK\}$. The chambers are furthermore in one to one correspondence to $\KKK$.
\label{lem:red}
\end{lem}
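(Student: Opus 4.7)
The plan hinges on one key observation: since $W = \Rad(\AAA) = \bigcap_{H \in \AAA} H$ is contained in every $H \in \AAA$, we have $\pi^{-1}(\pi(H)) = H + W = H$. Consequently, $H \mapsto \pi(H)$ is injective, and for all $x \in V$, $x \in H \iff \pi(x) \in \pi(H)$. From this I would verify that $(\AAA^{\red},T^{\red})$ satisfies the three requirements of a hyperplane arrangement: (i) each $\pi(H) = H/W$ is a hyperplane in $V^{\red}$ because $\dim V^{\red} - \dim \pi(H) = \dim V - \dim H = 1$; (ii) $\pi(H) \cap T^{\red} \neq \emptyset$ via the image of any point of $H \cap T$; (iii) local finiteness transfers from $\AAA$ in $T$ to $\AAA^{\red}$ in $T^{\red}$, since if $y = \pi(x) \in T^{\red}$ has a neighborhood $U_x \subset T$ with $\sec_{\AAA}(U_x)$ finite, then $\pi(U_x)$ is an open neighborhood of $y$ in $T^{\red}$ (as $\pi$ is open), and each $\pi(H)$ meeting $\pi(U_x)$ comes from an $H$ in the finite set $\sec_{\AAA}(U_x)$, with $H \mapsto \pi(H)$ injective.

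Non-degeneracy follows from
\[
\pi^{-1}(\Rad(\AAA^{\red})) \;=\; \bigcap_{H \in \AAA} \pi^{-1}(\pi(H)) \;=\; \bigcap_{H \in \AAA} H \;=\; W,
\]
giving $\Rad(\AAA^{\red}) = \pi(W) = \{0\}$. Setting $U := T \setminus \bigcup_{H \in \AAA} H$ and $U^{\red} := T^{\red} \setminus \bigcup_{H' \in \AAA^{\red}} H'$, the key observation also yields $\pi(U) = U^{\red}$: for $y \in U^{\red}$ and any lift $x \in T$ with $\pi(x) = y$, one has $x \notin H$ for every $H \in \AAA$ (else $y \in \pi(H)$), so $x \in U$.

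For each $K \in \KKK$, the image $\pi(K)$ is open (as $\pi$ is an open map), connected, and contained in $U^{\red}$, so it lies inside a unique chamber $\Phi(K) \in \KKK(\AAA^{\red}, T^{\red})$. To upgrade this to a bijection with $\pi(K) = \Phi(K)$, I would use two facts. First, if $K_1 \neq K_2$ in $\KKK$ are separated by some $H = \ker\alpha$ in $\AAA$, then because $W \subset H$ the form $\alpha$ descends to $\bar\alpha \in (V^{\red})^*$ with $\pi(H) = \ker\bar\alpha$, and this $\pi(H)$ separates $\pi(K_1)$ from $\pi(K_2)$; in particular they lie in different chambers of $\AAA^{\red}$. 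Second, given $K' \in \KKK(\AAA^{\red}, T^{\red})$, each $y \in K'$ admits a lift in $U$ (by $\pi(U) = U^{\red}$), whose $\AAA$-chamber $K$ satisfies $\pi(K) \subset K'$. Combining these, $K'$ is the union of the \emph{disjoint} nonempty open sets $\pi(K)$ ranging over those $K$ with $\pi(K) \subset K'$; connectedness of $K'$ then forces a single summand, yielding $\pi(K) = K'$ for a unique $K$. The main obstacle is precisely this final step — ruling out two chambers of $\AAA$ from collapsing to one in $\AAA^{\red}$ — which is nontrivial because $T$ need not be $W$-invariant, so $\pi|_T$ is not a quotient in any simple geometric sense; the argument works only because every separating linear form automatically descends to $V^{\red}$.
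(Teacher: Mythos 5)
Your proof is correct and follows essentially the same route as the paper: the whole argument rests on $W\subset H$ for every $H\in\AAA$, giving $\pi^{-1}(\pi(H))=H$ and hence the transfer of local finiteness, which is exactly the paper's key step. You are in fact more careful than the paper in two places — you verify non-degeneracy explicitly, and you justify the chamber bijection (via descent of separating linear forms plus connectedness) where the paper merely asserts that $\pi(K)$ and $\pi^{-1}(K')$ are connected components.
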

\begin{proof}
The set $\AAA^{\red}$ is a set of hyperplanes in $V^{\red}$, since $W \subset H$ for all $H \in \AAA$.

Let $H' \in \AAA^{\red}$ with $H' = \pi(H)$ for $H \in \AAA$, then $H' \cap T^{\red} \neq \emptyset$, since $H \cap T \neq \emptyset$.

We show that $\AAA^{\red}$ is locally finite in $T^{\red}$. Let $y \in \pi(T)$, then there exists an $y' \in T$ such that $y = y' + W$. Since $\AAA$ is locally finite in $T$, there exists a neighborhood $U \subset T$ containing $y'$ such that $\{H \in \AAA \mid H \cap U \neq \emptyset\}$ is finite. As $\pi$ maps open sets to open sets, $\pi(U)$ is a neighborhood of $y$ in $T^{\red}$. Now let $z \in H \cap \pi(U)$ for some $H \in \AAA_x^\pi$. Let $H' \in \AAA^{\red}$ with $\pi(H) = H'$ and assume $z \in H' \cap \pi(U)$. Then there exists $z' \in H$ with $\pi(z') = z$ and $z'' \in U$ with $\pi(z'') = z$, thus $z' +w = z''$ for some $w \in W$. Hence $z'' \in H$ as well, and we can conclude that $H \cap U \neq \emptyset$.

We have thus established for $H \in \AAA$
$$
H \cap U \neq \emptyset \Leftrightarrow \pi(H) \cap \pi(U) \neq \emptyset.
$$
Therefore $\{H \in \AAA_x^\pi \mid H \cap \pi(U)\}$ is finite.

For the chambers, note that $\pi(K)$ is a connected component of $T^{\red} \setminus \bigcup_{H \in \AAA^{\red}} H$, in particular, as $K \subset T$, $\pi(K) \subset T^{\red}$, furthermore for a connected component $K'$ of $T^{\red} \setminus \bigcup_{H \in \AAA^{\red}} H$, $\pi^{-1}(K')$ is a connected component of $T \setminus \bigcup_{H \in \AAA} H$. This completes the proof.
\end{proof}

\begin{defi}
Let $(\AAA,T)$ be a hyperplane arrangement, $x \in \ol{T}$. Then set 
	\item $W_x = \bigcap_{H \in \AAA_x} H$,
	\item $V_x = V/W_x$,
	\item $\pi = \pi_x: V \mapsto V_x, v \to v + W_x$,
	\item $T_x = \pi(T)$,
	\item $\AAA_x^\pi = \pi(\AAA_x)$.
\end{defi}

\begin{fol}
The pair $(\AAA_x^\pi, T_x)$ is a non-degenerate hyperplane arrangement with chambers $\{\pi(K) \mid K \in \KKK_x\}$.
\label{cor:redsub}
\end{fol}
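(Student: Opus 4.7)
The plan is to obtain the corollary by chaining Lemma \ref{lem:subarr} with Lemma \ref{lem:red}, noting that the two constructions defined before Corollary \ref{cor:redsub} are precisely the composition of passing to a parabolic subarrangement and then modding out the radical.

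More precisely, first I would invoke Lemma \ref{lem:subarr} to conclude that $(\AAA_x, T)$ is itself a hyperplane arrangement, whose set of chambers is (in bijection with) $\KKK_x$. Then I would observe that by its very definition,
\[
W_x = \bigcap_{H \in \AAA_x} H = \Rad(\AAA_x),
\]
so the projection $\pi_x \colon V \to V/W_x$ is exactly the reduction map associated with the arrangement $(\AAA_x, T)$ in the sense of the definition preceding Lemma \ref{lem:red}. In particular $V_x = (V)^{\red}$, $T_x = T^{\red}$, and $\AAA_x^\pi = (\AAA_x)^{\red}$ when the reduction is taken relative to $\AAA_x$.

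Now I would apply Lemma \ref{lem:red} to the hyperplane arrangement $(\AAA_x, T)$. That lemma yields that $((\AAA_x)^{\red}, T^{\red})$ is a non-degenerate hyperplane arrangement whose chambers are exactly $\{\pi_x(K) \mid K \in \KKK_x\}$, in one-to-one correspondence with $\KKK_x$. Translating back into the notation of the definition, this is precisely the statement that $(\AAA_x^\pi, T_x)$ is a non-degenerate hyperplane arrangement with chambers $\{\pi(K) \mid K \in \KKK_x\}$, which is what had to be shown.

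There is no real obstacle here, since both Lemma \ref{lem:subarr} and Lemma \ref{lem:red} do all of the work; the only point deserving a line of justification is the identification $W_x = \Rad(\AAA_x)$, which is immediate from the definitions, and the corresponding identification of $\pi_x$ with the reduction map so that Lemma \ref{lem:red} is directly applicable.
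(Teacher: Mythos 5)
Your proposal is correct and follows exactly the paper's own argument: the paper also deduces the corollary from Lemma \ref{lem:subarr} and Lemma \ref{lem:red} via the observation that $W_x = \Rad(\AAA_x)$. You have merely spelled out the identification of $\pi_x$ with the reduction map in more detail, which is harmless.
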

\begin{proof}
This follows from Lemma \ref{lem:subarr} and Lemma \ref{lem:red}, since $W_x = \Rad(\AAA_x)$.
\end{proof}

\begin{exmp}
The hyperplane arrangement in Example \ref{exm:notthin} is degenerate, as $\Rad(\AAA) = L$. Reducing this arrangement yields the non degenerate and thin hyperplane arrangement $(\{0\}, \RRR)$.
\label{exm:deg}
\end{exmp}

\subsection{The chamber graph and gated parabolics}

We now consider the structure given by the chambers and their adjacency.

\begin{defi}
Let $(\AAA,T)$ be a hyperplane arrangement. We call two chambers $K, L$ \textit{adjacent}, if $\langle \ol{K} \cap \ol{L} \rangle$ is a hyperplane. If $K,L$ are are adjacent and $\langle \ol{K} \cap \ol{L} \rangle = H$, then we also call $K,L$ to be \textit{adjacent by $H$}.

Define the \textit{chamber graph} $\Gamma = \Gamma(\AAA,T)$ to be the simplicial graph with vertex set $\KKK$, $\{K,L\}$ is an edge if and only if $K$ and $L$ are adjacent.

We call a path in $\Gamma$ connecting $K,L \in \KKK$ a \textit{gallery from $K$ to $L$}. We say a gallery from $K$ to $L$ is \textit{minimal}, if it is of length $d_\Gamma(K,L)$, where $d_\Gamma$ is the distance in $\Gamma$.
\end{defi}

\begin{lem}
Let $K, L \in \KKK$, $v \in K$, $w \in L$. Then $S(K,L) = \sec([v,w])$, and $|S(K,L)|$ is finite.
\label{finsep}
\end{lem}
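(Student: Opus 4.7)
The plan is to prove the equality $S(K,L) = \sec([v,w])$ by double inclusion and then deduce finiteness from Lemma \ref{compfin} applied to the compact segment $[v,w]$. Throughout, I will use two basic facts without further comment: first, since $K$ (respectively $L$) is a connected component of $T \setminus \bigcup_{H \in \AAA} H$, the points $v$ and $w$ lie on no hyperplane of $\AAA$; second, since $T$ is convex and contains $v$ and $w$, the entire segment $[v,w]$ is contained in $T$.

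For the inclusion $S(K,L) \subseteq \sec([v,w])$, I would take $H \in S(K,L)$ and pick a linear form $\alpha$ with $\ker \alpha = H$. Then $\alpha(v)$ and $\alpha(w)$ have opposite signs because $v \in K \subset D_H(K)$ and $w \in L \subset D_H(L) = -D_H(K)$. By the intermediate value theorem applied to $t \mapsto \alpha((1-t)v + tw)$, there is some point of $[v,w]$ on which $\alpha$ vanishes, hence $H \cap [v,w] \neq \emptyset$ and $H \in \sec([v,w])$.

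For the reverse inclusion $\sec([v,w]) \subseteq S(K,L)$, let $H \in \sec([v,w])$ and choose $\alpha \in V^\ast$ with $\ker \alpha = H$. Since $v, w \notin H$, we have $\alpha(v), \alpha(w) \neq 0$. If $\alpha(v)$ and $\alpha(w)$ had the same sign, then $\alpha$ would be nonzero on all of $[v,w]$, contradicting $H \cap [v,w] \neq \emptyset$. Therefore $v$ and $w$ lie in different open half-spaces bounded by $H$, so $D_H(K) = D_H(\{v\}) \neq D_H(\{w\}) = D_H(L)$, and $H \in S(K,L)$.

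Finally, for finiteness, the segment $[v,w]$ is a compact subset of $T$, so Lemma \ref{compfin} directly gives that $\sec([v,w])$ is finite, and therefore so is $S(K,L)$. I do not expect any serious obstacle: the only thing to watch out for is keeping track of the fact that $v$ and $w$ avoid all hyperplanes of $\AAA$ (so that the half-space notation $D_H(\{v\})$ is unambiguous) and ensuring that $[v,w]$ actually lies in $T$, both of which follow at once from the definitions.
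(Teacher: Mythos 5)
Your proposal is correct and follows essentially the same route as the paper: both inclusions are obtained from the linearity of a defining form $\alpha$ on the segment (your intermediate value argument for $S(K,L)\subseteq\sec([v,w])$ is just the contrapositive of the paper's observation that a segment missing $H$ lies in one half-space, and your same-sign argument for the reverse inclusion is the paper's convexity argument), and finiteness comes from Lemma \ref{compfin} applied to the compact segment $[v,w]\subset T$ exactly as in the paper.
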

\begin{proof}
As $T$ is convex, the line $[v,w]$ is contained in $T$, and is compact. Therefore, as $\AAA$ is locally finite, by Lemma \ref{compfin} $\sec([v,w])$ is finite.

Let $H \in \AAA$ and assume $H \cap [v,w]$ is empty, then $D_H([v,w])$ is well-defined and $D_H(K) = D_H([v,w]) = D_H(L)$, so $H$ does not separate $K$ and $L$. We can conclude $S(K,L) \subseteq \sec([v,w])$. On the other hand, let $H \in \sec([v,w])$. Since $v \in K$, $w \in L$, the interval $[v,w]$ is not contained in $H$ and neither $v$ nor $w$ is in $H$. Assume $D_H(\{v\}) = D_H(\{w\})$, since $D_H(\{v\})$ is convex, we obtain $[v,w] \subset D_H(\{v\})$, contradicting our assumption. Therefore $D_H(K) = D_H(\{v\}) = -D_H(\{w\}) = -D_H(L)$. This yields $\sec([v,w]) \subseteq S(K,L)$.
\end{proof}

\begin{lem}
Two chambers $K, L \in \KKK$ are adjacent by $H$ if and only if $S(K,L) = \{\langle H \rangle\}$. In this situation $L$ is the unique chamber adjacent to $K$ by $H$.
\label{adjsep}
\end{lem}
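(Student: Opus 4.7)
The plan is to reduce both directions to a single application of Lemma \ref{finsep} to a short segment through a well-chosen point of $H$. A preliminary observation used throughout is that every chamber is a convex cone with apex at $0$: for $v \in K$ the ray $\RR_{>0}v$ stays in $T$, avoids every linear hyperplane, and is connected, hence lies in $K$; convexity follows from the analogous argument applied to a segment joining two points of $K$. In particular $0 \in \ol K \cap \ol L$, so the convex set $\ol K \cap \ol L$ has affine hull equal to its linear span.

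For the forward direction, assume $\langle \ol K \cap \ol L \rangle = H$. Then the affine hull of the convex set $\ol K \cap \ol L \subseteq H$ is $H$, so it has non-empty relative interior $R$ in $H$. Picking $q \in R$, Lemma \ref{compfin} yields a small open ball around $q$ on which only $\supp(q)$ appears; since the finitely many hyperplanes in $\supp(q) \setminus \{H\}$ intersect $H$ in proper subspaces, they cannot cover a small neighborhood of $q$ in $R$, and I can select $p$ in this neighborhood with $\supp(p) = \{H\}$. Using Lemma \ref{compfin} again, pick a convex open $U \ni p$ with $\sec(U) = \{H\}$. Since only $H$ cuts $U$, the two connected components of $U \setminus H$ each sit in a single chamber; as $p \in \ol K \cap \ol L$ and $K \neq L$, these components are portions of $K$ and $L$ on opposite sides of $H$. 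For any $v \in K \cap U$ and $w \in L \cap U$, the segment $[v,w]$ stays in $U$, so Lemma \ref{finsep} gives $S(K,L) = \sec([v,w]) = \{H\}$.

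For the converse, assume $S(K,L) = \{H\}$ and pick $v \in K$, $w \in L$. By Lemma \ref{finsep} the segment $[v,w]$ meets $H$ at a single crossing point $z$ and no other hyperplane, so $\supp(z) = \{H\}$. The connected half-open segments $[v,z)$ and $(z,w]$ avoid all hyperplanes and therefore lie in $K$ and $L$, whence $z \in \ol K \cap \ol L$. A convex open neighborhood $U$ of $z$ with $\sec(U) = \{H\}$ (again from Lemma \ref{compfin}) has its two open sides of $H$ contained in $K$ and $L$, so $U \cap H \subseteq \ol K \cap \ol L$; since $U \cap H$ is a non-empty open subset of $H$ it spans $H$, giving $\langle \ol K \cap \ol L \rangle \supseteq H$. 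The reverse inclusion is immediate: now that $K,L$ occupy opposite open half-spaces of $H$, any point off $H$ has a full neighborhood in one of them and hence cannot be in the closure of the chamber on the other side.

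For the uniqueness statement, if $L'$ is also adjacent to $K$ by $H$ then $S(K,L') = \{H\}$, so sign comparison on every hyperplane forces $D_{H'}(L) = D_{H'}(L')$ for all $H' \in \AAA$, i.e.\ $S(L,L') = \emptyset$; a segment from $L$ to $L'$ then meets no hyperplane and lies in a single chamber, forcing $L = L'$. The main obstacle is the forward direction's construction of the generic point $p$, where one must combine the convex-cone structure of $\ol K \cap \ol L$ (to obtain a non-empty relative interior in $H$) with the local finiteness of $\AAA$ (to avoid auxiliary hyperplanes through $p$).
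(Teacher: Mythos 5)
Your proof is correct and follows essentially the same route as the paper's: both directions rest on locating a point of $\ol{K}\cap\ol{L}$ (resp.\ of the segment $[v,w]$) whose support is exactly $\{H\}$, taking a small neighborhood meeting only $H$, and identifying its two components with $K$ and $L$. The only cosmetic difference is that you conclude the forward direction by applying Lemma \ref{finsep} to a short segment inside that neighborhood, whereas the paper argues directly that any separating hyperplane must contain $\ol{K}\cap\ol{L}$ and hence equal $H$; both arguments, like the paper's, leave implicit the easy verification that $H$ itself lies in $\AAA$.
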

\begin{proof}
Assume first that $K,L$ are adjacent by $H$. So $H = \langle \ol{K} \cap \ol{L} \rangle$. Note that $H \in W^K \cap W^L$ by definition. Since $H \in W^K$, the set $F := \ol{K}\cap \ol{L}$ is a convex cone spanning $H$. Take a point $x$ in the interior of $F \subset H$. For a suitable $\varepsilon >0$ we find $U_\varepsilon(x) \cap H \subset F$ and $\sec(U_\varepsilon(x)) = \{H\}$. The hyperplane $H$ separates $U_\varepsilon(x)$ into two connected components, say $U_+$ and $U_-$. Since $K$ is open, $F \subset \ol{K}$, w.\ l.\ o.\ g.\ $K$ meets $U_+$. Since $\sec(U_\varepsilon(x)) = \{H\}$, indeed we find $U_+ \subset K$. Assuming that $L$ meets $U_+$ implies that $L = K$, but then $K \cap L$ generates $V$, contradicting the assumption of $K$ and $L$ being adjacent. Thus $U_- \subset L$, and $H$ separates $K$ and $L$.

Assume $H' \in \AAA$ such that $H'$ separates $K$ and $L$. Then $\ol{K}\subset \ol{(D_{H'}(K))}$ and $\ol{L} \subset \ol{(D_{H'}(L))}$, in particular $\ol{K} \cap \ol{L} \subset H'$. This implies $H' = H$, therefore $S(K,L) = \{H\}$.

For the other implication assume that $H$ is the unique hyperplane separating $K$ and $L$. In this case, note that $K$ and $L$ are distinct. If $H$ is not in $W^K$, $K$ and $L$ are on the same side of every wall of $K$, thus equal, a contradiction. Therefore $H \in W^K \cap W^L$.

Consider $v \in K$ and $w \in L$. The interval $[v,w]$ meets only $H$ by Lemma \ref{finsep}. Therefore $[v,w] \cap H = \{p\}$, with $p \in \ol{K} \cap \ol{L}$. Since $\AAA$ is locally finite, by Lemma \ref{compfin} we find a neighborhood $U$ of $p$ such that $\sec(U) = \{H\}$. Again $U$ is separated by $H$ into two connected components, one contained in $K$, one in $L$. Therefore $U \cap H$ is contained in $\ol{K} \cap \ol{L}$ and this set generates $H$. Thus $K$ and $L$ are adjacent.

For the second statement assume $L'$ is a chamber adjacent to $K$ by $H$. Then we established $S(K,L') = \{H\}$. If $L \neq L'$, there exists a hyperplane $H'$ separating $L$ and $L'$, and we can assume $D_{H'}(K) = D_{H'}(L)$ and $D_{H'}(K) = - D_{H'}(L')$. Then $H'$ separates $K$ and $L'$, which implies $H' = H$. This contradicts the fact that $D_{H'}(K) = D_{H'}(L)$.
\end{proof}

\begin{lem}
Let $K \in \KKK$, $H \in \AAA \cap W^K$. Then there exists a chamber $L$ adjacent to $K$ by $H$.
\label{lem:adjex}
\end{lem}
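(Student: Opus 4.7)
\medskip
\noindent\textbf{Proof plan.} The strategy is to locate a point $p\in H\cap\ol{K}$ at which, locally, $H$ is the only hyperplane of $\AAA$; from such a $p$ the desired neighbor $L$ will simply be the chamber on the opposite side of $H$.

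Since $H\in W^K$, the set $F:=H\cap\ol{K}$ spans $H$, so its relative interior inside $H$ is nonempty; pick $p_0$ in this relative interior. By Lemma~\ref{compfin}, there is a neighborhood $U_0\subset T$ of $p_0$ with $\sec(U_0)=\supp(p_0)$ finite. Each $H'\in\sec(U_0)\setminus\{H\}$ meets $H$ in a proper linear subspace of $H$, and a finite union of such proper subspaces cannot cover the nonempty relatively open subset $F\cap U_0$ of $H$. Hence one can choose $p\in F\cap U_0$ with $\supp(p)=\{H\}$, and by Lemma~\ref{compfin} again a neighborhood $U\subset U_0$ of $p$ with $\sec(U)=\{H\}$.

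Now $U\setminus H$ splits into two open connected components $U_+$ and $U_-$, and $\sec(U)=\{H\}$ forces each to lie in a single chamber. Since $K$ is open, connected, and disjoint from $H$ (the latter because $H\in W^K$), it sits in one open half-space of $H$; combined with $p\in\ol{K}$ this forces $K$ to meet $U$, and after relabeling $U_+\subset K$. Let $L$ be the chamber containing $U_-$; then $L\neq K$, since $L$ lies in $-D_H(K)$. For adjacency by $H$, note that $U\cap H\subset\ol{K}\cap\ol{L}$ because points of $U\cap H$ are limits of points of both $U_+\subset K$ and $U_-\subset L$, while $\ol{K}\cap\ol{L}\subset H$ follows from $\ol{K}\subset D_H(K)\cup H$ and $\ol{L}\subset(-D_H(K))\cup H$, whose intersection is $H$. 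Since $U\cap H$ is open in $H$, it spans $H$, and we conclude $\langle\ol{K}\cap\ol{L}\rangle=H$.

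The genuine obstacle is the choice of $p$: it must simultaneously lie in the relative interior of $H\cap\ol{K}$ (so that $K$ genuinely approaches $H$ at $p$) and avoid every other hyperplane of $\AAA$ locally (so that only $H$ separates the two sides near $p$). The first demand is met by $H\in W^K$, which forces $F$ to span $H$; the second reduces, via local finiteness, to avoiding finitely many proper subspaces of $H$, which a relatively open subset of $H$ cannot be covered by. Once $p$ is in hand, the remainder of the argument is routine bookkeeping with half-spaces.
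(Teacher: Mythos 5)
Your proof is correct and follows essentially the same route as the paper's: pick a point in the relative interior of $H\cap\ol{K}$ whose support is $\{H\}$, take a neighborhood $U$ with $\sec(U)=\{H\}$, and let $L$ be the chamber containing $-D_H(K)\cap U$. The paper's version is just terser; your extra care in justifying that such a point exists (a nonempty relatively open subset of $H$ cannot be covered by finitely many proper subspaces of $H$) and in verifying $\langle\ol{K}\cap\ol{L}\rangle=H$ only fills in details the paper leaves implicit.
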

\begin{proof}
Take a point $x$ in the interior of $\ol{K} \cap H$ with respect to $H$ and a neighborhood $U$ of $x$ contained in $T$ with $\sec(U) = \{H\}$. Since $H \in \AAA$, $-D_H(K)\cap U$ is contained in a chamber $L$, which is adjacent to $K$ by $H$. 
\end{proof}

In the case where $(\AAA,T)$ is not thin, we need to be able to handle hyperplanes which are not in $\AAA$ but occur as a wall of a chamber.

\begin{lem}
Assume $K \in \KKK$, $H \in W^K$ and $H \notin \AAA$. Then $T \cap H = \emptyset$. In particular for all $K, L \in \KKK$ the set $D_H(K)$ is well defined and $D_H(K) = D_H(L)$ holds.
\label{boundwalls}
\end{lem}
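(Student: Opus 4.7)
The plan is to argue by contradiction: suppose $H \cap T \neq \emptyset$ and derive a local dimensional contradiction at a suitable point of $H$. Let $F := H \cap \ol{K}$; by $H \in W^K$, $F$ is convex, closed, and spans $H$, so its relative interior $F^\circ$ in $H$ is nonempty and dense in $F$. The key reduction is to produce a point $p \in F^\circ \cap T$. First observe that $F \cap T \neq \emptyset$: otherwise $F \subset \partial T$, and choosing any $p_0 \in F^\circ$ and $y \in H \cap T$, the segment $[p_0, y]$ lies in $H$ by linearity, while $(p_0,y] \subset T$ by the standard fact that an open convex set $T$ with $y \in T$ and $p_0 \in \ol{T}$ satisfies $(p_0,y] \subset T$; yet points of $(p_0, y]$ close enough to $p_0$ lie in a small $H$-neighborhood of $p_0$ contained in $F \subset \partial T$, contradicting $(p_0, y] \subset T$. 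Density of $F^\circ$ in $F$ together with the openness of $T$ in $V$ then provides the desired $p \in F^\circ \cap T$.

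Now I apply Lemma \ref{compfin} at $p$ to choose $\varepsilon > 0$ small enough that $B := U_\varepsilon(p) \subset T$, $\sec(B) = \supp(p)$, and (using $p \in F^\circ$) $B \cap H \subset F \subset \ol{K}$. Since $H \notin \AAA$ but $\supp(p) \subset \AAA$, we have $H \notin \supp(p)$. Because $p \in \ol{K} \cap B$, $K \cap B \neq \emptyset$, and moreover
\[
K \cap B \;=\; B \cap \bigcap_{H' \in \supp(p)} D_{H'}(K),
\]
an intersection of $B$ with finitely many open half-spaces, hence convex; it is a single local chamber in $B$, whose boundary in $B$ is contained in $\bigcup_{H' \in \supp(p)} H'$. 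Combining $B \cap H \subset \ol{K} \cap B = \ol{K \cap B} \cap B$ with $B \cap H \cap K = \emptyset$ (as $H \cap K = \emptyset$), we deduce
\[
B \cap H \;\subset\; \bigcup_{H' \in \supp(p)} (H \cap H').
\]
The left-hand side is a nonempty open subset of the $(r-1)$-dimensional space $H$, while the right-hand side is a finite union of $(r-2)$-dimensional subspaces of $H$ (since $H \neq H'$ for each $H' \in \supp(p)$), which has empty interior in $H$. This contradiction forces $H \cap T = \emptyset$.

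For the addendum: $T$ is connected (being open and convex) and disjoint from $H$, so $T$ lies in a single open half-space of $H$. Hence $D_H(K)$ is defined for every $K \in \KKK$, and $D_H(K) = D_H(L)$ for any two chambers $K, L$.

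The step I expect to be the main obstacle is the reduction to $F^\circ \cap T \neq \emptyset$: a priori the wall face $F$ might sit entirely on $\partial T$ while $H$ pierces $T$ at some point far from $\ol{K}$, and the segment/convexity argument inside $H$ is what rules out this scenario and makes the subsequent local finiteness argument applicable.
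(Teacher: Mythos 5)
Your proof is correct. The paper's own proof of Lemma \ref{boundwalls} is only a two-sentence sketch: assuming $H \cap T \neq \emptyset$, it asserts that $K$ could then not be a connected component of $T \setminus \bigcup_{H' \in \AAA} H'$, and stops there -- it never addresses the scenario you flag at the end, namely that the wall-face $F = H \cap \ol{K}$ might lie entirely in $\partial T$ while $H$ pierces $T$ far from $\ol{K}$. Your argument supplies exactly what is missing: the segment argument inside $H$ (using $(p_0,y]\subset T$ for $y\in T$, $p_0\in\ol{T}$) to produce a point of $F^\circ\cap T$, and then the local analysis via Lemma \ref{compfin} ending in the dimension count that the nonempty $H$-open set $B\cap H$ cannot be covered by the finitely many codimension-two subspaces $H\cap H'$, $H'\in\supp(p)$. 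The underlying geometric idea is the same as the paper's -- near a point of the wall lying in $T$ one could cross $H$ without meeting any hyperplane of $\AAA$, contradicting that $K$ is a full component -- but your version is a rigorous proof where the paper offers an assertion; the deduction of the addendum from connectedness of $T$ matches the paper's intent.
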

\begin{proof}
Assume $T \cap H \neq \emptyset$, then $H$ separates $T$ into two half spaces. As $H$ is a wall of $K$ and $H \notin \AAA$, we find that $K$ is not a connected component of $T \setminus \bigcup_{H \in \AAA}H$. Therefore $T \cap H = \emptyset$, which proves our claim.
\end{proof}

\begin{lem}
Let $K,L \in \KKK$. Then there exists a minimal gallery of length $|S(K,L)|$ connecting $K$ and $L$.
\label{sepisdis}
\end{lem}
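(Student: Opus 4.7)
The plan is to argue by induction on $n := |S(K,L)|$, which is finite by Lemma \ref{finsep}, and then separately verify minimality. For the base case $n = 0$, I would take any $v \in K$, $w \in L$: by Lemma \ref{finsep} the segment $[v,w] \subset T$ meets no hyperplane, so $v$ and $w$ lie in the same chamber, forcing $K = L$ and giving the trivial gallery.

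For the inductive step I aim to produce a chamber $K'$ adjacent to $K$ with $S(K',L) = S(K,L) \setminus \{H\}$ for some $H \in S(K,L)$, then apply the hypothesis to $(K',L)$ and prepend $K$. To construct $K'$, I would walk along a segment from $v \in K$ to $w \in L$ and take the first chamber entered after leaving $K$. The key technical point is to ensure that the segment leaves $K$ through a single hyperplane rather than through an intersection of several; this calls for a genericity choice of $w$. Fixing $v$, for each pair of distinct $H_i, H_j \in S(K,L)$ the set of $w \in V$ for which $[v,w]$ meets $H_i \cap H_j$ lies in the affine span of $v$ and $H_i \cap H_j$, a proper subspace of $V$ (since $v$ lies in no hyperplane). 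The open set $L$ is not covered by this finite union of proper subspaces, so a suitable $w$ can be chosen.

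With $v,w$ so chosen, set $\gamma(t) := (1-t)v + tw$ and let $t^* \in (0,1)$ be the smallest parameter with $\gamma(t^*) \notin K$. The point $p := \gamma(t^*)$ then lies in $\ol{K}$ and, by genericity, is on exactly one hyperplane $H \in S(K,L)$: any other hyperplane of $\AAA$ through $p$ would lie in $\sec([v,w]) = S(K,L)$ and thus appear among the $H_j$, contradicting the choice of $w$. So $\supp(p) = \{H\}$. By Lemma \ref{compfin} I pick a neighborhood $U$ of $p$ with $\sec(U) = \{H\}$; the two components of $U \setminus H$ lie one in $K$ and one in a unique chamber $K'$. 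Since every hyperplane separating $K$ from $K'$ would have to meet a segment in $U$ between interior points of $K$ and $K'$, we get $S(K,K') = \{H\}$, so Lemma \ref{adjsep} gives that $K$ and $K'$ are adjacent by $H$. A short check using that $K, K'$ agree on the sides of every $H'' \neq H$ then shows $S(K',L) = S(K,L) \setminus \{H\}$, closing the induction.

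Minimality should be routine: for any gallery $K = K_0, \ldots, K_m = L$, Lemma \ref{adjsep} makes each $S(K_{i-1},K_i)$ a singleton, and each $H \in S(K,L)$ must occur at least once among these singletons, since the half-space of $H$ containing the current chamber flips exactly when $H$ is crossed. Distinct hyperplanes in $S(K,L)$ force distinct crossings, giving $m \geq n$ and proving minimality of the constructed gallery. The main obstacle is the genericity step for choosing $w$; without it the segment could exit $K$ through a common point of several hyperplanes, and the chamber on the far side of that point need not be adjacent to $K$.
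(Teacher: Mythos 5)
Your proof is correct and follows the same overall skeleton as the paper's: induction on $n=|S(K,L)|$ (finite by Lemma \ref{finsep}), peeling off one separating hyperplane per step, and then counting crossings to get minimality. The one genuine difference is in how the adjacent chamber $K'$ is produced. The paper asserts that $S(K,L)$ can be sorted so that $H_1\in W^K$ and then invokes Lemmas \ref{lem:adjex} and \ref{adjsep}; the claim that some separating hyperplane is a wall of $K$ lying in $\AAA$ is left implicit there (it follows from $\ol{K}=\bigcap_{H\in W^K}\ol{D_H(K)}$ together with Lemma \ref{boundwalls}, but the paper does not spell this out). You instead construct $K'$ directly by walking along a segment $[v,w]$ with $w\in L$ chosen generically so that the segment avoids the finitely many codimension-two intersections $H_i\cap H_j$ with $H_i,H_j\in S(K,L)$; combined with $\sec([v,w])=S(K,L)$ this forces the first exit point $p$ to satisfy $\supp(p)=\{H\}$, and Lemma \ref{compfin} plus Lemma \ref{adjsep} then give the adjacent chamber with $S(K,K')=\{H\}$. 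Your genericity argument is sound ($L$ is open and nonempty, hence not covered by finitely many proper linear subspaces), and it buys you a self-contained existence proof that does not pass through the wall structure of $K$; the paper's route is shorter but leans on the unproven sorting claim and on Lemma \ref{lem:adjex}. The base case and the minimality argument are essentially identical to the paper's.
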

\begin{proof}
By Lemma \ref{finsep} $S(K,L)$ is finite, so let $n=|S(K,L)|$. For $n=0$, we have $D_H(K) = D_H(L)$ for every $H \in A$ and all $H \in W^K \cup W^L$ by Lemma \ref{boundwalls}. This implies that $K$ and $L$ are the same connected component. Let $n=1$, then we obtain the statement from Lemma \ref{adjsep}.

We now use induction on $n$, let $S(K,L) = \{H_1, \dots, H_n\}$. We can sort $S(K,L)$ in a way, such that $H_1 \in W^K$. By Lemma \ref{lem:adjex} and Lemma \ref{adjsep} there exists a unique chamber $K'$ adjacent to $K$ such that $S(K,K')=\{H_1\}$. We will show $S(K',L) = \{H_2, \dots, H_n\}$, then the statement of the Lemma follows by induction. Since we have $S(K,K') = \{H_1\}$, we have $D_{H_i}(K) = D_{H_i}(K')$ for $i = 2, \dots n$. Assuming that there exists an additional $H' \in \AAA$, $H' \neq H_i$ for $i=1, \dots n$ with the property that $H'$ separates $K'$ and $L$, provides a contradiction as $H'$ also separates $K$ and $L$. By induction this yields a gallery of length $n$ from $K$ to $L$.

We show that this gallery is also minimal. For this purpose let $\ol{K}= \ol{K}_0, \ol{K}_1, \dots, \ol{K}_m = \ol{L}$ be an arbitrary gallery connecting $K,L$ and let for $i = 1, \dots, m$ the hyperplane $H_i \in \AAA$ such that $\ol{K}_{i-1} \cap \ol{K}_i \subset H_i$. Note that $H_i$ is unique with that property by Lemma \ref{adjsep}. Assume $H \in S(K,L)$, then there exists an index $0 \leq i < m$ such that $H$ separates $K_i$ and $L$ but not $K_{i+1}$ and $L$. So $D_{H}(K_i) = -D_H(L)$ and $D_H(K_{i+1}) = D_H(L)$, $H$ separates $K_{i}$ and $K_{i+1}$. We obtain $H=H_i$, as $K_i, K_{i+1}$ are adjacent by $H_i$ and $H_i$ is unique with that property by Lemma \ref{adjsep}.

Thus $S(K,L) \subset \{H_1, \dots, H_m\}$ and this yields $|S(K,L)| \leq m$. Above we constructed a gallery of length $|S(K,L)|$, hence the gallery is already minimal.
\end{proof}

\begin{fol}
The graph $\Gamma$ is connected.
\label{cor:gammacon}
\end{fol}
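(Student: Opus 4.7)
The corollary follows almost immediately from the preceding lemma, so the plan is essentially to invoke it. Given any two chambers $K, L \in \KKK$, Lemma \ref{finsep} guarantees that $S(K,L)$ is finite, and then Lemma \ref{sepisdis} directly produces a gallery of length $|S(K,L)|$ from $K$ to $L$. By the definition of the chamber graph $\Gamma$, a gallery is precisely a path in $\Gamma$, so this exhibits a path between any two vertices of $\Gamma$.

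Concretely, I would write: let $K, L \in \KKK$ be arbitrary. By Lemma \ref{finsep} the set $S(K,L)$ is finite, and by Lemma \ref{sepisdis} there exists a gallery $K = K_0, K_1, \dots, K_n = L$ with $n = |S(K,L)|$. This is by definition a path in $\Gamma$ from $K$ to $L$, so $\Gamma$ is connected.

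There is no real obstacle here; the work has already been done in Lemma \ref{sepisdis}. The only minor point worth being careful about is the degenerate case $K = L$, which corresponds to $n = 0$ (an empty gallery, i.e. the trivial path consisting of the single vertex $K$), but this is automatically covered by the statement of Lemma \ref{sepisdis}. Hence the proof can be given in one or two sentences.
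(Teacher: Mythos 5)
Your proof is correct and matches the paper's intent exactly: the corollary is stated immediately after Lemma \ref{sepisdis} with no separate proof, precisely because that lemma already produces a gallery (hence a path in $\Gamma$) between any two chambers. Your handling of the trivial case $K=L$ is a fine, if unnecessary, precaution.
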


\begin{defi}
Let $(M,d)$ be a connected metric space. Let $x \in M$ and $A \subseteq M$. A point $y \in A$ is called a \textit{gate of $x$ to $A$} or \textit{the projection of $x$ on $A$} if $y \in \sigma(x,z)$ for all $z \in A$. The gate of $x$ to $A$ is uniquely determined, if it exists, and we will then denote it by $\proj_A(x)$. The set $A$ is called gated if every $x \in M$ has a gate to $A$.
\end{defi}

\begin{defi}
Let $x \in \ol{T}$. Define the graph $\Gamma_x = (\KKK_x, E_x)$, where $E_x = \{\{K,K'\} \in E \mid K, K' \in \KKK_x\}$. Then $\Gamma_x$ is a subgraph of $\Gamma$ and is called the \textit{parabolic subgraph at $x$}. A subgraph $\Gamma'$ of $\Gamma$ is called \textit{parabolic}, if it is a parabolic subgraph at $x$ for some $x \in \ol{T}$.
\end{defi}

\begin{rem}
The parabolic subgraphs and subarrangements correspond to the residues in the case where $\AAA$ is a hyperplane arrangement coming from a Coxeter group.
\end{rem}

\begin{lem}
Parabolic subgraphs are connected.
\label{Kxcon}
\end{lem}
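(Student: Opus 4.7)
The plan is to reduce the connectedness of $\Gamma_x$ to the already-known connectedness (Corollary \ref{cor:gammacon}) of the chamber graph of the parabolic subarrangement $(\AAA_x,T)$.

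The first step is to invoke Lemma \ref{lem:subarr}: the pair $(\AAA_x,T)$ is itself a hyperplane arrangement, and its chamber set $\KKK(\AAA_x,T)$ is in natural bijection with $\KKK_x$ via $K\mapsto\tilde K$, where $\tilde K$ is the unique chamber of $(\AAA_x,T)$ that contains $K$. I would then check that this bijection upgrades to a graph isomorphism $\Gamma_x\cong\Gamma(\AAA_x,T)$. Fix $K,L\in\KKK_x$ and write $S_{\AAA_x}(\tilde K,\tilde L)$ for the set of hyperplanes of $\AAA_x$ separating $\tilde K$ and $\tilde L$. Because $K\subseteq\tilde K$ and $L\subseteq\tilde L$, for every $H\in\AAA_x$ one has $D_H(K)=D_H(\tilde K)$ and $D_H(L)=D_H(\tilde L)$, which gives $S_{\AAA_x}(\tilde K,\tilde L)=S(K,L)\cap\AAA_x$. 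Combined with Lemma \ref{starsep}, which forces $S(K,L)\subseteq\AAA_x$, this yields the exact equality $S(K,L)=S_{\AAA_x}(\tilde K,\tilde L)$. Applying Lemma \ref{adjsep} in each of the two arrangements, $K$ and $L$ are adjacent in $\Gamma$ iff $|S(K,L)|=1$ iff $\tilde K$ and $\tilde L$ are adjacent in $\Gamma(\AAA_x,T)$. Since $\Gamma_x$ is by definition the induced subgraph of $\Gamma$ on $\KKK_x$, adjacency in $\Gamma_x$ coincides with adjacency in $\Gamma$, so the bijection is a graph isomorphism.

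Once this isomorphism is in hand, I would conclude by applying Corollary \ref{cor:gammacon} to the hyperplane arrangement $(\AAA_x,T)$: its chamber graph is connected, hence so is $\Gamma_x$. The only delicate point is establishing the set equality $S(K,L)=S_{\AAA_x}(\tilde K,\tilde L)$ rather than a single inclusion, and Lemma \ref{starsep} is precisely what is needed. A natural alternative would be to pick a minimal gallery $K=K_0,\dots,K_m=L$ in $\Gamma$ supplied by Lemma \ref{sepisdis} and argue directly that $x\in\ol{K_i}$ for every $i$ (every wall crossed lies in $\AAA_x$, hence passes through $x$); this also works, but verifying $x\in\ol{K_i}$ is a bit more fiddly when $x\in\partial T$ and one has no open $T$-neighborhood of $x$ to play with, which is why the subarrangement route seems preferable.
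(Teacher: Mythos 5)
Your proof is correct and follows exactly the paper's route: the paper also identifies $\Gamma_x$ with the chamber graph of the subarrangement $(\AAA_x,T)$ and invokes Corollary \ref{cor:gammacon}, though it asserts the identification in one sentence where you carefully verify it via Lemma \ref{starsep} and Lemma \ref{adjsep}.
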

\begin{proof}
The graph $\Gamma_x$ corresponds to the chamber graph of the hyperplane arrangement $(\AAA_x, T)$, and is therefore connected by Corollary \ref{cor:gammacon}.
\end{proof}

\begin{lem}
Let $x \in \ol{T}$, $K \in \KKK$. Then there exists a unique chamber $G \in \KKK_x$, such that $D_H(K) = D_H(G)$ for all $H \in \AAA_x$.
\label{starunique}
\end{lem}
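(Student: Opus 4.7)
The plan is to reduce everything to Lemma \ref{lem:subarr}, which already establishes that $(\AAA_x,T)$ is a hyperplane arrangement and that the chambers of this parabolic subarrangement correspond bijectively to $\KKK_x$ via the relation ``contains''. More precisely, each chamber of $(\AAA_x,T)$ is a connected component of $T \setminus \bigcup_{H \in \AAA_x} H$ and therefore lies entirely in a well-defined open half-space $D_H(\cdot)$ for every $H \in \AAA_x$, and by Lemma \ref{lem:subarr} it contains exactly one element of $\KKK_x$.

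For existence, I would observe that $K$ is connected, contained in $T$, and disjoint from every $H \in \AAA_x \subseteq \AAA$, so $K$ lies in a unique chamber $C$ of $(\AAA_x, T)$. Let $G \in \KKK_x$ be the unique element of $\KKK_x$ contained in $C$ provided by Lemma \ref{lem:subarr}. Since both $K \subseteq C$ and $G \subseteq C$, and since $D_H(C)$ is a well-defined open half-space for every $H \in \AAA_x$, the chain of equalities $D_H(K) = D_H(C) = D_H(G)$ holds simultaneously for all $H \in \AAA_x$, which is exactly what is required.

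For uniqueness, suppose $G' \in \KKK_x$ also satisfies $D_H(G') = D_H(K)$ for every $H \in \AAA_x$. Then $G'$ and $K$ lie on the same side of every hyperplane in $\AAA_x$, so they belong to the same connected component of $T \setminus \bigcup_{H \in \AAA_x} H$, namely $C$. Applying the bijection from Lemma \ref{lem:subarr} one more time, the unique element of $\KKK_x$ contained in $C$ is $G$, which forces $G' = G$. The only point that requires a moment of care is making the half-space structure of a chamber of $(\AAA_x, T)$ explicit, but this is immediate from connectedness: a connected set disjoint from $H$ cannot meet both of the open half-spaces $H$ determines.
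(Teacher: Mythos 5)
Your proof is correct and follows essentially the same route as the paper: both arguments identify the chamber of the parabolic subarrangement $(\AAA_x,T)$ containing $K$ (your $C$ is exactly the paper's $S = T \cap \bigcap_{H \in \AAA_x} D_H(K)$) and then invoke the correspondence of Lemma \ref{lem:subarr}. The only cosmetic difference is in uniqueness, where the paper appeals directly to Lemma \ref{starsep} while you route through the bijection of Lemma \ref{lem:subarr} a second time; the two are equivalent since that bijection is itself proved via Lemma \ref{starsep}.
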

\begin{proof}
We prove existence first. Let $K \in \KKK$ and consider the intersection 
$$S := T \cap \bigcap_{H \in \AAA_x} D_H(K).$$
Then $S$ is a chamber of $(\AAA_x,T)$, since it is a non-empty connected component of $T$. By Lemma \ref{lem:subarr} $S$ corresponds to a unique chamber $G$ in $\KKK_x$, and $D_H(G) = D_H(S) = D_H(K)$ holds. 

So let $G \neq G'$ be another chamber in $\KKK_x$. Then there exists an element $H \in \AAA$ such that $H$ separates $G$ and $G'$. By Lemma \ref{starsep} we obtain $H \in \AAA_x$, but this implies $D_H(G') \neq D_H(G) = D_H(K)$. Hence $G$ is unique with the property that $D_H(K) = D_H(G)$ for all $H \in \AAA_x$.
\end{proof}

\begin{prop}
For $X \in \ol{T}$ the set $\KKK_x$ are gated subsets of $(\KKK, d_\Gamma)$.\label{stargate}
\end{prop}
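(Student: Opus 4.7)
The plan is to exhibit the gate of $K \in \KKK$ in $\KKK_x$ explicitly using Lemma \ref{starunique}: let $G \in \KKK_x$ denote the unique chamber satisfying $D_H(K) = D_H(G)$ for all $H \in \AAA_x$, and I claim that $G = \proj_{\KKK_x}(K)$. Observe that when $K$ already lies in $\KKK_x$, the uniqueness clause of Lemma \ref{starunique} forces $G = K$, as one would expect.

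By Lemma \ref{sepisdis}, verifying the gate condition reduces to showing, for every $L \in \KKK_x$, the numerical identity $|S(K,L)| = |S(K,G)| + |S(G,L)|$. I would in fact establish the stronger set-theoretic decomposition
$$S(K,L) = S(K,G) \sqcup S(G,L).$$
Disjointness is immediate: the defining property of $G$ yields $S(K,G) \subseteq \AAA \setminus \AAA_x$, while Lemma \ref{starsep} gives $S(G,L) \subseteq \AAA_x$ since $G,L \in \KKK_x$.

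For the set equality I would proceed by case analysis on whether a separating hyperplane lies in $\AAA_x$. The key observation is that for $H \in \AAA$ with $H \notin \AAA_x$ one has $x \notin H$, so $x$ lies in the open half-space $D_H(\{x\})$; since $x \in \ol{G} \cap \ol{L}$, the closed chambers $\ol{G}$ and $\ol{L}$ are each contained in that same closed half-space, forcing $D_H(G) = D_H(\{x\}) = D_H(L)$. Now given $H \in S(K,L)$: if $H \in \AAA_x$ then $D_H(K) = D_H(G)$ and hence $H \in S(G,L)$; if $H \notin \AAA_x$, the preceding observation gives $D_H(G) = D_H(L) \neq D_H(K)$, hence $H \in S(K,G)$. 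The reverse inclusions are symmetric applications of the same identities, using that $S(K,G) \subseteq \AAA \setminus \AAA_x$ and $S(G,L) \subseteq \AAA_x$ pin down the side of $H$ at which $L$ (resp.\ $K$) sits.

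The main point of care is the well-definedness of $D_H(\{x\})$ at the boundary point $x \in \ol{T}$: this rests on the fact that $H \notin \AAA_x = \supp(x)$ is equivalent to $x \notin H$, so that $x$ sits strictly on one side of every such $H$. With this clarified the decomposition is clean bookkeeping of half-space relations, and the gate property of $G$ — and hence the gatedness of $\KKK_x$ — follows at once.
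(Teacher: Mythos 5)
Your proposal is correct and follows essentially the same route as the paper: both take the chamber $G$ from Lemma \ref{starunique} as the candidate gate and establish the disjoint decomposition $S(K,L) = S(K,G)\ \dot\cup\ S(G,L)$ by splitting separating hyperplanes according to membership in $\AAA_x$, using Lemma \ref{starsep} for one half and the defining property of $G$ for the other. Your explicit treatment of the reverse inclusions and of the well-definedness of $D_H(\{x\})$ for $H\notin\AAA_x$ is a slightly more careful write-up of the same argument.
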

\begin{proof}
Let $x \in \ol{T}$ and $\Gamma_x$ be a parabolic subgraph. Let $K \in \KKK$. By Lemma \ref{starunique} there exists a unique chamber $G_K \in \KKK_F$ such that $D_H(G_K) =D_H(K)$ for all $H \in \AAA_x$. We will prove that $G_K$ is a gate of $K$ to $\KKK_x$. So let $L \in \KKK_x$. If $H \in \AAA$ separates $G_K$ and $L$, we immediately get by Lemma \ref{starsep} that $H \in \AAA_x$. On the other hand we get that if $H' \in \AAA$ separates $G_K$ and $K$, by construction of $G_K$ we find $H' \notin \AAA_x$. Now assume $H \in S(K,L) \cap \AAA_x$, we find that since $H \in \AAA_x$, $H$ does not separate $G_K$ and $K$ and therefore must separate $G_K$ and $L$. Assume on the other hand that $H \in S(K,L) \cap (\AAA \setminus \AAA_x)$, then it cannot separate $L$ and $G_K$, and therefore must separate $G_K$ and $K$. Summarized this yields
\begin{align*}
S(K,L) &= (S(K,L) \cap \AAA_x)\ \dot\cup\ (S(K,L) \cap (\AAA \setminus \AAA_x))\\
&= S(K,G_K)\ \dot\cup\ S(G_K,L).
\end{align*}
We thus find for all $L \in \KKK_x$ that $d_\Gamma(K,L) = d_\Gamma(K,G_K) + d_\Gamma(G_K,L)$. So $G_K$ is indeed a gate for $K$ to $\Gamma_x$.
\end{proof}

\subsection{Restrictions}

We will now consider the structure induced on hyperplanes $H$ by the elements in $\AAA$.

\begin{defi}
Let
$(\AAA,T)$ be a simplicial arrangement. Let $H \leq V$ be a hyperplane in $V$, such that $H \cap T \neq \emptyset$. Set 
$$\AAA^H := \{H' \cap H \leq H\ |\ H' \in \AAA \setminus \{H\}, H' \cap H \cap T \neq \emptyset\},$$
this is a set of hyperplanes in $H$ which have non empty intersection with $T \cap H$, if $T \cap H$ is not empty itself. 

We also define the connected components of $H \setminus \bigcup_{H' \in \AAA^H} H'$ as $\KKK^H$.
\end{defi}

\begin{lem}
The pair $(\AAA^H, T \cap H)$ is a hyperplane arrangement of rank $r-1$ with chambers $\KKK^H$.
\end{lem}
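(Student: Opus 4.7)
My plan is to verify in order the four clauses that define a hyperplane arrangement for the pair $(\AAA^H, T \cap H)$, establish the rank, and then handle the chamber identification.

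First I would check that $T \cap H$ is an open convex cone in $H$. Openness is immediate from openness of $T$ in $V$; convexity and the cone property are inherited because $H$ is a linear subspace (so closed under convex combinations and positive scaling). Next, every element $H' \cap H \in \AAA^H$ is a hyperplane in $H$: since $H' \neq H$ are two distinct hyperplanes of $V = \RR^r$, their intersection has dimension exactly $r-2 = \dim H - 1$. By the defining condition $H' \cap H \cap T \neq \emptyset$, each such hyperplane meets $T \cap H$. This also establishes that the rank is $\dim H = r-1$.

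The next step is local finiteness. Given $x \in T \cap H$, Lemma \ref{compfin} provides an open neighborhood $U_x \subset T$ (in $V$) with $\sec_{\AAA}(U_x)$ finite. Then $U_x \cap H$ is a neighborhood of $x$ in $T \cap H$, and if $H'' = H' \cap H \in \AAA^H$ meets $U_x \cap H$, then $H' \cap U_x \neq \emptyset$, i.e.\ $H' \in \sec_\AAA(U_x)$. Since distinct elements of $\AAA^H$ come from distinct elements of $\AAA\setminus\{H\}$, only finitely many hyperplanes of $\AAA^H$ meet $U_x \cap H$.

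The step I expect to require the most care is the identification of chambers, because a priori the connected components of $(T \cap H) \setminus \bigcup_{H'' \in \AAA^H} H''$ (which by our general definition are the chambers) need not coincide with the sets $C \cap (T \cap H)$ for $C \in \KKK^H$. The key observation is that these intersections are already connected. Indeed, if $C \in \KKK^H$ and $x,y \in C \cap (T \cap H)$, then $[x,y] \subset H$ since $H$ is a linear subspace, and $[x,y] \subset T$ by convexity of $T$. For any $H'' = H' \cap H \in \AAA^H$, the points $x$ and $y$ lie in the same open half-space of $H$ bounded by $H''$ (they share a connected component of $H \setminus \bigcup H''$), so $[x,y] \cap H'' = \emptyset$; hence $[x,y] \subset C \cap (T\cap H)$. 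This shows $C \cap (T\cap H)$ is convex and thus connected, and that the chambers of $(\AAA^H, T \cap H)$ are precisely the nonempty sets of this form, naturally indexed by those $C \in \KKK^H$ that meet $T$, which is the content of the claim.
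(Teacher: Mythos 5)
Your proof is correct and follows essentially the same route as the paper's: you verify that $T\cap H$ is an open convex cone in $H\cong\RR^{r-1}$, that every element of $\AAA^H$ meets $T\cap H$, and you obtain local finiteness by pulling back a neighbourhood $U_x\subset T$ with $\sec_\AAA(U_x)$ finite, exactly as in the paper. The only difference is that you handle the chamber identification carefully (showing each $C\cap(T\cap H)$ is convex, hence a connected component of $(T\cap H)\setminus\bigcup_{H''\in\AAA^H}H''$), whereas the paper settles this with ``by definition''; your extra care is harmless and arguably preferable, since $\KKK^H$ is literally defined via components of $H\setminus\bigcup_{H''\in\AAA^H}H''$ rather than of $(T\cap H)\setminus\bigcup_{H''\in\AAA^H}H''$.
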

\begin{proof}
By definition for $H' \in \AAA^H$ we find $H' \cap H \cap T \neq \emptyset$. The set $T \cap H$ is an open convex cone in $H$, which is isomorphic to $\RR^{r-1}$. Let $x \in T \cap H$, $U$ a neighborhood of $x$ such that $\sec(U)$ is finite. Then $\sec_{\AAA^H}(U \cap H)$ is also finite, hence $\AAA^H$ is locally finite in $T \cap H$.

By definition $\KKK^H$ are the chambers of this arrangement.
\end{proof}

In the case where $H \in \AAA$, we need some basic relations between chambers in $\KKK$ and in $\KKK^H$.

\begin{lem}
Assume $H \in \AAA$ and let $K \in \KKK$. If $H \in W^K$, then $\ol{K} \cap H$ is the closure of a chamber in $\KKK^H$. For every $K' \in \KKK^H$ there exists exactly two chambers $K_1, K_2 \in \KKK$, such that $H \in W^{K_i}$ and $\ol{K_i} \cap H = \ol{K'}$ for $i = 1,2$.
\label{indfaces}
\end{lem}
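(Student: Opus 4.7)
Set $F := \ol{K} \cap H$. Since $H \in W^K$, the closed convex cone $F$ spans $H$, so its relative interior in $H$, which I denote $F^\circ$, is a nonempty open convex subset of $H$. The strategy is to show that $F^\circ$ coincides with the (unique) chamber $K' \in \KKK^H$ containing any of its points, so that $\ol{K'} = F$. I first establish, for any $y \in F^\circ$: (i) $\supp_\AAA(y) = \{H\}$, and (ii) $y \in T$. For (i), if some $H' \in \AAA \setminus \{H\}$ contained $y$, then $\ol{K} \subset \ol{D_{H'}(K)}$ would force $F$ into the closed half of $H$ bounded by $H' \cap H$, which passes through $y$, contradicting $y \in F^\circ$. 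The main technical step is (ii): if $y \in \partial T$, the supporting hyperplane theorem for the open convex set $T$ yields a hyperplane $P$ with $y \in P$, $T \subset D_P(T)$, and $P \cap T = \emptyset$. The case $P = H$ contradicts $H \cap T \neq \emptyset$ (which holds since $H \in \AAA$); in the case $P \neq H$, $F \subset \ol{T} \cap H$ lies in the closed half of $H$ bounded by $P \cap H \ni y$, contradicting $y \in F^\circ$ in the same way as in (i).

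Given (i) and (ii), every $y \in F^\circ$ lies in $(T \cap H) \setminus \bigcup_{H'' \in \AAA^H} H''$ and therefore belongs to some chamber $K' \in \KKK^H$. Since $F^\circ$ is open, connected (being convex), and disjoint from every $H'' \in \AAA^H$ by (i), we get $F^\circ \subset K'$. Conversely, for $z \in K'$ pick a path $\gamma \subset K'$ from $y$ to $z$; at every $p \in \gamma$ the support $\supp_\AAA(p) = \{H\}$ (by the definition of $\KKK^H$), so by local finiteness a small $V$-ball around $p$ meets exactly two chambers of $\KKK$, one on each side of $H$. The chamber on the $D_H(K)$-side of $H$ is a locally constant function of $p \in \gamma$; at $p = y$ it equals $K$ (since $y \in F \subset \ol{K}$), so it equals $K$ along all of $\gamma$. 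In particular $z \in \ol{K} \cap H = F$, and since $K'$ is open in $H$ and contained in $F$, we conclude $z \in F^\circ$. This proves $K' = F^\circ$, whence $\ol{K'} = \ol{F^\circ} = F$.

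For the second assertion, fix $K' \in \KKK^H$ and $y \in K'$. Since $\supp_\AAA(y) = \{H\}$, a $V$-neighborhood of $y$ meets exactly two chambers $K_1, K_2 \in \KKK$ lying on opposite sides of $H$. The same locally-constant-chamber argument along paths in $K'$ gives $K' \subset \ol{K_i} \cap H$ for $i = 1, 2$; since $K'$ is $(r-1)$-dimensional in $H$, $\ol{K_i} \cap H$ spans $H$, so $H \in W^{K_i}$. The first assertion applied to each $K_i$ produces a chamber $K_i' \in \KKK^H$ with $\ol{K_i} \cap H = \ol{K_i'}$, and the containment $y \in K' \cap K_i'$ forces $K_i' = K'$. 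For uniqueness, any $K_3 \in \KKK$ with $H \in W^{K_3}$ and $\ol{K_3} \cap H = \ol{K'}$ satisfies $y \in \ol{K_3}$; but the only chambers of $\KKK$ whose closure contains the point $y$ (which lies only on $H$) are the two chambers $K_1, K_2$ adjacent to $y$ through $H$, so $K_3 \in \{K_1, K_2\}$. The main obstacle throughout is step (ii): ensuring that $F^\circ$ is not pinned against $\partial T$, which is where the openness and convexity of $T$ combine essentially with $H \in \AAA$.
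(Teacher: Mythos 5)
Your proof is correct, and its skeleton agrees with the paper's: both arguments observe that the relative interior of $F=\ol{K}\cap H$ meets no hyperplane of $\AAA^H$ and hence lies in a single chamber $K'\in\KKK^H$, and both deduce the second assertion from the fact that a point of $K'$ has support $\{H\}$, so that a small ball around it (via Lemma \ref{compfin}) meets exactly two chambers of $\KKK$, one on each side of $H$, whence uniqueness by Lemma \ref{adjsep}. You diverge from the paper in two places. First, for the reverse inclusion $\ol{K'}\subseteq F$ the paper picks $x\in\ol{K'}\setminus F$, a chamber $L$ with $x\in\ol{L}$, and derives a contradiction from a separating hyperplane $H'\in S(K,L)$; you instead run a local-constancy argument along paths in $K'$, tracking the chamber of $\KKK$ on the $D_H(K)$-side of $H$. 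The paper's route is shorter and reuses the $S(K,L)$ machinery; yours is more elementary and makes the role of local finiteness explicit. Second, your step (ii) -- that the relative interior of $F$ actually lies in $T$, proved via a supporting hyperplane of $T$ at a putative boundary point together with $H\cap T\neq\emptyset$ -- addresses a point the paper's proof silently assumes: without it, $F$ might a priori fail to meet $T\cap H$ and so fail to lie in the closure of any chamber of $(\AAA^H,T\cap H)$. This is exactly where the hypothesis $H\in\AAA$ enters, and making it explicit is a genuine improvement in rigor over the published argument.
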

\begin{proof}
For the first statement, $\ol{K} \cap H$ is contained in the closure of a chamber $K'$ of $(\AAA^H, T \cap H)$, since its interior with respect to $H$ does not meet any hyperplane. Let $x \in \ol{K'} \setminus (\ol{K} \cap H)$. Then $x \in \ol{L}$ for some chamber $L \in \KKK$. Let $H' \in S(K,L)$. Assume $x \notin H'$, then $x \in D_{H'}(L)$, while $\ol{K} \cap H$ is either contained in $H'$ or in $-D_{H'}(L) = D_{H'}(L)$. A contradiction to $x \in K'$. Thus we find $x \in H'$. Since $x \notin \ol{K}$, there exists a wall of $K$ separating $K$ and $x$. A contradiction, hence $\ol{K'} = \ol{K} \cap H$.

For the second statement, take $x \in K'$, then $x \in H$ and $\supp(\{x\}) = \{H\}$. Choose a neighborhood $U$ of $X$ such that $\sec(U) = \{H\}$, which is possible by Lemma \ref{compfin}. The hyperplane $H$ separates $U$ into two connected components, which are each contained in chambers $K_1, K_2$. By definition $K_1$ and $K_2$ are adjacent by $H$, and by Lemma \ref{adjsep} there can not exist another chamber being adjacent to $K_1$ or $K_2$ by $H$.
\end{proof}

\section{Tits cones and Tits arrangements}
\label{sec:tits}
\subsection{Simplicial cones}

In this section we introduce simplicial cones and collect some basic properties. As before, throughout this section let $V = \RR^r$.

\begin{defi}
Let $\alpha \in V^\ast$ be a linear form, then
\begin{align*}
\alpha^\perp &:= \ker \alpha,\\
\alpha^+ &:= \alpha^{-1}(\RR_{>0}),\\
\alpha^- &:= \alpha^{-1}(\RR_{<0}).
\end{align*}

Let $B$ be a basis of $V^\ast$, then the \textit{open simplicial cone (associated to $B$)} is 
$$
K^B := \bigcap_{\alpha \in B} \alpha^+.
$$
\end{defi}

\begin{remdef}
With notation as above we find
\begin{align*}
\ol{\alpha^+} &= \alpha^{-1}(\RR_{\geq 0}) = \alpha^\perp \cup \alpha^+,\\
\ol{\alpha^-} &= \alpha^{-1}(\RR_{\leq 0}) = \alpha^\perp \cup \alpha^-.
\end{align*}
Let $B$ be a basis of $V^\ast$. We can then define the \textit{closed simplicial cone (associated to $B$)} as 
$$
\ol{K^B} = \bigcap_{\alpha \in B}\ol{\alpha^+}.
$$

We say a cone is \textit{simplicial} if it is open simplicial or closed simplicial.

A simplicial cone can also be defined using bases of $V$. Let $C$ be a basis of $V$, then the open simplicial cone associated to $C$ is 
$$
K^C = \{\sum_{v \in C}\lambda_v v \mid \lambda_v >0\}
$$
and the closed simplicial cone associated to $C$ is 
$$
\ol{K^C} = \{\sum_{v \in C}\lambda_v v \mid \lambda_v \geq 0\}.
$$

Both concepts are equivalent, and it is immediate from the definition that if $B \subset V^\ast$ and $C \subset V$ are bases, then $K^B = K^C$ if and only if $B$ is, up to positive scalar multiples and permutation, dual to $C$.

A simplicial cone $K$ associated to $B$ carries a natural structure of a simplex, to be precise:

$$\SSSS_K := \{\ol{K} \cap \bigcap_{\alpha \in B'} \alpha^\perp \mid B' \subset B\}$$
is a poset with respect to inclusion, which is isomorphic to $\PP(B)$ with inverse inclusion. If $C$ is the basis of $V$ dual to $B$, we find $\SSSS_K$ to be the set of all convex combinations of subsets of $C$, and $\SSSS_K$ is also isomorphic to $\PP(C)$. Moreover, $\{\RR_{\geq 0}c \mid c \in C\}$ is the vertex set of the simplex $\SSSS_K$.

For a simplicial cone $K$, we denote with $B_K \subset V^\ast$ a basis of $V^\ast$ such that $K^{B_K} = K$.
\label{rem:scs}
\end{remdef}

\begin{lem}
An open or closed simplicial cone is convex and has non-empty interior.
\label{cone:simpprop1}
\end{lem}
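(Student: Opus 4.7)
The plan is to treat convexity and non-emptiness of the interior separately, reducing both to elementary observations about the defining half-spaces.

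For convexity, I would start from the definition: with $B \subset V^\ast$ a basis, we have $K^B = \bigcap_{\alpha \in B}\alpha^+$ and $\ol{K^B} = \bigcap_{\alpha \in B}\ol{\alpha^+}$. Each $\alpha^+$ and each $\ol{\alpha^+}$ is a half-space, hence convex, and convexity is preserved under arbitrary intersections. This immediately gives convexity in both the open and closed cases.

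For the interior, I would invoke the equivalent description from Definition and Remark \ref{rem:scs}: if $C = \{v_1,\dots,v_r\} \subset V$ denotes the basis of $V$ dual to $B = \{\alpha_1,\dots,\alpha_r\}$, i.e.\ $\alpha_i(v_j) = \delta_{ij}$, then the vector $v := v_1 + \cdots + v_r$ satisfies $\alpha_i(v) = 1 > 0$ for every $i$, so $v \in K^B$. Thus $K^B$ is non-empty. Since $K^B$ is the intersection of finitely many open half-spaces, it is open, so $K^B$ coincides with its interior, which is therefore non-empty. For the closed simplicial cone, the chain $K^B \subseteq \ol{K^B}$ together with the fact that the open cone $K^B$ is a non-empty open subset of $\ol{K^B}$ shows that the interior of $\ol{K^B}$ is non-empty as well.

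There is no real obstacle here; the statement is essentially a direct unwinding of the two equivalent descriptions of simplicial cones given just above. The only point that deserves a word of care is to make sure that the dual basis argument is used at the right place so that the same witness $v$ works simultaneously for all the defining inequalities, which is exactly what the basis property of $B$ guarantees.
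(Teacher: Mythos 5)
Your proposal is correct and follows essentially the same route as the paper: convexity via intersection of (convex) half-spaces, and non-emptiness of the interior via the witness $\sum_{v \in C} v$ built from the dual basis, with the closed case reduced to the open one. No gaps.
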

\begin{proof}
Let $K$ be an open simplicial cone and $B_K \subset V^\ast$ a basis associated to $K$. Then $K$ is convex and open as an intersection of convex and open subsets. Furthermore let $C$ be dual to $B_K$, then we obtain that $\sum_{v \in C} v \in K$, hence $K$ is not empty and thus has non-empty interior.

If $T$ is a closed simplicial cone, it contains the closure of an open simplicial cone, and has non-empty interior.
\end{proof}

\begin{rem}
The common notation for cones introduces \textit{properness} of a cone $K$ as the property of having non-empty interior and being closed, convex, and pointed, the latter meaning that $v, -v \in K \implies v = 0$. Thus all closed simplicial cones are proper. In our context being proper is not of interest, the cones we are dealing with are either convex and open or already simplicial.
\end{rem}

\subsection{Root systems and Tits arrangements}

In the following let $V = \RR^r$ and $T \subseteq V$ be an open convex cone. In this section we establish a notion of Tits arrangements on $T$. The interesting cases the reader may think of are $T = \RR^r$, or a half-space $T = \alpha^+$ for some $\alpha \in V^\ast$.

We can now define our main objects of interest.
\begin{defi}
Let $T \subseteq V$ be a convex open cone and $\AAA$ a set of linear hyperplanes in $V$. We call a hyperplane arrangement $(\AAA, T)$ a \textit{simplicial arrangement (of rank $r$)}, if every $K \in \mathcal{K}(\AAA)$ is an open simplicial cone.

The cone $T$ is the \textit{Tits cone} of the arrangement. A simplicial arrangement is a \textit{Tits arrangement}, if it is thin.
\end{defi}
\begin{figure}[ht]%
\includegraphics[width=0.6\columnwidth]{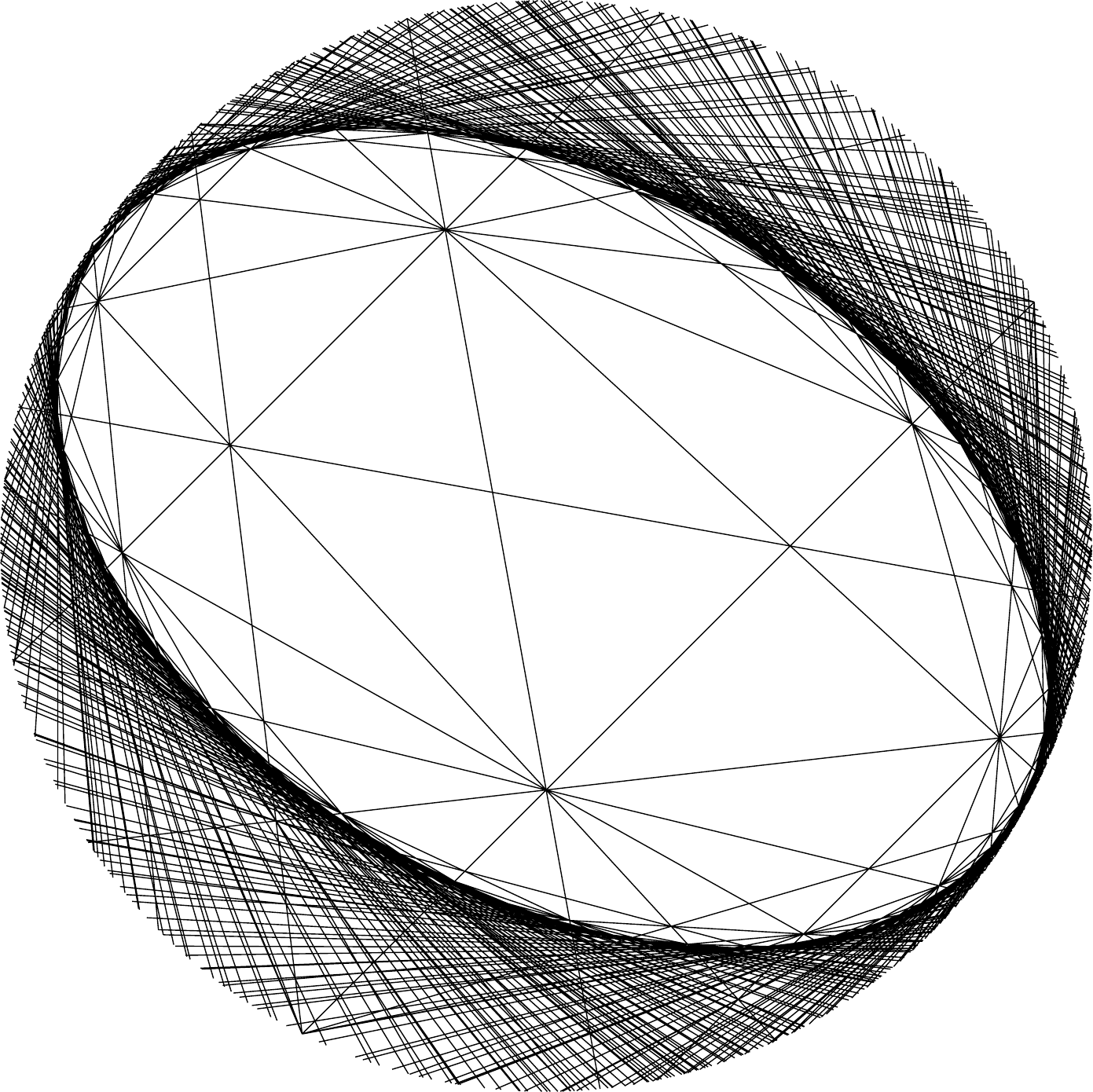}%
\caption{A Tits arrangement of rank three. The area outside of the cone is not entirely black because we display only a finite subset of the hyperplanes.}%
\label{extits}%
\end{figure}
\begin{rem}
\begin{enumerate}
\item The definition of ``thin'' requires that all possible walls are already contained in $\AAA$. If we do not require this, a bounding hyperplane may arise as a bounding hyperplane of $T$ itself. This happens in Example \ref{exm:notthin}, which is not thin. Consider also the case where $T$ itself is an open simplicial cone. Even the empty set then satisfies that the one chamber, which is $T$ itself, is a simplicial cone. However, it has no walls in $\AAA$, hence it is not thin.
\item From the definition it follows that in dimensions $0$ and $1$, there are very few possibilities for simplicial arrangements and Tits arrangements.

In dimension $0$, the empty set is the only possible Tits arrangement.

In the case $V = \RR$, we find $T = \RR$ or $T = \RR_{>0}$ or $T = \RR_{<0}$. In the first case, $\{0_\RR\}$ is a Tits arrangement and in the other two cases, $\{0_\RR\}$ is a simplicial arrangement, but not thin any more.
\item Later in this section we introduce the notion of $k$-spherical arrangements, which is a refinement of being thin.
\item In the case where $T = \RR^r$, the property of $\AAA$ being locally finite is equivalent to $\AAA$ being finite, as $0$ is contained in every hyperplane. Furthermore $T = \RR^r$ is the only case where $0 \in T$, as the cone over every neighborhood of $0$ is already $\RR^r$.
\end{enumerate}
\end{rem}

\begin{exmp}
(i) Consider Example \ref{exm:notthin}, there $K_1$ and $K_2$ are simplicial cones, with $W^{K_1} = \{L, \ker(\alpha_1)\}$ and $W^{K_2} = \{L, \ker(\alpha_2)\}$. The set $\{L\}$ is therefore a simplicial arrangement of rank 2 in $T$, which is finite and hence also locally finite. But $(\{L\}, T)$ is not thin, as
$$T \cap \ker(\alpha_1) = \emptyset = T \cap \ker(\alpha_2).$$
Note that this particular example can be turned into a thin arrangement by factoring out the radical (cp.\ Example \ref{exm:deg}). However, one can easily find examples of non-degenerate simplicial arrangements which are not thin.

(ii) In Example \ref{exm:thinf}, the set $\{L_n, {L_n}' \mid n \in \NN_{>0}\}$ of hyperplanes is not finite, but it is locally finite in $T$. If $(x,y) \in \RR^2$, we cannot find a neighborhood $U$ of $(x,y)$ which meets only finitely many hyperplanes if and only if $x = 0$ or $y = 0$, and hence $(x,y) \notin T$.

A chamber $K$ satisfies one of the following:

\begin{enumerate}
	\item $W^K = \{L_n, L_{n+1}\}$ for some $n \geq 0$, or
	\item $W^K = \{{L_n}', {L_{n+1}}'\}$ for some $n \geq 0$.
\end{enumerate}
Therefore, $(\{L_n, {L_n}' \mid n \in \NN_{>0}\}, T)$ is a Tits arrangement of rank 2.

Likewise, the hyperplane arrangement in Example \ref{exm:univ3} is a Tits arrangement of rank 3 in $T$. The vertices of the simplicial cones are contained in $\partial T$, but all walls meet $T$.

(iii) Figure (\ref{extits}) shows a Tits arrangement coming from the Weyl groupoid of an infinite dimensional Nichols algebra.
\end{exmp}

\begin{lem}
Tits arrangements are non-degenerate.
\label{lem:Titsnd}
\end{lem}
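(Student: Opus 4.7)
The plan is to exploit the simplicial structure of a single chamber together with the thinness hypothesis, since thinness is what forces enough hyperplanes to actually appear in $\AAA$. So I would pick any chamber $K \in \KKK(\AAA, T)$ (which is possible because the set of chambers is non-empty whenever $T$ is non-empty and open) and use that $K$ is an open simplicial cone to produce a basis of $V^\ast$ whose associated hyperplanes are all walls of $K$.

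Concretely, since $(\AAA, T)$ is simplicial, Remark/Definition \ref{rem:scs} gives a basis $B_K \subseteq V^\ast$ with
$$K = K^{B_K} = \bigcap_{\alpha \in B_K} \alpha^+.$$
I would then identify the walls of $K$ explicitly: for each $\alpha \in B_K$, the hyperplane $\alpha^\perp$ meets $\ol{K}$ in the facet $\alpha^\perp \cap \bigcap_{\gamma \in B_K \setminus \{\alpha\}} \ol{\gamma^+}$, which spans $\alpha^\perp$ (as is immediate from the simplex structure of $\ol{K}$ recorded in Remark \ref{rem:scs}), and it does not meet the open chamber $K$ because every $v \in K$ satisfies $\alpha(v) > 0$. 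Hence $\alpha^\perp \in W^K$ for every $\alpha \in B_K$.

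Now the thinness of $(\AAA, T)$ gives $W^K \subseteq \AAA$, so in particular $\{\alpha^\perp \mid \alpha \in B_K\} \subseteq \AAA$. Therefore
$$\Rad(\AAA) \;=\; \bigcap_{H \in \AAA} H \;\subseteq\; \bigcap_{\alpha \in B_K} \alpha^\perp \;=\; \{0\},$$
the last equality because $B_K$ is a basis of $V^\ast$. This yields $\Rad(\AAA) = 0$, i.\ e.\ the arrangement is non-degenerate.

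I do not anticipate a genuine obstacle here: the argument is essentially a one-liner once the walls of the simplicial cone $K^{B_K}$ are identified with the coordinate hyperplanes $\alpha^\perp$. The only point that needs a little care is that verification of the walls, but it is forced by the simplex structure of $\SSSS_K$ from Remark \ref{rem:scs} and does not require a separate lemma.
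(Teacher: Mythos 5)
Your proof is correct and follows essentially the same route as the paper: pick a chamber $K = K^{B_K}$, use thinness to get $\{\alpha^\perp \mid \alpha \in B_K\} = W^K \subseteq \AAA$, and conclude $\Rad(\AAA) \subseteq \bigcap_{\alpha \in B_K} \alpha^\perp = \{0\}$ since $B_K$ is a basis of $V^\ast$. The only difference is that you spell out the identification of the walls of $K$ with the hyperplanes $\alpha^\perp$, which the paper leaves implicit.
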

\begin{proof}
Let $K \in \KKK$, $B$ a basis of $V^\ast$ such that $K = K^B$. Since $W^K \subset \AAA$ and $B$ is a basis, we find $\bigcap_{\alpha \in B} \alpha^\perp = \{0\}$.
\end{proof}

Some of the ``classical'' cases are the following choices for $T$.

\begin{defi}
A simplicial arrangement $(\AAA,T)$ is \textit{spherical}, if $T = \RR^r$. We say $(\AAA,T)$ is \textit{affine}, if $T = \gamma^+$ for some $0 \neq \gamma \in V^\ast$. For an affine arrangement we call $\gamma$ the \textit{imaginary root} of the arrangement.
\end{defi}

\begin{rem}
The Tits cone of a Tits arrangement $(\AAA,T)$ resembles the Tits cone for compact hyperbolic Coxeter groups. The geometric representation of an irreducible spherical or affine Coxeter group is a prototype of spherical or affine Tits arrangements.
\end{rem}

\begin{defi}
Let $V = \RR^r$, a \textit{root system} is a set $R \subset V^\ast$ such that
\begin{enumerate}[label=\arabic*)]
  \item $0 \notin R$,
	\item $-\alpha \in R$ for all $\alpha \in R$,
	\item there exists a Tits arrangement $(\AAA,T)$ such that $\AAA = \{\alpha^\perp \mid \alpha \in R\}$. 
\end{enumerate}
If $R$ is a root system, $(\AAA,T)$ as in 3), we say that the Tits arrangement $(\AAA, T)$ is \textit{associated to $R$}.

Let $R$ be a root system. We call a map $\rho: R \to R$ a \textit{reductor of $R$}, if for all $\alpha \in R$
\begin{enumerate}[label=\arabic*)]
  \item $\rho(\alpha) = \lambda_\alpha \alpha$ for some $\lambda_\alpha \in \RR_{>0}$,
	\item $\rho(\langle \alpha \rangle \cap R) = \{\pm \rho(\alpha)\}$.
\end{enumerate}
A root system $R$ is \textit{reduced}, if $\id_R$ is a reductor. Given $R$ and a reductor $\rho$, when no ambiguity can occur, we denote $R^{\red} := \rho(R)$.
\end{defi}

We note some immediate consequences of this definition.

\begin{lem}
Let $R$ be a root system and $(\AAA,T)$ be a Tits arrangement associated to $R$, $\rho$ a reductor of $R$. Then
\begin{enumerate}[label=\roman*)]
	\item $\rho(R)$ is a reduced root system and $(\AAA,T)$ is associated to $R$.
	\item $R$ is reduced if and only if $\langle \alpha \rangle \cap R = \{\pm \alpha\}$ for all $r \in R$.
	\item If $R$ is reduced, $\id_R$ is the only reductor of $R$.
\end{enumerate}
\end{lem}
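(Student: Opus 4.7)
The plan is to work through the three statements in order, exploiting the fact that every reductor $\rho$ only rescales each root by a positive factor, so that $\rho(\alpha)^\perp = \alpha^\perp$ for every $\alpha \in R$.

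For (i), I would verify the three defining axioms for $R' := \rho(R)$. Since $\rho(\alpha) = \lambda_\alpha \alpha$ with $\lambda_\alpha > 0$ and $\alpha \neq 0$, we get $0 \notin R'$. For symmetry, condition 2) of a reductor applied to $\pm\alpha \in \langle\alpha\rangle \cap R$ yields $\rho(-\alpha) \in \{\pm\rho(\alpha)\}$, and combined with $\rho(-\alpha) = -\lambda_{-\alpha}\alpha$ (with $\lambda_{-\alpha} > 0$) this forces $\rho(-\alpha) = -\rho(\alpha)$, so $-\rho(\alpha) \in R'$. For the arrangement condition, $\lambda_\alpha > 0$ gives $\rho(\alpha)^\perp = \alpha^\perp$, hence $\{\beta^\perp \mid \beta \in R'\} = \{\alpha^\perp \mid \alpha \in R\} = \AAA$, so $(\AAA,T)$ is associated to $R'$ as well. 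To see that $R'$ is reduced, I would check that $\id_{R'}$ is a reductor: given $\alpha' = \rho(\alpha) \in R'$, any $\beta' = \rho(\beta) \in \langle\alpha'\rangle \cap R' = \langle\alpha\rangle \cap R'$ satisfies $\beta \in \langle\alpha\rangle \cap R$ (since $\rho$ rescales positively), and then condition 2) for $\rho$ gives $\beta' = \rho(\beta) \in \{\pm\rho(\alpha)\} = \{\pm\alpha'\}$.

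For (ii), both directions are essentially by unpacking definitions. If $R$ is reduced then $\id_R$ is a reductor, and condition 2) applied to $\id_R$ reads exactly $\langle\alpha\rangle \cap R = \{\pm\alpha\}$. Conversely, if this holds for every $\alpha \in R$, then $\id_R(\alpha) = 1\cdot\alpha$ with $1 > 0$ and $\id_R(\langle\alpha\rangle \cap R) = \{\pm\alpha\} = \{\pm\id_R(\alpha)\}$, so $\id_R$ satisfies both reductor axioms.

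For (iii), suppose $R$ is reduced and $\rho$ is any reductor. For $\alpha \in R$ we have $\rho(\alpha) = \lambda_\alpha \alpha$ with $\lambda_\alpha > 0$, and $\rho(\alpha) \in R$. By (ii), $\langle\alpha\rangle \cap R = \{\pm\alpha\}$, so $\lambda_\alpha \alpha \in \{\pm\alpha\}$, forcing $\lambda_\alpha \in \{\pm 1\}$. The positivity $\lambda_\alpha > 0$ selects $\lambda_\alpha = 1$, hence $\rho = \id_R$. No real obstacle arises; the only subtle point is remembering to use positivity of the scaling factors to turn the statement $\rho(\alpha) \in \{\pm\alpha\}$ into $\rho(\alpha) = \alpha$, and to verify the symmetry axiom for $R'$ in part (i) cleanly.
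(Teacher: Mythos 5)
Your proposal is correct and follows essentially the same route as the paper's proof: use positivity of the scaling factors to see that the hyperplanes (hence the arrangement) are unchanged, read off reducedness of $\rho(R)$ from condition 2) of a reductor, unpack the definition for (ii), and combine $\langle\alpha\rangle\cap R=\{\pm\alpha\}$ with $\lambda_\alpha>0$ for (iii). Your explicit verification that $-\rho(\alpha)\in\rho(R)$ is a small detail the paper leaves implicit, but it is not a different argument.
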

\begin{proof}
For i), as $\rho(\alpha) = \lambda_\alpha \alpha \neq 0$, we find that $\{\alpha^\perp \mid \alpha \in R\} = \{\rho(\alpha)^\perp \mid \alpha \in R\}$, so $(\rho(R),T)$ is a root system. Due to the properties of $\rho$, we find that $\langle \alpha \rangle \cap \rho(R) = \{ \pm \rho(\alpha)\}$, so $\id_R$ is a reductor of $R$.

For the second statement assume $\langle \alpha \rangle \cap R = \{\pm \alpha\}$ for all $\alpha \in R$, then the identity is a reductor. Assume that the identity is a reductor, then for $\alpha,\beta \in R$ with $\beta = \lambda \alpha$, $\lambda \in \RR$, we find that $\beta \in \{\pm \alpha\}$.

For the third statement assume that $R$ is reduced, so $\id_R$ is a reductor of $(R,T)$. Let $\alpha \in R$, then $\langle \alpha \rangle \cap R = \{\pm \alpha\}$. If $\rho$ is a reductor of $R$, $\rho(\alpha) = \lambda_\alpha \alpha \in R$, as $\lambda_\alpha$ is positive, we find $\lambda_\alpha = 1$ and $\rho = \id_R$.
\end{proof}

\begin{rem}
In the case where for $\alpha \in R$ the set $\langle \alpha \rangle \cap R$ is finite, the canonical choice for $\rho$ is such that $|\rho(\alpha)|$ is minimal in $\langle r \rangle \cap R$; with respect to an arbitrary scalar product.

The notion of a reductor of $R$ is very general, the intention is to be able to reduce a root system even in the case where for $\alpha \in R$ the set $\langle \alpha \rangle \cap R$ does not have a shortest or longest element. In this most general setting, the existence of a reductor requires the axiom of choice.
\end{rem}

Furthermore we find that, given a Tits arrangement $(\AAA,T)$, root systems always exist.

\begin{lem}
Let $(\AAA, T)$ be a Tits arrangement, $\SSS^{r-1}$ the unit sphere in $\RR^r$ with respect to the standard metric associated to the standard scalar product $(\cdot, \cdot)$. Let
$$S = \bigcup_{H \in \AAA} H^\perp \cap \SSS^{n-1}.$$ Then $R = \{(s,\cdot) \in V^\ast \mid s \in S\}$ is a reduced root system for $\AAA$.

Furthermore every reduced root system $R'$ associated to $(\AAA, T)$ is of the form $R' = \{\lambda_\alpha \alpha \mid \lambda_\alpha = \lambda_{-\alpha} \in \RR_{>0}, \alpha \in R \}$, and every such set is a reduced root system associated to $(\AAA, T)$.
\label{rsx}
\end{lem}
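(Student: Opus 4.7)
The plan is to treat the standard scalar product as providing canonical representatives: each hyperplane $H \in \AAA$ has a one-dimensional orthogonal complement $H^\perp$, so $H^\perp \cap \SSS^{r-1}$ consists of exactly two antipodal points $\pm s_H$, and $\alpha_H := (s_H,\cdot) \in V^*$ satisfies $\alpha_H^\perp = H$. First I would write $R = \{\pm \alpha_H : H \in \AAA\}$ and check the three root system axioms: $0 \notin R$ is clear, $R = -R$ holds by construction, and $\AAA = \{\alpha^\perp : \alpha \in R\}$ also holds by construction. Reducedness follows at once, since under the isomorphism $V^* \cong V$ induced by the scalar product, any element of $\langle \alpha_H \rangle \cap R$ corresponds to a unit vector on $\langle s_H \rangle$, forcing $\langle \alpha_H \rangle \cap R = \{\pm \alpha_H\}$.

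For the classification, let $R'$ be any reduced root system associated to $(\AAA,T)$. For each $H \in \AAA$ there is some $\beta \in R'$ with $\beta^\perp = H$, and any such $\beta$ is a nonzero scalar multiple of $\alpha_H$. Combining reducedness of $R'$ with closure under negation yields a unique $\mu_H > 0$ such that $\pm \mu_H \alpha_H$ are precisely the roots in $R'$ with kernel $H$. Setting $\lambda_{\alpha_H} := \lambda_{-\alpha_H} := \mu_H$ presents $R'$ in the asserted form. The converse direction is a direct verification: any family of positive scalars with $\lambda_\alpha = \lambda_{-\alpha}$ produces a set satisfying all three root system axioms (closure under negation uses precisely the sign relation), and reducedness is preserved under rescaling along lines through the origin.

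The only delicate bookkeeping lies in the relation $\lambda_\alpha = \lambda_{-\alpha}$, which is forced by closure under negation once one has chosen positive representatives on each of the two rays over $H$; aside from this, every step reduces to elementary manipulation of linear forms and their kernels, so no substantial obstacle is expected.
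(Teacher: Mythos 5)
Your proposal is correct and follows essentially the same route as the paper: pick the two antipodal unit normals $\pm s_H$ on each $H^\perp$, verify the root system axioms and reducedness via the bijection $s \mapsto (s,\cdot)$, and for the classification use that $H$ determines its normal direction up to a nonzero scalar, with reducedness and closure under negation forcing exactly one positive scale $\mu_H$ on each line. Your treatment of the constraint $\lambda_\alpha = \lambda_{-\alpha}$ is in fact slightly more explicit than the paper's, which leaves that bookkeeping implicit.
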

\begin{proof}
As $H \in \AAA$ is $r-1$-dimensional, $\dim H^\perp = 1$, so $H^\perp \cap \SSS^{n-1} = \{\pm s\}$ for some vector $s \in V$, with $s^\perp = H$. So $(R,T)$ is a root system associated to $\AAA$. Let $\alpha, \lambda \alpha \in R$ for some $\lambda \in \RR$, then $\alpha^\perp = (\lambda \alpha)^\perp$. As the map $s \mapsto (s, \cdot)$ is bijective, we find $\lambda \in \{\pm 1\}$. Therefore $R$ is reduced.

For the second statement note that the hyperplane $H \in A$ determines $s \in H^\perp$ uniquely up to a scalar. So any $R'$ of the given form satisfies $\{\alpha^\perp \mid \alpha \in R'\} = \AAA$. Furthermore it is reduced, as $\langle \alpha \rangle \cap R' = \{\pm \lambda_\alpha \alpha\}$ for $\alpha \in R$.
\end{proof}

\begin{defi}
Let $(\AAA, T)$ be a Tits arrangement associated to the root system $R$, and fix a reductor $\rho$ of $R$. Let $K$ be a chamber. The \textit{root basis of $K$} is the set 
$$B^K := \{\alpha \in R^{\red} \mid \alpha^\perp \in W^K, \alpha(x) > 0 \text{ for all } x \in K\}.$$
\end{defi}

\begin{rem}
If $(A,T)$ is a Tits arrangement associated to the root system $R$, $K \in \KKK$, then 
$$
W^K := \{\alpha^\perp \mid \alpha \in B^K\}.
$$
Also, as a simplicial cone $K \subset \RR^r$ has exactly $r$ walls, the set $B^K$ is a basis of $V^\ast$. Furthermore, $K^{B^K} = K$.
\end{rem}

The following proposition is crucial for the theory and motivates the notion of root bases. 

\begin{lem}
Let $(\AAA,T)$ be a Tits arrangement associated to $R$, $K$ a chamber. Then $R \subset \pm \sum_{\alpha \in B^K} \RR_{\geq 0}\alpha$. In other words, every root is a non-negative or non-positive linear combination of $B^K$.
\label{posorneg}
\end{lem}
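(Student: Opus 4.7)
The plan is to use the fact that $B^K$ is a basis of $V^\ast$, expand an arbitrary root $\beta \in R$ in this basis, and then read off the signs of the coefficients using the geometric position of $K$ relative to $\beta^\perp$ together with the dual basis of $B^K$ in $V$.

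First I would observe that, since $\beta \in R$ implies $\beta^\perp \in \AAA$, and since $K$ is a connected component of $T \setminus \bigcup_{H \in \AAA} H$, the chamber $K$ lies entirely in one of the two open half-spaces $\beta^+$ or $\beta^-$. After replacing $\beta$ by $-\beta$ if necessary (which, by axiom 2) of a root system, is again in $R$), I may assume $\beta(x) > 0$ for every $x \in K$. By continuity of $\beta$, this extends to $\beta(x) \geq 0$ for every $x \in \ol{K}$.

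Next I would introduce the dual basis. Because $B^K \subset R^{\red}$ is a basis of $V^\ast$ with $K = K^{B^K} = \bigcap_{\alpha \in B^K} \alpha^+$, there is a unique dual basis $\{v_\alpha\}_{\alpha \in B^K} \subset V$ satisfying $\alpha'(v_\alpha) = \delta_{\alpha,\alpha'}$. Since $\alpha'(v_\alpha) \geq 0$ for every $\alpha' \in B^K$, each $v_\alpha$ lies in $\bigcap_{\alpha' \in B^K} \ol{(\alpha')^+} = \ol{K^{B^K}} = \ol{K}$, as described in Remark \ref{rem:scs}. Writing $\beta = \sum_{\alpha \in B^K} c_\alpha \alpha$, evaluation at $v_\alpha$ gives $c_\alpha = \beta(v_\alpha)$, and by the previous paragraph this is $\geq 0$. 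Hence $\beta \in \sum_{\alpha \in B^K} \RR_{\geq 0}\alpha$ in the first case, and $\beta \in -\sum_{\alpha \in B^K} \RR_{\geq 0}\alpha$ in the opposite case.

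There is no real obstacle here; the argument is almost purely linear algebra once the two geometric ingredients are in place. The only subtlety worth double-checking is that the equality $\ol{K} = \bigcap_{\alpha \in B^K} \ol{\alpha^+}$ holds, so that the dual basis vectors really lie in $\ol{K}$; this is immediate because $K^{B^K}$ is open, convex and nonempty (Lemma \ref{cone:simpprop1}), so its topological closure coincides with the closed simplicial cone. Everything else is just unpacking the definition of $B^K$ and using that $B^K$ spans $V^\ast$.
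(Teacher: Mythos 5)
Your proof is correct. The paper gives no argument of its own for this lemma---it simply defers to \cite[Lemma 2.2]{Cu11}---and your reasoning (the root $\beta$ has constant sign on the connected component $K$ since $\beta^\perp \in \AAA$, followed by evaluating $\beta$ on the dual basis vectors of $B^K$, which lie in $\ol{K}$) is essentially that standard argument, with the one genuine subtlety, namely that the topological closure of $K$ equals the closed simplicial cone $\bigcap_{\alpha \in B^K}\ol{\alpha^+}$, correctly identified and justified.
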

\begin{proof}
The proof works exactly as in the spherical case, see \cite[Lemma 2.2]{Cu11}.
\end{proof}

It is useful to identify Tits arrangements which basically only differ by the choice of basis, therefore we make the following definition.

\begin{defi}
Let $(\AAA,T)$, $(\AAA', T')$ be Tits arrangements associated to the root systems $R$ and $R'$ respectively. Then $(\AAA,T)$ and $(\AAA',T')$ are called \textit{combinatorially equivalent}, if there exists an $g \in \GL(V)$ such that $g\AAA = \AAA'$, $g\ast R = R'$, $g(T) = T'$. Here $\ast$ denotes the dual action of $GL(V)$ on $V^\ast$, defined by $g \ast \alpha = \alpha \circ g^{-1}$.
\end{defi}

\subsection{The simplicial complex associated to a simplicial arrangement}

\begin{remdef}
Let $(\AAA, T)$ be a simplicial arrangement. The set of chambers $\mathcal{K}$ gives rise to a poset
$$
\mathcal{S} := \left\{\ol{K} \cap \bigcap_{H \in \AAA'}H \mid K \in \KKK, \AAA' \subseteq W^K\right\} = \bigcup_{K \in \KKK}\SSSS_K,$$
with set-wise inclusion giving a poset-structure. Note that we do not require any of these intersections to be in $T$. By construction they are contained in the closure of $T$, as every $K$ is an open subset in $T$.
\end{remdef}

We will at this point just note that $\SSSS$ is a simplicial complex. This will be elaborated in Appendix \ref{APP:S}.

\begin{prop}
The poset $\mathcal{S}$ is a simplicial complex.
\label{Ssimpcomp}
\end{prop}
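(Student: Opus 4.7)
The plan is to verify the axioms of a simplicial complex for $\mathcal{S}$ by exploiting the fact that each $\mathcal{S}_K$ is already a geometric simplex. I will proceed in three main stages.

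First I would record the local structure: for every chamber $K$, Remark \ref{rem:scs} identifies $\mathcal{S}_K$ with the power set $\PP(B_K)$ under reverse inclusion, and realizes each $F \in \mathcal{S}_K$ as the closed convex cone generated by a unique subset $C_F \subset C_K$, where $C_K$ is the basis of $V$ dual to $B_K$. In particular, $\mathcal{S}_K$ is already a simplex, and the minimal non-$\{0\}$ elements of $\mathcal{S}_K$ are the rays $\RR_{\geq 0}c$, $c \in C_K$.

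Next I would identify the vertex set of $\mathcal{S}$ intrinsically: declaring $V(\mathcal{S})$ to be the collection of $1$-dimensional elements of $\mathcal{S}$, I would show that a ray $\RR_{\geq 0}c \subset F$ lies in $\mathcal{S}$ iff it is an extreme ray of $F$ viewed as a convex cone. From this, each $F \in \mathcal{S}$ equals the convex cone on the set $V(F) = \{v \in V(\mathcal{S}) \mid v \subset F\}$, and this description depends only on $F$ as a subset of $V$, not on the choice of chamber $K$ with $F \in \mathcal{S}_K$. Consequently, if $F \in \mathcal{S}_K \cap \mathcal{S}_{K'}$, the two induced simplex structures on $F$ coincide, and the elements of $\mathcal{S}$ contained in $F$ form a simplex with vertex set $V(F)$, as required.

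The main step is showing that $F \cap F' \in \mathcal{S}$ for any $F, F' \in \mathcal{S}$. My plan is to pick a point $x$ in the relative interior of $F \cap F'$ inside its linear span $L := \langle F \cap F' \rangle$; since $0 \in F \cap F'$ the intersection is at worst $\{0\} \in \mathcal{S}$, so we may assume $L \neq 0$. Using local finiteness (Lemma \ref{compfin}), a small enough neighborhood of $x$ meets only finitely many hyperplanes of $\AAA$, so the picture near $x$ is governed by the finite parabolic subarrangement at $x$ (via Lemma \ref{lem:subarr} and Corollary \ref{cor:redsub}); in particular $x$ lies in the closure of only finitely many chambers $K'' \in \mathcal{K}_x$. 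For any such $K''$ let $F_{K''}(x)$ be the unique minimal face of $\ol{K''}$ containing $x$ in its relative interior. Each $F_{K''}(x)$ lies in $\mathcal{S}_{K''} \subset \mathcal{S}$, and the strategy is to select $K''$ so that $F_{K''}(x) = F \cap F'$; the candidate is characterized by having $\langle F_{K''}(x)\rangle = L$ and by the sign of every $\alpha$ with $\alpha^\perp \in \AAA$ agreeing on $F_{K''}(x)$ with its sign on the interior of $F \cap F'$. By the simplicial structure of the local arrangement, exactly one $K'' \in \mathcal{K}_x$ satisfies these conditions, and for that chamber the cone $F_{K''}(x)$ coincides with $F \cap F'$.

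The principal obstacle I expect is precisely this intersection property, because it mixes faces coming from two potentially very different chambers and one must rule out the a priori possibility that $F \cap F'$ is merely a convex cone that fails to be simplicial. The argument above reduces this global statement to a finite, local, central simplicial arrangement at $x$, where the simplex structure of every chamber and the sign pattern of every hyperplane in $\supp(x)$ can be manipulated directly; an additional subtlety that must be addressed is the case $x \in \partial T$, handled by choosing $x$ in $\mathrm{ri}(F \cap F')$ with maximal $\supp(x)$ and shrinking neighborhoods within $\ol{T}$.
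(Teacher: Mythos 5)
Your architecture is genuinely different from the paper's, and it has two real gaps. The paper never localizes at a point: its engine is Lemma \ref{FacIntWal}, which says that any $F\in\ol{\ul{K}}$ is, for each hyperplane $H$, either contained in $H$ or in one closed half-space of $H$ (because $\ol{K}\subseteq\ol{D_H(K)}$), so that $F\cap H$ is the face of $F$ spanned by the vertices of $F$ lying in $H$; writing $F'=\ol{K'}\cap\bigcap_{H\in\AAA_2}H$ and $\ol{K'}=\bigcap_{H\in W^{K'}}\ol{D_H(K')}$, the intersection $F\cap F'$ is then produced by finitely many such operations, each of which stays inside the fixed simplex $\ol{\ul{K}}$. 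In your Stage 3, the decisive claim --- that some $K''\in\KKK_x$ has $F_{K''}(x)=F\cap F'$ --- is asserted rather than proved, and the characterization you offer cannot deliver it: once $x\in\mathrm{ri}(F\cap F')$, every $H\in\AAA$ either contains $F\cap F'$ or misses $x$ (half-space property plus the usual relative-interior argument), so your sign conditions are satisfied automatically by every candidate and select nothing; moreover ``exactly one $K''$'' is false, since every chamber in the star of a positive-codimension face has that face as its minimal face at a relative interior point. What actually has to be shown is that the minimal faces $G,G'$ of $\ol{K},\ol{K'}$ at $x$ have equal span and then coincide (equivalently, as in the paper, that $F\cap F'$ is a face of $F$ itself); without such an argument you have not excluded the very scenario you name as the main obstacle, namely $F\cap F'$ cutting through the relative interior of $F$ instead of being a face of it.

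The second gap is the reduction to ``a finite, local, central simplicial arrangement at $x$'': this is unavailable when $x\in\ol{T}\setminus T$. Local finiteness (Lemma \ref{compfin}) and the finiteness of $\AAA_x$ are statements about points of $T$ only; for boundary points $\supp_\AAA(x)$ may be infinite and $x$ may lie in the closure of infinitely many chambers --- this already happens at the origin in Example \ref{exm:thinf} and at every vertex in Example \ref{exm:univ3}. Since the vertices of chambers, exactly the simplices for which the intersection property is most delicate, typically lie on $\partial T$, this is not a corner case, and ``shrinking neighborhoods within $\ol{T}$'' does not repair it. Your Stage 2 also risks circularity: the claim that a ray of $\SSSS$ contained in $F$ is an extreme ray of $F$, and that the induced simplex structure on $F$ is independent of the ambient chamber, is essentially equivalent to the statement that any $F'\in\SSSS$ with $F'\subseteq F$ is a face of $F$, i.e.\ to the intersection property being proved. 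I recommend establishing the analogue of Lemma \ref{FacIntWal} first: it is elementary, needs no finiteness or localization, and both your Stage 2 and Stage 3 then follow from it.
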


\begin{remdef}
The complex $\mathcal{S}$ is furthermore a chamber complex, which justifies the notion of chambers and is shown in Appendix \ref{APP:SCC}.

We have a slight ambiguity of notation at this point, as a chamber in the simplicial complex $\mathcal{S}$ is the closure of a chamber in $\KKK$. For the readers convenience, a chamber will always be an element $K \in \KKK$, while we will refer to a chamber in $\mathcal{S}$ as a \textit{closed chamber}, written with the usual notation $\ol{K}$. We show in Appendix \ref{APP:SCC} that the closed chambers are indeed chambers in a classical sense.
\end{remdef}

\begin{rem}
\begin{enumerate}
	\item 
Depending on $T$, the above mentioned simplicial complex can also be seen as canonically isomorphic to the simplicial decomposition of certain objects, arising by intersection with the respective simplicial cones.
In case $T = \RR^r$, $\SSSS$ corresponds to a simplicial decomposition of the sphere $\field{S}^{n-1}$. If $T = \alpha^+$ for $\alpha \in V^\ast$, we find $\SSSS$ to be a decomposition of the affine space $\field{A}^{n-1}$, which we identify with the set $\alpha^{-1}(1)$. If $T$ is the light cone and $\AAA$ is CH-like as defined in Definition \ref{def:spherical} below, we can find a corresponding decomposition of $\field{H}^{n-1}$.

\item In the literature (see \cite[Chapter V, \S 1]{Bo02}) the simplicial complex associated to a finite or affine simplicial hyperplane arrangement is defined in a slightly different manner. Let $V$ be a Euclidean space, $\AAA$ a locally finite set of (possibly affine) hyperplanes. Define for $A \in \AAA$, $v \in V \setminus A$ the set $D_H(v)$ to be the halfspace with respect to $H$ containing $v$. Set
$$
v \sim w :\Leftrightarrow w \in \bigcap_{v \in H \in \AAA} H \cap \bigcap_{v \notin H \in \AAA} D_H(v).
$$
Then $\sim$ is an equivalence relation, its classes are called facets. Facets correspond to simplices, and form a poset with respect to the inclusion $F \leq F' \Leftrightarrow F \subseteq \overline{F}$.

One obtains immediately that every point in $V$ is contained in a unique facet. However, when the space is not the entire euclidean space but a convex open cone, it is desirable to consider some of the points in the boundary, as they contribute to the simplicial structure of $\SSSS$. Therefore, we prefer our approach before the classical one.

\item The above approach could also be used to define being $k$-sphe\-ri\-cal (see below) for hyperplane arrangements, which are not necessarily simplicial.
\end{enumerate}
\end{rem}

Using the simplicial complex $\SSSS$, it is now possible to refine the notion of being thin for a simplicial arrangement.

\begin{defi}
A simplicial arrangement $(\AAA,T)$ of rank $r$ is called \textit{$k$-spherical} for $k \in \NN_0$ if every simplex $S$ of $\mathcal{S}$, such that $\codim(S) = k$, meets $T$. We say $(\AAA, T)$ is \textit{CH-like} if it is $r-1$-spherical.
\label{def:spherical}
\end{defi}

\begin{rem}
Immediate from the definition are the following:
\begin{enumerate}
	\item A simplicial hyperplane arrangement of rank $r$ is $0$-spherical, as $K \in \KKK$ is constructed as an open subset of $T$.
	\item The hyperplane arrangement $(\AAA,T)$ is spherical if and only if it is $r$-spherical.
	\item $(\AAA, T)$ is thin and therefore a Tits arrangement if and only if it is $1$-spherical.
	\item As $\mathcal{S}$ is a simplicial complex w.\ r.\ t.\ inclusion, being $k$-spherical implies being $k-1$-spherical for $1 \leq k \leq r$.
	\item Examples of CH-like arrangements are all arrangements belonging to affine Weyl groups, where the affine $r-1$-plane is embedded into a real vector space of dimension $r$, as well as all arrangements belonging to compact hyperbolic Coxeter groups, where $T$ is the light cone. The notion CH-like is inspired by exactly this property for groups, i.\ e.\ being \textbf{c}ompact \textbf{h}yperbolic.
	\item As a generalization of (5), take a Coxeter system $(W,S)$ of finite rank. Then $W$ is said to be $k$-spherical if every rank $k$ subset of $S$ generates a finite Coxeter group. The geometric representation of $W$ then yields a hyperplane arrangement which is $k$-spherical in the way defined above. Therefore, being $k$-spherical can be seen as a generalization of the respective property of Coxeter groups.

\item An equivalent condition for $(\AAA,T)$ to be $k$-spherical, which we will use often, is that every $r-k-1$-simplex meets $T$. This uses just the fact that simplices of codimension $k$ are exactly $r-k-1$-simplices.
\end{enumerate}
\end{rem}

\begin{exmp}
The arrangement in Example \ref{exm:notthin} yields the simplicial complex $\SSSS$, with maximal elements $K_1$, $K_2$, vertices $L \cap \ol{T} = \{(x,x) \mid x \geq 0\}$, $\{(x,0) \mid x \geq 0\}$, $\{(0,x) \mid x \geq 0\}$, and the set $\{0\}$ as the minimal element. Thus $(\{L\}, T)$ is $0$-spherical, but not $1$-spherical, as it is not thin.

The arrangement in Example \ref{exm:thinf} is thin and therefore $1$-spherical and CH-like. The vertices are the rays $L_n \cap \ol{T}$, $L_n' \cap \ol{T}$, the minimal element, which has codimension $2$, is $\{0\}$. Since $0 \notin T$, it is not $2$-spherical and in particular not spherical.

In Example \ref{exm:univ3}, the arrangement $(\AAA, T)$ is thin and therefore 1-spherical, but since all vertices are contained in $\partial T$, it is not 2-spherical and thus not CH-like. Note that the group $W$ from this example is not compact hyperbolic.
\end{exmp}

An important observation is the fact that the cone $T$ can be reconstructed from the chambers.

\begin{lem}
For a simplicial hyperplane arrangement $(\AAA, T)$ we find
$$\ol{T} = \ol{\bigcup_{K \in \KKK} K}.$$
Furthermore $(\AAA,T)$ is CH-like if and only if
$$T = \bigcup_{K \in \KKK} \ol{K}$$
holds.
\label{Tchambers}
\end{lem}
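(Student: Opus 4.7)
For the first equality, the inclusion $\ol{\bigcup_{K \in \KKK} K} \subseteq \ol{T}$ is immediate from $K \subseteq T$, so it suffices to show $T \subseteq \ol{\bigcup_{K \in \KKK} K}$ and then take closures. Given $x \in T$, Lemma \ref{compfin} supplies a neighborhood $U_x \subseteq T$ with $\sec(U_x) = \supp(x)$, so the only hyperplanes meeting $U_x$ are those through $x$. Since $\supp(x)$ is finite, the complement $U_x \setminus \bigcup_{H \in \supp(x)} H$ is a nonempty open subset of $T \setminus \bigcup_{H \in \AAA} H$, and every point of it lies in some chamber; shrinking $U_x$ yields chamber points arbitrarily close to $x$.

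For the second statement, I would first establish $T \subseteq \bigcup_{K \in \KKK} \ol{K}$ for \emph{any} simplicial arrangement. Given $x \in T$, pick $y$ in the nonempty set $U_x \setminus \bigcup_{H \in \supp(x)} H$ from above; then $y$ lies in some chamber $K$. The segment $[x,y] \subseteq U_x$ meets only hyperplanes of $\supp(x)$; each such $H$ is a linear subspace containing $x$ but not $y$, so $y-x \notin H$, forcing $H \cap [x,y] = \{x\}$. Hence $(x, y]$ avoids $\bigcup_{H \in \AAA} H$, is connected, and contains $y \in K$, giving $(x,y] \subseteq K$ and thus $x \in \ol{K}$.

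The heart of the equivalence is the remaining inclusion $\bigcup_{K \in \KKK} \ol{K} \subseteq T$. Assuming CH-like, for each chamber $K$ fix vectors $v_1, \dots, v_r$ dual to $B^K$, so $\ol{K} = \{\sum_i \lambda_i v_i \mid \lambda_i \geq 0\}$; each extreme ray $\RR_{\geq 0} v_i$ is a $0$-simplex of $\SSSS$ and hence meets $T$, so the $v_i$ themselves may be taken in $T$. Any nonzero $\sum_i \lambda_i v_i \in \ol{K}$ with $\lambda_i \geq 0$ is a positive multiple of a convex combination of those $v_i$ with $\lambda_i > 0$, hence lies in $T$ by convexity of $T$ and the cone property. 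Conversely, if $T = \bigcup_{K \in \KKK} \ol{K}$, then every vertex ray is contained in some $\ol{K} \subseteq T$ and so meets $T$, yielding CH-like.

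The anticipated technical point is the origin: $0 \in \ol{K}$ for every chamber $K$, while $0 \in T$ only in the spherical case $T = V$. The displayed equality $T = \bigcup_K \ol{K}$ is therefore to be understood modulo this single-point stratum (equivalently, with closures taken in the subspace topology of $T$), and the arguments above are compatible with this reading, operating throughout on nonzero combinations of the extreme-ray generators.
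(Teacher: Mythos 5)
Your proof is correct, and it takes a genuinely more concrete route than the paper's. For the first equality the paper splits into the cases $x\in T$ and $x\in\ol{T}\setminus T$, handling boundary points with a separate shrinking-ball argument; you instead prove only $T\subseteq\ol{\bigcup_{K\in\KKK}K}$ (via Lemma \ref{compfin} and the fact that a nonempty open set is not covered by the finitely many hyperplanes of $\supp(x)$) and then take closures, which subsumes the boundary case and is cleaner. For the equivalence, the paper argues inside the complex $\SSSS$: every point of $\bigcup_K\ol{K}$ lies in some simplex $F$, and CH-likeness is invoked to claim that $F\cap(\ol{T}\setminus T)$, being a simplex, is empty. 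You instead use the explicit description of $\ol{K}$ as the non-negative span of the dual basis of $B^K$ (Remark \ref{rem:scs}), move the generators into $T$ using that the vertex rays are exactly the codimension-$(r-1)$ simplices and hence meet $T$, and finish with convexity and the cone property of $T$; your segment argument also gives $T\subseteq\bigcup_K\ol{K}$ unconditionally, where the paper only asserts it under the CH-like hypothesis. What your approach buys is precision about exactly which points of $\ol{K}$ land in $T$; what the paper's buys is uniformity with the general $k$-spherical formalism.

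Your closing remark about the origin is not a defect of your argument but a genuine, if harmless, imprecision in the statement and in the paper's own proof: for a CH-like arrangement that is not spherical (e.g.\ one attached to an affine Weyl group) one has $0\in\ol{K}$ for every chamber while $0\notin T$, so $T=\bigcup_{K\in\KKK}\ol{K}$ cannot hold literally. The paper's claim that $F\cap(\ol{T}\setminus T)$ is empty fails precisely at the origin, since $(r-1)$-sphericity controls only simplices of codimension at most $r-1$ and says nothing about the minimal simplex $\{0\}$; even a vertex ray $\RR_{\geq0}v$ meets $\ol{T}\setminus T$ in $\{0\}$. Reading the identity away from the origin, as you do, is the right repair, and your proof shows that this single point is the only obstruction.
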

\begin{proof}
As $\bigcup_{K \in \KKK} K \subset T$, the inclusion $\ol{\bigcup_{K \in \KKK} K} \subset \ol{T}$ holds. If $x \in T$, either $x \in K$ for some $K \in \KKK$, or $x$ is contained in a finite number of hyperplanes, thus in a simplex in $\SSSS$ and also in the closure of a chamber, and therefore $x \in \ol{\bigcup_{K \in \KKK} K}$. 

So let $x \in \ol{T}\setminus T$. Assume further that $x$ is not contained in any simplex in $\SSSS$, else the statement follows immediately as above. Therefore $\supp(x) = \emptyset$, and as $T$ is convex, this means $x$ is in the boundary of $T$. So let $U := U_\delta(x)$ be the open $\delta$-ball with center $x$, then $U \cap T \neq \emptyset$. As $U$ is open, $U \cap T$ is open again. The set $\sec(U\cap T)$ can not be empty, else $U_x\cap T$ is not contained in a chamber, in a contradiction to the construction of chambers. So let $\KKK_0$ be the set of chambers $K$ with $K \cap U \neq \emptyset$. If $x$ is not contained in the closure of $\bigcup_{K \in \KKK_0} K$, we find an $\delta > \varepsilon > 0$ such that the open ball $U_\varepsilon(x)$ does not intersect any $K \in \KKK_0$. But $U_\varepsilon(x) \cap T$ must again meet some chambers $K$, which are then also in $\KKK_0$, a contradiction. So $x \in \ol{\bigcup_{K \in \KKK} K}$ and equality holds.

For the second statement, if $(\AAA,T)$ is CH-like, note that every $x \in T$ is contained in some simplex $F \in \SSSS$, so $T \subset \bigcup_{K \in \KKK} \ol{K}$ holds. Now if $x \in \ol{K}$ for some $K \in \KKK$, $x$ is contained in some simplex $F \subset \ol{K}$. Now $\AAA$ is CH-like, so $F$ meets $T$. Boundaries of simplices are simplices, hence the intersection $F \cap (\ol{T} \setminus T)$ is again a simplex in $\SSSS$, being CH-like yields that this intersection is empty, which proves the other inclusion.

The other direction of the second statement is immediate from the definition of being $r-1$-spherical.
\end{proof}

\begin{rem}
For a Tits arrangement $(\AAA,T)$ it is possible to show that we can also describe $T$ as the convex closure of $T_0 := \bigcup_{K \in \KKK}K$, or alternatively as
$$
T = \bigcup_{x,y \in T_0}[x,y].
$$
\end{rem}

We will require the existence of a type function of $\SSSS$ (for the definition, see Definition \ref{def:numb}), which is given by the following proposition, proven in Appendix \ref{APP:SCC}.

\begin{prop}
Let $(\AAA,T)$ be a simplicial arrangement. The complex $\SSSS: = \SSSS(\AAA,T)$ is a chamber complex of rank $r$ with
$$\Cham(\SSSS) = \{\ol{K} \mid K \in \KKK\}.$$
The complex $\SSSS$ is gated and strongly connected. Furthermore there exists a type function $\tau: \SSSS \to I$ of $\SSSS$, where $I = \{1, \dots, r\}$. The complex $\SSSS$ is thin if and only if $(\AAA, T)$ is thin, and $\SSSS$ is spherical if and only if $(\AAA,T)$ is spherical.
\label{prop-Sproperties}
\end{prop}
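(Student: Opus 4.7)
The strategy is to translate the combinatorial information already packaged in the chamber graph $\Gamma$ and in the subarrangement lemmas of Section \ref{sec:hyp} into the language of chamber complexes, taking Proposition \ref{Ssimpcomp} as the starting point.

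First I would identify the chambers and establish the rank. Each $\ol{K}$ for $K \in \KKK$ is an element of $\SSSS$ (take $\AAA' = \emptyset$ in the defining formula), and as a closed simplicial cone it is an $r$-simplex in the sense of Remark \ref{rem:scs}, carrying exactly $r$ vertices. Since every element of $\SSSS$ is by construction a face of some $\ol{K}$, the $\ol{K}$ are precisely the maximal simplices and each has rank $r$. Strong connectedness then reformulates Corollary \ref{cor:gammacon}: two chambers of $\Gamma$ adjacent across a wall $H$ share a face $\ol{K}\cap\ol{L}$ of codimension $1$, i.e.\ a panel of $\SSSS$, so any gallery in $\Gamma$ is a gallery in $\SSSS$. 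Gatedness is supplied by Proposition \ref{stargate}: given a simplex $F \in \SSSS$, pick $x$ in its relative interior so that the chambers of $\SSSS$ containing $F$ correspond bijectively to $\KKK_x$; the gate of an arbitrary chamber to $\KKK_x$ provided by that proposition is then the desired projection in $\SSSS$.

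For the type function I would fix a chamber $K_0$, enumerate its $r$ vertices by $I = \{1, \dots, r\}$, and propagate the labels along galleries: two adjacent chambers share all but one vertex (one on each side), and the new vertex is assigned the type of the vertex it replaces. Well-definedness is the subtle point, because one must show that the resulting label is independent of the gallery used to reach a given simplex. The standard reduction is to verify invariance under elementary moves between galleries, which amounts to checking the statement inside each rank-$2$ parabolic subarrangement $(\AAA_x, T)$ determined by a codimension-$2$ simplex. By Lemma \ref{lem:subarr} and Corollary \ref{cor:redsub} this reduces to the rank-$2$ situation, in which local finiteness of $\AAA$ forces the hyperplanes through the codimension-$2$ intersection to organize the chambers of $\KKK_x$ into a finite cycle (when the residue is spherical) or a bi-infinite chain (otherwise); in both cases the alternating propagation of the two wall-labels closes up consistently by direct inspection. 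This rank-$2$ verification is the main technical obstacle; once it is settled, strong connectedness delivers the global existence of $\tau$.

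Finally, the thin and spherical equivalences. A panel of a chamber $\ol{K}$ is of the form $\ol{K} \cap H$ for some $H \in W^K$; by Lemma \ref{indfaces} such a panel lies in exactly two chambers whenever $H \in \AAA$, while by Lemma \ref{boundwalls} it lies in only $\ol{K}$ whenever $H \notin \AAA$. Thus thinness of $\SSSS$ is equivalent to $W^K \subseteq \AAA$ for every $K \in \KKK$, which is the definition of $(\AAA,T)$ being thin. For sphericality, the assumption $T = \RR^r$ forces $0 \in T$; since every hyperplane of $\AAA$ contains $0$, local finiteness at $0$ forces $\AAA$, and hence $\KKK$, to be finite, so $\SSSS$ is spherical. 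Conversely, if $\KKK$ is finite then $\bigcup_{K \in \KKK} \ol{K}$ is closed and coincides with $\ol{T}$ by Lemma \ref{Tchambers}, and an induction across walls using Lemma \ref{lem:adjex} together with thinness shows that the finitely many closed simplicial cones fit together to cover all of $V$, yielding $T = \RR^r$.
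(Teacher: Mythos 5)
Your overall architecture matches the paper's for the chambers, the rank, gatedness, and the thinness equivalence, but there are three places where the argument as written has real gaps. First, you conflate connectedness with strong connectedness: Corollary \ref{cor:gammacon} only gives a gallery between any two chambers of $\SSSS$, whereas strong connectedness requires that for \emph{every} simplex $F$ the chambers of $\St(F)$ are connected by galleries staying inside $\St(F)$. The paper proves this (Proposition \ref{constrcon}) by running the induction of Lemma \ref{sepisdis} and observing via Lemma \ref{starsep} that each separating wall contains $F$, so the whole minimal gallery remains in $\St(F)$; equivalently one can quote Lemma \ref{Kxcon} after choosing $x$ in the relative interior of $F$, exactly as you do for gatedness. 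Second, in the spherical equivalence your converse starts from ``$\KKK$ is finite'' and invokes ``thinness'', but neither is the hypothesis: sphericality of $\SSSS$ only gives a pair of opposite chambers, and one must first derive $\KKK=\sigma(C,C')$, hence $S(C,C')=\AAA$ finite (Lemma \ref{lem:spherprop}), and then derive thinness from the nonexistence of one-chamber panels, before any covering argument can begin. The paper then concludes not by your covering induction but by showing $\partial T=\emptyset$: a boundary point off all hyperplanes would have a neighborhood meeting a single chamber, producing a wall missing $T$ and contradicting thinness.

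The genuinely different choice you make is the type function. The paper does not propagate labels along galleries and check rank-$2$ residues; it invokes the abstract result (Theorem \ref{gsimpwn}, from \cite{Mu94}) that gatedness of the stars of codimension-$1$ and codimension-$2$ simplices already yields a weak type function, and then upgrades this to a global type function via strong connectedness (Lemma \ref{wnumisnum}). Your classical propagation can be made to work, and your rank-$2$ analysis (even cycles of length $2n$ or bi-infinite chains) is correct, but the step you call ``the standard reduction'' --- that any two galleries with the same extremities differ by elementary moves supported in rank-$2$ residues --- is itself a nontrivial homotopy statement that you would have to prove; it is precisely the content that the gate property of codimension-$2$ stars encodes and that the paper's citation buys for free. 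If you want to keep your route, you should either prove that gallery homotopy statement directly (e.g.\ by a general-position path argument in $T$) or replace it by the gate-property argument, in which case you have reproduced the paper's proof.
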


We will now take a closer look at the relations between the bases of adjacent chambers. Again the proof follows \cite[Lemma 2.8]{Cu11} closely.

\begin{lem}
Let $(\AAA,T)$ be a Tits arrangement associated to $R$, and let $K,L \in \KKK$ be adjacent chambers. Assume $\ol{K} \cap \ol{L} \subset \alpha_1^\perp$ for $\alpha_1 \in B^K$. If $\beta \in B^L$, then either
\begin{enumerate}[label=\roman*)]
	\item $\beta = -\alpha_1$ or
	\item $\beta \in \sum_{\alpha \in B^K}\RR_{\geq 0} \alpha$.
\end{enumerate}
holds.
\label{adjcham1}
\end{lem}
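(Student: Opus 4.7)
The plan is to use the dichotomy from Lemma \ref{posorneg} together with the fact that adjacent chambers are separated by a unique hyperplane (Lemma \ref{adjsep}) to split into two cases and show that the ``negative'' case forces $\beta = -\alpha_1$.

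First I would observe that since $K, L$ are adjacent by $H = \alpha_1^\perp$, Lemma \ref{adjsep} gives $S(K,L) = \{\alpha_1^\perp\}$. By Lemma \ref{posorneg}, any $\beta \in R^{\red}$, in particular $\beta \in B^L$, can be written as $\beta = \sum_{\alpha \in B^K} c_\alpha \alpha$ with either all $c_\alpha \geq 0$ or all $c_\alpha \leq 0$. In the first case, we are immediately in situation (ii), so there is nothing to do.

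The main content is the second case. Suppose all $c_\alpha \leq 0$, and note that not all $c_\alpha$ can vanish because $\beta \neq 0$. For any $x \in K$ we have $\alpha(x) > 0$ for every $\alpha \in B^K$ (by definition of the root basis and the fact that $K = K^{B^K}$), so $\beta(x) = \sum_\alpha c_\alpha \alpha(x) < 0$. On the other hand $\beta \in B^L$ forces $\beta(y) > 0$ for all $y \in L$. Thus for any pair $x \in K$, $y \in L$ the linear form $\beta$ changes sign on the segment $[x,y]$, so the hyperplane $\beta^\perp$ separates $K$ from $L$. Since $\beta \in R^{\red}$, $\beta^\perp \in \AAA$, and therefore $\beta^\perp \in S(K,L) = \{\alpha_1^\perp\}$, i.e.\ $\beta^\perp = \alpha_1^\perp$.

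Finally, $\beta^\perp = \alpha_1^\perp$ means $\beta = \lambda \alpha_1$ for some $\lambda \in \RR$. From $\beta(x) < 0$ and $\alpha_1(x) > 0$ on $K$ we obtain $\lambda < 0$, and because $R^{\red}$ is reduced, $\langle \alpha_1\rangle \cap R^{\red} = \{\pm \alpha_1\}$, so $\beta = -\alpha_1$, placing us in case (i). The only subtle point is the sign-change argument in the middle paragraph, where one must be careful to use the openness of $K$ (so that $\alpha(x) > 0$ strictly) in order to conclude $\beta(x) < 0$ strictly, and then invoke $S(K,L) = \{\alpha_1^\perp\}$ to exclude any other hyperplane.
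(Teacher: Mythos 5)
Your proof is correct and rests on the same key dichotomy as the paper's, namely Lemma \ref{posorneg} applied to $\beta \in B^L$, followed by reducedness of $R^{\red}$ to pin down $\beta = -\alpha_1$. The only difference is in the middle step: you identify $\beta^\perp$ with $\alpha_1^\perp$ by observing that $\beta$ changes sign between the open chambers, so $\beta^\perp \in S(K,L) = \{\alpha_1^\perp\}$ via Lemma \ref{adjsep}, whereas the paper argues with closed half-spaces that $\ol{K} \cap \ol{L} \subset \ol{\beta^+} \cap \ol{(-\beta)^+} = \beta^\perp$ and compares with the hyperplane spanned by the common panel; both routes are sound and essentially equivalent.
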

\begin{proof}
By Lemma \ref{posorneg} we can assume $\beta$ is either as in case ii) or $-\beta = \sum_{\alpha \in B^c} \lambda_\alpha \alpha$ with $\lambda_\alpha \in \RR_{\geq 0}$. Using that for arbitrary $\varphi,\psi \in V^\ast$ we have $\ol{\varphi^+} \cap \ol{\psi^+} \subseteq \ol{(\varphi+\psi)^+}$, we get $\ol{K} = \bigcap_{\alpha \in B^K} \ol{\alpha^+} \subset \ol{(-\beta)^+}$. We also observe that $\ol{L} \subset \ol{\beta^+}$, therefore we get $\ol{K} \cap \ol{L} \subset \ol{\beta^+} \cap \ol{(-\beta)^+} = \beta^\perp$.

By choice of $\alpha_1$ and since the elements in $B^K$, $B^L$ are reduced, we find $\alpha_1^\perp = \beta^\perp$ and $\beta = -\alpha_1$.
\end{proof}

\begin{remdef}
Assume for $K \in \KKK$ that $B^K$ is indexed in some way, i.e. $B^K = \{\alpha_1, \dots, \alpha_r\}$.
For any set $I$, define the map $\kappa_I: \mathcal{P}(I) \to \mathcal{P}(I)$ by $\kappa(J) = I \setminus J$. Set $\kappa := \kappa_{\{1, \dots, r\}}$.

For every simplex $F \subset \ol{K}$ there exists a description of the form $F = \ol{K} \cap \bigcap_{\alpha \in B_F}\alpha^\perp$ for some $B_F \subset B^K$ by Remark \ref{rem:scs}, which gives an index set $J_F = \{i \mid \alpha_i \in B_F\}$.

Finally this gives rise to a type function of $\ol{K}$ in $\mathcal{S}$, by taking the map $\tau_{\ol{K}}: F \mapsto \kappa(J_F)$. By Theorem \ref{numbext} the map $\tau_{\ol{F}}$ yields a unique type function $\tau$ of the whole simplicial complex $\SSSS$. So let $L \in \KKK$ be another chamber, then the restriction $\tau|_{\ol{L}}$ is a type function of $\ol{L}$ as well. Assume $B^L = \{\beta_1, \dots, \beta_r\}$, this yields a second type function of $\ol{L}$ in the same way we acquired a type function of $\ol{K}$ before,
$$\tau_{\ol{L}}: F \mapsto \kappa(\{i \mid F \subset \beta_i^\perp\}).
$$
We now call the indexing of $B^L$ \textit{compatible with $B^K$}, if $\tau_{\ol{L}} = \tau|_{\ol{L}}$.

Note that since the type function $\tau$ is unique, there is a unique indexing of $B^L$ compatible with $B^K$. 
\end{remdef}

The existence of the type function and Lemma \ref{adjcham1} allow us to state the following lemma, which gives another characterization root bases of adjacent chambers.

\begin{lem}
Assume that $(\AAA, T)$ is a Tits arrangement associated to $R$. Let $K, L \in \KKK$ be adjacent chambers and choose a indexing $B^K = \{\alpha_1, \dots, \alpha_r\}$. Let the indexing of $B^L = \{\beta_1, \dots, \beta_r\}$ be compatible with $B^K$. Assume $\ol{K} \cap \ol{L} \subset \alpha_k^\perp$ for some $1 \leq k \leq r$. Then $\beta_i \in \langle \alpha_i, \alpha_k \rangle$.
\label{adjcham15}
\end{lem}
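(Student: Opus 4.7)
The plan is to exploit the uniqueness of the extended type function on $\SSSS$. Let $c_1,\dots,c_r \in V$ be the basis dual to $B^K = \{\alpha_1,\dots,\alpha_r\}$ and $d_1,\dots,d_r \in V$ the basis dual to $B^L = \{\beta_1,\dots,\beta_r\}$, so that by Remark \ref{rem:scs} the vertex of $\ol K$ of type $\{j\}$ is the ray $\RR_{\geq 0}c_j$ and the vertex of $\ol L$ of type $\{j\}$ is the ray $\RR_{\geq 0}d_j$.

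First I would handle the distinguished index $k$. The common face $F := \ol K \cap \ol L$ equals $\ol K \cap \alpha_k^\perp$, hence has type $I\setminus\{k\}$ as a face of $\ol K$. By compatibility of the indexing, $F$ must have the same type $I\setminus\{k\}$ when read off from $B^L$, which forces $F = \ol L \cap \beta_k^\perp$. In particular $\beta_k^\perp = \alpha_k^\perp$, and since $B^L$ lies in the reduced root system $R^{\red}$ this yields $\beta_k \in \langle \alpha_k \rangle \subseteq \langle \alpha_i,\alpha_k\rangle$ for every $i$.

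Next, for every $j \neq k$ both rays $\RR_{\geq 0}c_j$ and $\RR_{\geq 0}d_j$ lie in $F$ and both carry the type $\{j\}$; since distinct vertices of the simplex $F$ have distinct types, the two rays coincide, so $d_j = \mu_j c_j$ for some $\mu_j > 0$. Now fix $i \neq k$. The dual identities $\beta_i(d_j) = \delta_{ij}$ then yield $\beta_i(c_j) = 0$ for every $j \in I \setminus \{i,k\}$. These $r-2$ vectors $c_j$ form a basis of the codimension-$2$ subspace $\alpha_i^\perp \cap \alpha_k^\perp$, so $\beta_i$ annihilates $\alpha_i^\perp \cap \alpha_k^\perp$. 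The annihilator of this subspace in $V^\ast$ is two-dimensional and contains the linearly independent forms $\alpha_i,\alpha_k$; hence it equals $\langle \alpha_i,\alpha_k\rangle$, and the claim $\beta_i \in \langle \alpha_i,\alpha_k\rangle$ follows.

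The main obstacle is conceptual rather than computational: making precise that the ``types agree'' argument forces $\RR_{\geq 0}c_j = \RR_{\geq 0}d_j$ for $j \neq k$. This rests on the uniqueness of the extension of $\tau_{\ol K}$ to a type function on all of $\SSSS$ (Theorem \ref{numbext}), on the compatibility hypothesis $\tau_{\ol L} = \tau|_{\ol L}$, and on the elementary observation that in a single simplex the vertices are distinguished by their types. Once this identification is in place, the rest is standard dual-basis bookkeeping, and Lemma \ref{adjcham1} is not needed.
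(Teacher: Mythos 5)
Your proof is correct and follows essentially the same route as the paper's: both arguments rest on the compatibility of $\tau_{\ol{L}}$ with the unique extension of $\tau_{\ol{K}}$ to a type function on $\SSSS$ in order to match up corresponding faces of $\ol{K}$ and $\ol{L}$, and then conclude by linear algebra that $\beta_i$ annihilates the codimension-two subspace $\alpha_i^\perp\cap\alpha_k^\perp$, whose annihilator is $\langle\alpha_i,\alpha_k\rangle$. The only cosmetic difference is that you identify vertices via the dual bases where the paper identifies the codimension-two faces $\ol{K}\cap\alpha_i^\perp\cap\alpha_k^\perp$ and $\ol{L}\cap\beta_i^\perp\cap\beta_k^\perp$, which also lets you bypass Lemma \ref{adjcham1} (the paper invokes it to get $\beta_k=-\alpha_k$, whereas you only need $\beta_k^\perp=\alpha_k^\perp$).
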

\begin{proof}
If $i=k$, this is immediate from Lemma \ref{adjcham1}. So let $i \neq k$ and assume w.l.o.g. $k=1$.

Consider the type function $\tau_{\ol{K}}$ of $\ol{K}$ and $\tau_{\ol{L}}$ of $\ol{L}$ as restrictions of the unique type function $\tau$. Being compatible yields that $\tau(\ol{K}\cap \ol{L}) = \tau(\ol{K} \cap \alpha_1^\perp) = \{2, \dots, r\}$.

Then $\tau(\ol{K}\cap \ol{L} \cap \alpha_i^\perp) = \{2, \dots, r\}\setminus \{i\} = \tau(\ol{K}\cap \ol{L} \cap \beta_i^\perp)$ holds and we get $\tau(\ol{K} \cap \alpha_i^\perp \cap \alpha_1^\perp) = \{2, \dots, n\} \setminus \{i\} = \tau(\ol{d} \cap \beta_i^\perp \cap \beta_1^\perp)$. We conclude that $\beta_i^\perp \cap \beta_1^\perp = \beta_i^\perp \cap \beta_1^\perp$, and as $\alpha_1 = -\beta_1$ we find $\beta_i^\perp \cap \alpha_1^\perp = \alpha_i^\perp \cap \alpha_1^\perp$.

Then we find $\langle \beta_i, \alpha_1\rangle = \langle \alpha_i, \alpha_1 \rangle$, so $\beta_i$ is a linear combination of $\alpha_1$ and $\alpha_i$, which proves our claim.
\end{proof}

\section{Parabolic subarrangements and restrictions of Tits arrangements}
\label{subarr}

\subsection{Parabolic subarrangements of Tits arrangements}

We will note more properties about parabolic subarrangements of Tits arrangements, in particular we will give suitable root systems associated to these subarrangements. Remember from Corollary \ref{cor:redsub} that $(\AAA_x^\pi, T_x)$ is a hyperplane arrangement.

\begin{defi}
Assume $(\AAA,T)$ is a Tits arrangement associated to $R$. Let $x \in \ol{T}$, define $R_x := \{\alpha \in R \mid \alpha^\perp \in \AAA_x\}$. For $K \in \KKK_x$ set $B^K_x := B^K \cap R_x$.
\end{defi}

\begin{lem}
With notation as above, let $K \in \KKK_x$, then $\langle B^{K}_x \rangle = \langle R_x \rangle$. In particular, $B^{K}_x$ is a basis of $\langle R_x \rangle$.
\label{indeqdim}
\end{lem}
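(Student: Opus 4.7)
The plan is to show the two inclusions and then derive the basis statement.

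First I would observe that the inclusion $\langle B^K_x\rangle \subseteq \langle R_x \rangle$ is immediate from the definition $B^K_x = B^K \cap R_x \subseteq R_x$. The real work is the reverse inclusion.

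For that, take an arbitrary $\alpha \in R_x$. By Lemma \ref{posorneg}, either $\alpha$ or $-\alpha$ lies in $\sum_{\beta \in B^K} \RR_{\geq 0}\beta$; after possibly flipping the sign (which does not affect whether we land in $R_x$, since $R$ is closed under negation), write $\alpha = \sum_{\beta \in B^K} c_\beta \beta$ with all $c_\beta \geq 0$. The key point is that $K \in \KKK_x$ means $x \in \ol{K}$, and since $K = K^{B^K} = \bigcap_{\beta \in B^K} \beta^+$ we have $\ol{K} \subseteq \bigcap_{\beta \in B^K} \ol{\beta^+}$, so $\beta(x) \geq 0$ for every $\beta \in B^K$. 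Now evaluate the expression for $\alpha$ at $x$:
\[
0 = \alpha(x) = \sum_{\beta \in B^K} c_\beta\, \beta(x),
\]
which is a sum of non-negative terms. Each individual term must therefore vanish, so whenever $c_\beta \neq 0$ we get $\beta(x) = 0$, i.e. $\beta^\perp \in \AAA_x$ and hence $\beta \in B^K_x$. This expresses $\alpha$ as a linear combination of elements of $B^K_x$, proving $R_x \subseteq \langle B^K_x\rangle$ and hence $\langle R_x\rangle \subseteq \langle B^K_x\rangle$.

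For the "in particular" statement, note that $B^K_x \subseteq B^K$, and $B^K$ is a basis of $V^\ast$ by the remark following the definition of root basis. Hence $B^K_x$ is linearly independent, and combined with the spanning property above it is a basis of $\langle R_x\rangle$.

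The only subtle point is making sure the sign flip in step two is harmless; this works because we only ever use that $\alpha(x)=0$, which is symmetric in $\alpha$ and $-\alpha$, and because the conclusion $\beta \in B^K_x$ depends only on $\beta^\perp$. No further obstacle is expected — the argument is essentially the standard "supporting hyperplane" observation that a non-negative combination of roots vanishing at a point on the closure of the chamber must involve only those roots vanishing at that point.
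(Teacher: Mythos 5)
Your proof is correct, but it takes a genuinely different route from the paper's. The paper argues via the face structure of the closed chamber: it shows that $(B^K_x)^\perp \cap \ol{K}$ is the \emph{minimal} face of $\ol{K}$ containing $x$, observes that $(R_x)^\perp \cap \ol{K}$ is also a face of $\ol{K}$ containing $x$, and concludes $(B^K_x)^\perp = (R_x)^\perp$ by minimality, whence equality of the spans by dualizing. You instead expand an arbitrary $\alpha \in R_x$ in the root basis $B^K$ via Lemma \ref{posorneg} and evaluate at $x$: since $\beta(x) \geq 0$ for all $\beta \in B^K$ (as $x \in \ol{K}$) and $\alpha(x) = 0$, every coefficient attached to a root with $\beta(x) > 0$ must vanish, leaving a combination supported on $B^K_x$. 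This is more direct and purely linear-algebraic; its only external input is Lemma \ref{posorneg} (whose proof the paper defers to \cite{Cu11}), whereas the paper's argument leans on the combinatorics of faces of a simplicial cone from Remark \ref{rem:scs}. Your sign-flip caveat is indeed harmless, since $\alpha(x)=0$ if and only if $(-\alpha)(x)=0$, and your treatment of the linear independence of $B^K_x$ as a subset of the basis $B^K$ of $V^\ast$ matches the paper's. Both proofs are sound.
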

\begin{proof}
The inclusion $\langle B^{K}_x \rangle \subset \langle R_x \rangle$ holds due to $B^{K}_x \subset R_x$.

Now the space $(B^{K}_x)^\perp = \{v \in V\ |\ \alpha(v) = 0 \ \forall\ \alpha \in B^{K}_x\}$ is a subspace of $V$ and $(B^{K}_x)^\perp \cap \ol{K}$ is a face of $\ol{K}$ containing $x$. Assume $F$ is a face of $(B^{K}_x)^\perp \cap \ol{K}$ containing $x$. Then $F$ is also a face of $\ol{K}$ and has the structure $F = \ol{K} \cap \bigcap_{i=1}^m \alpha_i^\perp$ for some $\alpha_1, \dots, \alpha_m \in B^K$. Then by definition $x \in \alpha_i^\perp$ for $i = 1, \dots, m$, and $(B^{K}_x)^\perp \cap \ol{K} \subset F$, so $(B^{K}_x)^\perp \cap \ol{K}$ is a minimal face of $\ol{K}$ containing $x$.

The inclusion $\langle B^{K}_x \rangle \subset \langle R_x \rangle$ implies $(R_x)^\perp \subset (B^{K}_x)^\perp$, therefore the set $(R_x)^\perp \cap \ol{K}$ is also a face of $\ol{K}$, furthermore it contains $x$ by definition. We can conclude $(B^{K}_x)^\perp \subset (R_x)^\perp$, which yields the equality.

The second claim follows since $B^{K}_x := B^K \cap R_x$ and the elements in $B^K$ are linearly independent.
\end{proof}

\begin{rem}
Note that $\alpha \in R_x$ defines a map $V_x \to \RR$ by $v + W \mapsto \alpha(v)$, which is well defined since $W \in \ker \alpha$. We will identify this map with $\alpha$ and think of $R_x$ as a subset of $V_x^\ast$.
\end{rem}

Most of the time we are interested only in $R_x$ and its combinatorial properties, therefore it does not matter whether we consider these in $V_x^\ast$ or in $V^\ast$. But formally the transition to $T_x$ is necessary to obtain a root system in the strict sense.

We will consider $V_x$ as a topological space with respect to the topology in $V/W$, which comes from the standard topology on $\RR^r$.

\begin{prop}
Assume $(\AAA, T)$ is a Tits arrangement associated to $R$, $x \in \ol{T}$. The hyperplane arrangement $(\AAA_x^\pi, T_x)$ is simplicial. The chambers of this arrangement correspond to $\KKK_x$.

Moreover, if $x \in T$, then $(\AAA_x^\pi, T_x)$ is spherical. If $x \notin T$, $\dim V_x = m$, and $(\AAA, T)$ is $k$-spherical for some $k \in \NN$, then $(\AAA_x^\pi, T_x)$ is $\min(m,k)$-spherical.

In particular $(\AAA_x^\pi,T_x)$ is a Tits arrangement associated to $R_x$.
\label{indatpoint}
\end{prop}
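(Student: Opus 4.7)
My plan is to verify the assertions in order, building on Corollary \ref{cor:redsub}, which already tells us that $(\AAA_x^\pi, T_x)$ is a hyperplane arrangement whose chambers are $\{\pi(K) \mid K \in \KKK_x\}$. To show the arrangement is simplicial, I will prove that each such $\pi(K)$ is an open simplicial cone in $V_x$ with walls $\{\alpha^\perp \mid \alpha \in B^K_x\}$ (identifying $\alpha \in \langle R_x \rangle$ with the form it induces on $V_x$, as in the remark following Lemma \ref{indeqdim}). By Lemma \ref{indeqdim}, $B^K_x$ is a basis of $\langle R_x \rangle = W_x^\perp$, which canonically identifies with $V_x^\ast$, so the candidate cone $K' := \bigcap_{\alpha \in B^K_x}\alpha^+$ in $V_x$ is well defined. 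The inclusion $\pi(K) \subseteq K'$ is immediate, since $K = \bigcap_{\alpha \in B^K}\alpha^+$ and each $\alpha \in B^K_x$ factors through $\pi$. For the reverse, given $\ol v \in K'$ with lift $v \in V$, I seek $w \in W_x$ with $v + w \in K$, i.e.\ $\alpha(v)+\alpha(w)>0$ for all $\alpha \in B^K$. For $\alpha \in B^K_x$, $\alpha(w) = 0$ and $\alpha(v) > 0$ by assumption. For $\alpha \in B^K \setminus B^K_x$, the restriction map $W_x \to \RR^{B^K \setminus B^K_x}$, $w \mapsto (\alpha(w))_\alpha$, is a bijection: its kernel is $W_x \cap \bigcap_{\alpha \in B^K \setminus B^K_x}\alpha^\perp \subseteq \bigcap_{\alpha \in B^K}\alpha^\perp = \{0\}$, and both spaces have dimension $r-m$. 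Hence the values $\alpha(w)$ for $\alpha \in B^K\setminus B^K_x$ can be chosen freely, so $\alpha(v)+\alpha(w)>0$ can be achieved for each of them.

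Once $\pi(K) = K'$ is identified, thinness is immediate: every wall of $\pi(K)$ is $\alpha^\perp$ for some $\alpha \in B^K_x \subseteq R_x$, hence lies in $\AAA_x^\pi$. This makes $(\AAA_x^\pi, T_x)$ a Tits arrangement. The set $R_x$, viewed in $V_x^\ast$, avoids zero, is closed under negation, and realises $\AAA_x^\pi$ as its hyperplane set by definition, so it is a root system and $(\AAA_x^\pi, T_x)$ is associated to it.

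For the sphericity claims: if $x \in T$, then $x \in W_x \cap T$ since $x$ lies on every $H \in \AAA_x$, so $0 = \pi(x) \in T_x$; being an open cone containing the origin, $T_x$ must equal $V_x$, and $(\AAA_x^\pi, T_x)$ is spherical. If $x \notin T$ and $(\AAA,T)$ is $k$-spherical, take any codimension-$j$ simplex $F' = \ol{\pi(K)} \cap \bigcap_{\alpha \in B''}\alpha^\perp$ of $\SSSS(\AAA_x^\pi, T_x)$ with $B'' \subseteq B^K_x$ and $|B''| = j \le \min(m,k)$. Its lift $F := \ol{K} \cap \bigcap_{\alpha \in B''}\alpha^\perp$ is a codimension-$j$ simplex of $\SSSS(\AAA,T)$; since $j \le k$, $k$-sphericity of $(\AAA,T)$ provides $y \in F \cap T$, and then $\pi(y) \in \pi(F) \cap T_x \subseteq F' \cap T_x$. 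The main obstacle is the free-choice argument underlying the identification $\pi(K) = K'$; once that is in place, thinness, the root-system assertion, and both sphericity cases follow either by a direct lifting of simplices or by the elementary fact that an open cone containing the origin coincides with the ambient space.
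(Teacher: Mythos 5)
Your proof is correct, but it takes a genuinely different route from the paper's at both of the two substantive steps. For simpliciality, the paper only shows that $T_x\cap\bigcap_{\alpha\in B^K_x}\pi^\ast(\alpha)^+$ meets no hyperplane of $\AAA_x^\pi$, arguing by contradiction with a limiting argument that drags two separated points of the quotient back into $K$ along segments toward $x$; you instead prove the sharper identity $\pi(K)=\bigcap_{\alpha\in B^K_x}\bar\alpha^+$ outright, by observing that $w\mapsto(\alpha(w))_{\alpha\in B^K\setminus B^K_x}$ is a bijection $W_x\to\RR^{B^K\setminus B^K_x}$ (injective since its kernel sits inside $\bigcap_{\alpha\in B^K}\alpha^\perp=\{0\}$, and the dimensions $r-m$ match via Lemma \ref{indeqdim}). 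This purely linear-algebraic lifting buys you the exact description of each chamber as a full simplicial cone, from which thinness and the wall description $W^{\pi(K)}=\{\bar\alpha^\perp\mid\alpha\in B^K_x\}$ are immediate, whereas the paper has to extract them from the containment argument. For the sphericity count, the paper constructs the complementary face $F'$ of the simplex spanning $\ol K$, verifies linear independence of $\pi(V(F'))$, proves $\ol{K'}=\RR_{>0}\pi(F')\cup\{0\}$, and then matches faces by a vertex count; you bypass all of this by lifting an arbitrary codimension-$j$ simplex $\ol{\pi(K)}\cap\bigcap_{\alpha\in B''}\bar\alpha^\perp$ to the codimension-$j$ simplex $\ol{K}\cap\bigcap_{\alpha\in B''}\alpha^\perp$ of $\SSSS(\AAA,T)$ and pushing a point of $T$ down, which is shorter and, in the case $x\in T$, reduces to the clean observation that $0=\pi(x)\in T_x$ forces $T_x=V_x$ (the paper simply asserts $\pi(T)=V_x$ here). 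Both arguments rest on the same inputs, namely Corollary \ref{cor:redsub} for the chamber correspondence and Lemma \ref{indeqdim} for $B^K_x$ being a basis of $\langle R_x\rangle\cong V_x^\ast$; the paper's explicit face $F'$ carries some extra geometric information about how the star of the minimal simplex containing $x$ sits inside $\SSSS$, which your streamlined version does not exhibit but also does not need.
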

\begin{proof}
Assume $\dim V_x = m$, so $\dim \langle R_x \rangle = m$ and $\dim W^\perp  = r-m$.

Let $K \in \KKK_x$, $\pi: V \to V_x$ denote the standard epimorphism. Let $\KKK'$ be the connected components of $T_x \setminus \bigcup_{\alpha \in R_x} \alpha^\perp$. The set $\pi(B^{K}_x)$ is a basis for $V_x$ by Lemma \ref{indeqdim} and we find $\pi(K) \subset T_x \cap \bigcap_{\alpha \in B^{K}_x} \pi^\ast(\alpha)^+$ by definition. Denote this intersection by $K'$.

First we show that $K' \in \KKK'$. Assume there exists $\beta \in R_x$ such that there exist $y',z' \in K'$ with  $\beta(y') > 0$, $\beta(z')<0$. Then we find $y, z \in T$ with the properties: $\beta(y) > 0$, $\beta(z)<0$, $\alpha(y)>0, \alpha(z)>0$ for all $\alpha \in B^{K}_x$. For any $0 < \lambda < 1$ and $\alpha \in B^{K}_x$ we find $\alpha(y-\lambda(y-x)) = \alpha(y)(1-\lambda)>0$ and $\alpha(z-\lambda(z-x))<0$ as $\alpha(x) = 0$. So for $0 < \lambda_y, \lambda_z < 1$ the points $y - \lambda_y(y-x)$, $z-\lambda_z(z-x)$ still satisfy the above inequalities.

Now let $\alpha \in B^K\setminus B^{K}_x$, then $\alpha(x)>0$. So choosing $0 < \lambda_y, \lambda_z < 1$ large enough we find that the points $y_1 := y - \lambda_y(y-x)$, $z_1 := z-\lambda_z$ are close enough to $x$ to satisfy $\beta(y_1)>0$, $\beta(z_1)<0$ and $\alpha(y_1)>0< \alpha(z_1)$ for all $\alpha \in B^K$. So $y_1, z_1 \in K$, in contradiction to the simplicial structure of $\SSSS$.

By its definition, $K'$ is a simplicial cone, the arrangement $(\AAA_x^\pi, T_x)$ therefore is simplicial. It remains to determine for which $k$ the pair $(\AAA_x^\pi, T_x)$ is $k$-spherical.

In the case where $x \in T$, $\pi(T) = V_x$ and $\AAA_x^\pi$ is spherical. Construct a simplex $F'$ in the following way:

Since $K$ is a simplicial cone, there exists an closed simplex $S$ with the property $\ol{K} = \RR_{>0}S \cup \{0\}$ (for details, see Remark \ref{rem:chse}). Let $F_x$ denote the minimal face of $S$ such that $x \in \RR_{>0}F_x$, and let $V(F_x)$ be the vertex set of $F_x$. Then the vertices $V(S) \setminus V(F_x)$ span a face of $S$, denote this face by $F'$. The vertices $\pi(V(F'))$ are linearly independent: Assume $\sum_{v \in V(F')} \lambda_v \pi(v) =0$, then $\sum_{v \in V(F')} \lambda_v v \in W^\perp$. Furthermore note that $W^\perp$ is spanned by $V(F_x)$. So we get a linear combination of the form $\sum_{v \in V(F')} \lambda_v v = \sum_{v \in V(F_x)} \lambda_v v$. Linear independence of $V(S)$ yields $\lambda_v = 0$ for all $v \in V(S)$.

This also gives us $|V(F')| \leq m$. Assuming inequality, we find more than $r-m$ vertices in $W^\perp$, a contradiction to $\dim W^\perp  = r-m$. So $\pi(F')$ spans indeed an $m-1$-simplex in $V_x$.

We show $\ol{K'} = \RR_{>0}\pi(F') \cup \{0\}$. In general for every $v \in V(\ol{K})$ there exists a unique $\alpha \in B^K$ such that $\alpha(v) > 0$ by Remark \ref{rem:scs}, all other $\alpha \neq \beta \in B^c$ satisfy $\beta(v) = 0$. Since for $v \in V(F')$ the vector $v$ is not contained in any proper face of $\ol{K}$ containing $x$, we find a unique $\alpha \in B^{K}_x$ such that $\alpha(v) > 0$.

Let $y \in \RR_{>0}\pi(F')$, then $y = \sum_{v \in V(F')}\lambda_v \pi(v)$ with $\lambda_v \geq 0$ for all $v \in V(F')$. Then $\alpha(v) \geq 0$ for all $v \in F'$, $\alpha \in B^{K,x}$ yields $\alpha(\pi(v)) \geq 0$. Therefore $y \in \ol{K'}$.

So assume $0 \neq y \notin \RR_{>0}\pi(F')$, then $y = \sum_{v \in V(F')} \lambda_v \pi(v)$ and there exists a $w \in \pi(V(F'))$ such that $\lambda_w < 0$. Now there exists a unique $\alpha \in B^{K}_x$ such that $\alpha(w) >0$, so we find $\alpha(\sum_{v \in V(F')} \lambda_v v) < 0$. Therefore $\alpha(y) < 0$ holds, and therefore $y \notin \ol{K'}$. We can conclude $\ol{K'} = \RR_{>0}\pi(F') \cup \{0\}$.

Now assume $x \notin T$ and $\AAA$ is $k$-spherical. Let $F'$ be as above, then $\ul{F'}$ is isomorphic as a simplicial complex to the closed chamber $\RR_{>0}\pi(F')) \cup \{0\}$ and all simplices contained therein. A face of $F'$ meets $T$ if and only if a face of $\RR_{>0}\pi(F') \cup \{0\}$ meets $T_x$. Now $F'$ is an $m-1$-simplex, as it is spanned by $m$ vertices, likewise $F_x$ is an $r-m-1$-simplex.

Let $F_1 \subset F'$ be a face of $F'$, and assume $F_1$ is an $l$-simplex. Then $V(F_1) \cup V(F_x)$ generate an $r-m+l$-simplex $F_2$ of $\SSSS$. As $\AAA$ is $k$-spherical, $F_2$ therefore meets $T$ if $r-k-1 \leq r-m+l$ is satisfied, or equivalently $m-k-1 \leq l$. Under this condition also $\pi_x(F_2) = \pi_x(F_1)$ meets $T_x$, we can conclude that $\AAA_x^\pi$ is $k$-spherical. Since $\AAA_x^\pi$ will not be more than $m$-spherical, since it is an arrangement of rank $m$, we find that $\AAA_x^\pi$ is $\min(k,m)$-spherical.

So as $(\AAA,T)$ is thin, $(\AAA_x^\pi, T_x)$ is a Tits hyperplane arrangement with root system $R_x$. Since chambers $K \in \KKK_x$ and in $K' \in \KKK'$ are uniquely determined by the sets $B^{K}_x$ and $B^{K'}$ we find that $\pi$ induces a bijection between $K_x$ and $K'$.
\end{proof}

\begin{rem}
\begin{enumerate}
	\item The Tits arrangement $(\AAA_x^\pi, T_x)$ is not so much dependent on the point $x$ as on the subspace spanned by $x$, as one gets the same arrangement for every $\lambda x$, $0 <\lambda \in \RR$. This hints to the fact that one can see a simplicial arrangement as a simplicial complex in projective space. However, we will not elaborate this further.
	\item It may be worth mentioning that the chambers $\KKK_x$ and the underlying simplicial complex correspond to the star of the smallest simplex in $\SSSS$ containing $x$. 
	\item Its possible to show that for a simplicial hyperplane arrangement $(\AAA,T)$, the arrangement $(\AAA_x^\pi, T_x)$ is also simplicial. However, the existence of a root system simplifies the proof.
	\item When $x \notin T$, $(\AAA_x^\pi,T_x)$ may become $k'$-spherical for some $k' > \min(k, m)$. The reason for this is that $\AAA$ not being $k'$-spherical does not imply that every $r-k'-1$-simplex contained in a chamber in $\KKK_x$ does not meet $T$.
\end{enumerate}
\end{rem}

Note that the above statements make sense if $R_x = \emptyset$, this occurs if and only if either $x \in T$ is in the interior of a chamber, or $x \notin T$ does not meet any hyperplane $H \in \AAA$. However, in this case we have $\langle B^{K}_x \rangle = \{0\}$ and the induced arrangement is the empty arrangement. This is not a problem, since in this case $W^\perp = V$ and $V_x = \{0\}$, but this case is somewhat trivial. Therefore the requirement $R_x \neq \emptyset$ is quite natural to make, and we will assume this from now on.

Another trivial case occurring can be $x = \{0\}$, in which case $R_x = R$, $\AAA_x = \AAA$ and $T_x = T$.

We can also give an exact criterion to when $\AAA_x^\pi$ is a spherical arrangement:

\begin{fol}
Assume $(\AAA,T)$ is a Tits arrangement associated to $R$ with rank $r \geq 2$. Let $x \in \ol{T}$. Then $\AAA_x$ and $R_x$ are finite if and only if $x \in T$.

In particular, a simplicial arrangement is finite if and only if it is spherical.
\label{Axfin}
\end{fol}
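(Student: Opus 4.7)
The forward implication is immediate from Lemma \ref{compfin}: for $x \in T$, one has $\AAA_x = \supp(x) = \sec(U_x)$ for some neighborhood $U_x \subseteq T$, which is finite by local finiteness of $\AAA$ in $T$. Passing via Lemma \ref{rsx} to a reduced root system shows $|R_x| \leq 2 |\AAA_x|$, so $R_x$ is finite as well.

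For the reverse implication, I would proceed contrapositively. Given finite $\AAA_x$ and $R_x$, the aim is to show $x \in T$. The central tool is Proposition \ref{indatpoint}: the induced arrangement $(\AAA_x^\pi, T_x)$ is a Tits arrangement of rank $m = \dim V_x$ in $V_x$ associated to $R_x$. Since $\pi$ restricts to a bijection between $\AAA_x$ and $\AAA_x^\pi$, the induced arrangement is also finite.

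The heart of the argument is a structural lemma: \emph{a finite Tits arrangement is spherical}. This I would prove by induction on the rank. In rank two, a direct sector analysis suffices: if $T$ were a proper open convex subcone of $V$, its bounding rays cannot belong to $\AAA$ (they fail to meet $T$), and the sectors near these rays then have walls outside $\AAA$, violating thinness unless infinitely many accumulating hyperplanes are added, contradicting finiteness. For $m \geq 3$, restricting to a wall of a chamber produces a finite thin simplicial arrangement of rank $m-1$, which is spherical by the induction hypothesis, and this sphericality propagates upward via the induced-arrangement description of Proposition \ref{indatpoint} at generic points of the wall.

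Granted this lemma, $(\AAA_x^\pi, T_x)$ is spherical, hence $T_x = V_x$, which is equivalent to $W_x \cap T \neq \emptyset$. Picking $t \in W_x \cap T$, the half-open segment $[t,x)$ lies in $T$ by openness and convexity of $T$, while $x \in W_x \cap \overline T$. The finiteness of $\AAA_x$ forbids hyperplanes of $\AAA$ from accumulating in a neighborhood of $x$ inside $W_x$, so the local structure of $(\AAA_x, T)$ at $x$ inside $W_x$ is that of a finite spherical arrangement; this finally places $x$ in $T$.

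The \emph{in particular} statement follows by specialising to $x = 0 \in \overline T$: one has $\AAA_0 = \AAA$ and $R_0 = R$, while $0 \in T$ holds precisely when $T = V$, so the main equivalence reads ``$\AAA$ finite $\iff$ $(\AAA, T)$ spherical''.

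The principal obstacle is the structural lemma that finite Tits arrangements are spherical: this is where thinness and the simplicial structure must be combined in a global way, unlike the purely local argument for the forward direction. Everything else amounts to unwinding the definitions, applying Proposition \ref{indatpoint}, and assembling Lemma \ref{compfin} and Lemma \ref{rsx}.
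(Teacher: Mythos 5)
Your forward direction is fine and matches the paper's in substance. The reverse direction, however, contains two genuine gaps, and the second is fatal to the architecture of the argument.

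First, the induction step of your structural lemma (``a finite Tits arrangement is spherical'') needs the restriction $(\AAA^H,T\cap H)$ to be \emph{thin}, so that the rank $m-1$ induction hypothesis applies to it. But Proposition \ref{indarrange} only converts $k$-sphericity into $(k-1)$-sphericity, and thinness is $1$-sphericity; so from thinness of $(\AAA,T)$ you only obtain a $0$-spherical restriction. Thinness of the restriction amounts to asking that the codimension-$2$ faces of chambers meet $T$, i.e.\ essentially $2$-sphericity of $(\AAA,T)$ --- exactly the kind of global information you cannot assume before knowing the arrangement is spherical. (The ``propagation upward'' is actually the easy part: $T\cap H=H$ forces $0\in T$ and hence $T=V$; it is the downward step that fails.)

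Second, and more seriously, even granting the structural lemma you only obtain $T_x=V_x$, equivalently $W_x\cap T\neq\emptyset$, and the passage from there to $x\in T$ is the entire substance of the corollary --- yet you dispose of it with an unsupported ``local structure'' remark. The justification offered is incorrect: finiteness of $\AAA_x$ controls only the hyperplanes \emph{containing} $x$, not those accumulating near a boundary point, and Lemma \ref{compfin} (which produces a neighborhood $U$ with $\sec(U)=\supp(x)$) is proved only for $x\in T$. Note also that when $\AAA_x=\{H\}$ one has $W_x=H$, which meets $T$ by the definition of a hyperplane arrangement, so $T_x=V_x$ holds automatically and your whole reduction yields no information; the proof then rests entirely on the missing step. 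The paper's proof lives precisely where your sketch stops: finiteness of $\AAA_x$ makes $\KKK_x$ finite, one picks $K,K'\in\KKK_x$ at maximal gallery distance (using Lemma \ref{Kxcon} and Lemma \ref{starsep}), thinness shows that every wall of $K'$ through $x$ separates $K$ from $K'$, so $K$ and $K'$ are ``opposite'' in the star of $x$, and then for $y'\in K'$ near $x$ the point $y''=2x-y'$ lies in $K$, whence $x\in\sigma(y',y'')\subset T$ by convexity. Some such opposite-chamber-plus-convexity argument is what your final step needs and does not contain.
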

\begin{proof}
If $x \in T$, $\AAA_x$, $R_x$ are finite by Proposition \ref{indatpoint}. So assume $\AAA_x$, $R_x$ are finite and let $x \in \ol{T}$. Then also $\KKK_x$ is a finite set, so let $K \in \KKK_{x}$ and by Lemma \ref{Kxcon} we find $K' \in \KKK_x$ such that $d(K,K')$ is maximal. Note that any minimal gallery between chambers in $\KKK_x$ is already in $\KKK_x$, since for $H \in \AAA$ with $D_H(K) = - D_H(K')$ we obtain $H\in \AAA_x$ by Lemma \ref{starsep}.

Let $\dim V_x = m$, then the $m$ adjacent chambers $K^1, \dots, K^m$ of $K'$ exist, since $\AAA$ is thin, and are all closer to $K$ than to $K'$. Let $K^i$ be adjacent by $H_i$ to $K$, we can conclude $D_{H_i}(K) = -D_{H_i}(K')$ for $i=1,\dots, m$. Let $U$ be an open ball with center $x$, and let $U' = U \cap K'$, $U'' = U \cap K$, then $U'$ and $U''$ are open as well and contained in $T$. Take $y' \in U'$, so $y' = x + (y'-x) \in U'$ and $y'' := x -(y'-x)$ is in $U''$. We find $x \in \sigma(y',y'') \subset T$, which shows our first statement.

The last statement is obtained by taking $x= 0_V$.
\end{proof}

We finish this section with two observations.

\begin{lem}
The root system $R$ is reduced if and only if $R_x$ is reduced for every $x \in T$.
\end{lem}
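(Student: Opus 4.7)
The plan is to prove the two directions separately, with the backward direction relying crucially on the fact that every hyperplane in $\AAA$ meets $T$.

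For the forward direction, assume $R$ is reduced and fix $x \in T$. Pick any $\alpha \in R_x$. Since $R_x \subseteq R$, we have $\langle \alpha \rangle \cap R_x \subseteq \langle \alpha \rangle \cap R = \{\pm \alpha\}$. Conversely, $\alpha^\perp = (-\alpha)^\perp$ contains $x$, so $-\alpha \in R_x$ as well. Hence $\langle \alpha \rangle \cap R_x = \{\pm \alpha\}$, and $R_x$ is reduced.

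For the backward direction I will argue by contrapositive. Suppose $R$ is not reduced, so there exists $\alpha \in R$ and $\beta \in \langle \alpha \rangle \cap R$ with $\beta \notin \{\pm\alpha\}$. Writing $\beta = \lambda \alpha$ with $\lambda \in \RR \setminus \{\pm 1\}$, both $\alpha$ and $\beta$ define the same hyperplane $H := \alpha^\perp = \beta^\perp \in \AAA$. By the very definition of a hyperplane arrangement (condition (1)), $H \cap T \neq \emptyset$, so we can choose some $x \in H \cap T$. Then $\alpha, \beta \in R_x$, but $\beta = \lambda \alpha$ with $\lambda \neq \pm 1$, so $\langle \alpha \rangle \cap R_x \supsetneq \{\pm \alpha\}$, showing that $R_x$ is not reduced.

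The argument is essentially a two-line observation; the only conceptual content is locating, for any ``doubled'' root of $R$, a point $x \in T$ on its zero hyperplane, which is exactly what axiom (1) of a hyperplane arrangement guarantees. Note that the statement restricts attention to $x \in T$ rather than $x \in \overline{T}$; this restriction is harmless for the argument because $H \cap T$ is already non-empty. No further delicate point arises.
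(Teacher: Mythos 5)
Your proof is correct and follows essentially the same route as the paper's: the forward direction rests on $R_x\subseteq R$, and the backward direction on the fact that every root's hyperplane meets $T$ (the paper phrases this as $R=\bigcup_{x\in T}R_x$). You merely spell out in two explicit directions what the paper compresses into one sentence.
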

\begin{proof}
This follows immediately since $R_x$ is constructed as a subset of $R$ and $R = \bigcup_{x \in T} R_x$, note that for $\alpha \in R_x$ we find $\alpha^\perp \cap T \neq \emptyset$.
\end{proof}

\begin{lem}
Let $(\AAA,T)$ be a Tits arrangement associated to $R$. Let $x \in \ol{T}$ with $R_x \neq \emptyset$. Let $K,L \in \KKK^x$ be adjacent by $\alpha_1$, and $B^K = \{\alpha_1, \dots, \alpha_n\}$, $B^L = \{\beta_1, \dots, \beta_n\}$ indexed compatible with $B^K$. Then $B^{K}_x \rightarrow B^{L}_x, \alpha_i \mapsto \beta_i$ is a bijection.
\label{restmap}
\end{lem}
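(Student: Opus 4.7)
The plan is to verify that the assignment $\alpha_i \mapsto \beta_i$ restricts to a well-defined bijection between $B^K_x$ and $B^L_x$ by checking, index by index, that $\alpha_i(x) = 0$ if and only if $\beta_i(x) = 0$. The key observation is that $\alpha \in R_x$ precisely when $x \in \alpha^\perp$, so $B^K_x = \{\alpha_i \in B^K \mid \alpha_i(x) = 0\}$ and similarly for $B^L_x$. Since $\alpha_i \mapsto \beta_i$ is clearly a bijection $B^K \to B^L$, showing the equivalence $\alpha_i(x) = 0 \Leftrightarrow \beta_i(x) = 0$ is enough.

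For the index $i = 1$, I use that $K, L \in \KKK_x$ implies $x \in \ol{K} \cap \ol{L} \subset \alpha_1^\perp$, so $\alpha_1(x) = 0$ and $\alpha_1 \in B^K_x$. Compatibility of the indexing together with Lemma \ref{adjcham1} forces $\beta_1 = -\alpha_1$, so $\beta_1(x) = 0$ as well, and $\beta_1 \in B^L_x$.

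For $i \neq 1$, Lemma \ref{adjcham15} yields $\beta_i \in \langle \alpha_i, \alpha_1 \rangle$, so write $\beta_i = c\alpha_i + d\alpha_1$ with $c, d \in \RR$. The coefficient $c$ must be nonzero: otherwise $\beta_i$ would be a scalar multiple of $\alpha_1 = -\beta_1$, contradicting the linear independence of $B^L$. Evaluating at $x$ and using $\alpha_1(x) = 0$ gives $\beta_i(x) = c \alpha_i(x)$, so $\beta_i(x) = 0$ if and only if $\alpha_i(x) = 0$. This establishes the equivalence $\alpha_i \in B^K_x \Leftrightarrow \beta_i \in B^L_x$ for every $i$, and therefore the map $\alpha_i \mapsto \beta_i$ restricts to a bijection $B^K_x \to B^L_x$.

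No step looks genuinely hard; the proof is essentially an unpacking of the earlier structural lemmas. The only point requiring a moment's care is the argument that the coefficient $c$ in the expression $\beta_i = c \alpha_i + d\alpha_1$ is nonzero, which is what actually powers the biconditional and thus the bijectivity of the restricted map.
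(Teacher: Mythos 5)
Your proof is correct, and the central computation is the one the paper uses: invoke Lemma \ref{adjcham15} to write $\beta_i = c\alpha_i + d\alpha_1$ and evaluate at $x$ using $\alpha_1(x)=0$. Where you diverge is in how bijectivity is closed. The paper only proves the forward implication ($\alpha_i \in B^K_x \Rightarrow \beta_i \in B^L_x$, i.e.\ well-definedness) and then obtains surjectivity from a cardinality count: by Lemma \ref{indeqdim} both $B^K_x$ and $B^L_x$ are bases of $\langle R_x\rangle$, hence have the same size. You instead prove the full biconditional $\alpha_i(x)=0 \Leftrightarrow \beta_i(x)=0$ by observing that the coefficient $c$ of $\alpha_i$ must be nonzero (else $\beta_i$ would be proportional to $\beta_1=-\alpha_1$, contradicting the linear independence of $B^L$). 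This makes the appeal to Lemma \ref{indeqdim} unnecessary and is arguably the cleaner route, since it replaces a dimension count resting on the face structure of $\ol{K}$ with a one-line linear-independence observation; the paper's version, on the other hand, gets the equality $|B^K_x|=|B^L_x|$ for free from machinery it has already built. You correctly isolated the point that actually powers the biconditional, namely $c\neq 0$.
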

\begin{proof}
We know by Lemma \ref{indeqdim} that $\langle B^{K}_x \rangle = \langle B^{L}_x \rangle$. Since $B^{K}_x, B^{L}_x$ consist of linear independent vectors, we get $|B^{K}_x| = |B^{L}_x|$.

It remains to show that the map is well defined. For $\alpha_i \in B^{K}_x$ we find $x \in \beta_i^\perp$. We have $x \in \alpha_1^\perp$ and $x \in \alpha_i^\perp$. Now $\beta_i = \lambda_1\alpha_1 + \lambda_i\alpha_i$ for some $\lambda_1, \lambda_i \in \ZZ$ by Lemma \ref{adjcham15}, so $\beta_i(x) = \lambda_1\alpha_1(x) + \lambda_i\alpha_i(x) = 0$ and we are done.
\end{proof}

\subsection{Restrictions of simplicial arrangements}

In this section, we will discuss how a Tits arrangement $(\AAA,T)$ induces a Tits arrangement on hyperplanes in $\AAA$. In the classical theory of hyperplane arrangements this is also called the restriction of an arrangement [cp.\ \cite{OT92}].

\begin{defi}
Let $(\AAA,T)$ be a Tits arrangement associated to $R$, and let $H$ be a hyperplane. Define
$$\pi_H^\ast: V^\ast \to H^\ast, \alpha \mapsto \alpha|_H,$$
 and set 
$$R^H := \pi_H^\ast(R) \setminus (\{0\} \cup \{\alpha \in \pi_H^\ast(R) \mid \alpha^\perp \cap H \cap T = \emptyset\}).$$
We can also define the connected components of $H \setminus \bigcup_{H' \in \AAA^H} H'$ as $\KKK^H$.
\end{defi}

\begin{rem}
Note that in the case $r=0$ there exists no hyperplane which is not in the arrangement. In the case $r=1$ the set $\AAA^H$ is just the point $\{0\}$ or empty. Our statements will remain true in these cases, but most of the time they will be empty.

In particular we will examine the case where $H \in \AAA$, since otherwise we will not necessarily see an induced simplicial complex.

An interesting special case occurs for affine arrangements with imaginary root $\gamma$ and $H = \gamma^\perp$, since in many cases this yields as $R^H$ a root system associated to a spherical Tits arrangement.
\end{rem}

\begin{lem}
With notation as above, we find $\AAA^H = \{\alpha^\perp \cap H \mid \pi_H^\ast(\alpha) \in R^H\}$.
\end{lem}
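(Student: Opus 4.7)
The plan is to prove both inclusions by unpacking the definitions, using the fact that since $R$ is a root system associated to $(\AAA,T)$, every hyperplane in $\AAA$ is of the form $\alpha^\perp$ for some $\alpha \in R$. The key observation I would use throughout is that for a nonzero $\alpha \in V^\ast$, the restriction $\alpha|_H$ vanishes identically if and only if $H \subseteq \alpha^\perp$, which (since $\alpha^\perp$ is a hyperplane) is equivalent to $H = \alpha^\perp$. Equivalently, $\alpha|_H \neq 0$ if and only if $\alpha^\perp \neq H$, in which case $\ker(\alpha|_H) = \alpha^\perp \cap H$ is a hyperplane in $H$.

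For the inclusion ``$\subseteq$'', I would start with $H'' \in \AAA^H$, so $H'' = H' \cap H$ for some $H' \in \AAA \setminus \{H\}$ with $H' \cap H \cap T \neq \emptyset$. Pick $\alpha \in R$ with $\alpha^\perp = H'$. Since $H' \neq H$, the observation above yields $\alpha|_H \neq 0$, i.e.\ $\pi_H^\ast(\alpha) \neq 0$. Moreover $\alpha^\perp \cap H \cap T = H' \cap H \cap T \neq \emptyset$, so $\pi_H^\ast(\alpha) \notin \{\beta \in \pi_H^\ast(R) \mid \beta^\perp \cap H \cap T = \emptyset\}$, where $\beta^\perp$ is taken inside $H$. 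Hence $\pi_H^\ast(\alpha) \in R^H$, and $H'' = \alpha^\perp \cap H$ lies in the right-hand set.

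For ``$\supseteq$'', conversely, suppose $\pi_H^\ast(\alpha) \in R^H$ for some $\alpha \in R$. By definition of $R^H$, first $\pi_H^\ast(\alpha) = \alpha|_H \neq 0$, so $\alpha^\perp \neq H$; hence $\alpha^\perp \in \AAA \setminus \{H\}$. Second, $\alpha^\perp \cap H \cap T \neq \emptyset$. Therefore $\alpha^\perp \cap H \in \AAA^H$ by the definition of $\AAA^H$.

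I do not expect any real obstacle; the statement is essentially a translation between the two ways of indexing the hyperplanes of the restricted arrangement (by elements of $\AAA$ versus by restrictions of roots). The only point that needs a brief justification is the equivalence $\alpha|_H = 0 \Leftrightarrow H = \alpha^\perp$, which handles both the case distinction in the forward direction and the nondegeneracy condition built into $R^H$ in the reverse direction.
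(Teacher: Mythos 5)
Your proof is correct and is exactly the argument the paper has in mind: the paper's own proof reads ``This follows immediately from the definition,'' and your two inclusions, together with the observation that $\alpha|_H=0$ iff $H=\alpha^\perp$, are just that definitional unwinding written out in full.
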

\begin{proof}
This follows immediately from the definition.
\end{proof}

\begin{fol}
The elements $K' \in \KKK^H$ are simplicial cones in $H \cap T$.
\label{indsimp}
\end{fol}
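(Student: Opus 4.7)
The plan is to reduce to the simplicial structure of the full-dimensional chambers of $\KKK$ via Lemma \ref{indfaces}. The relevant case, as indicated by the preceding remark, is $H \in \AAA$. Given $K' \in \KKK^H$, Lemma \ref{indfaces} furnishes a chamber $K \in \KKK$ with $H \in W^K$ and $\ol{K'} = \ol{K}\cap H$.

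Since $(\AAA,T)$ is a Tits arrangement associated to $R$, the chamber $K$ is an open simplicial cone with root basis $B^K = \{\alpha_1,\dots,\alpha_r\}$ of $V^\ast$. Because $H \in W^K$, exactly one element of $B^K$ defines $H$; after relabeling we may assume $\alpha_1^\perp = H$. Hence
$$\ol{K'} = \alpha_1^\perp \cap \bigcap_{i=2}^r \ol{\alpha_i^+}.$$

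Next, the restriction map $\pi_H^\ast \colon V^\ast \to H^\ast$ has kernel precisely $\langle \alpha_1\rangle$, so the $r-1$ forms $\pi_H^\ast(\alpha_2), \dots, \pi_H^\ast(\alpha_r)$ are linearly independent in the $(r-1)$-dimensional space $H^\ast$ and therefore form a basis of $H^\ast$. Reading the above display inside $H = \alpha_1^\perp$ identifies $\ol{K'}$ with the closed simplicial cone in $H$ associated to this basis, so $K'$ is an open simplicial cone in $H$ (contained in $H \cap T$ by the definition of $\KKK^H$).

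The main — in fact, the only — subtle step is verifying that the restricted family $\{\pi_H^\ast(\alpha_i)\}_{i=2}^r$ is a basis of $H^\ast$, which is immediate from $\ker \pi_H^\ast = \langle \alpha_1\rangle$. Granted Lemma \ref{indfaces}, the remainder is a direct computation.
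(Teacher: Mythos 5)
Your proof is correct and takes essentially the same route as the paper: both arguments reduce via Lemma \ref{indfaces} to the observation that $\ol{K'}$ is a maximal face of a closed chamber $\ol{K}$, the paper then simply citing Remark \ref{rem:scs}, while you make explicit why such a maximal face is a (closed) simplicial cone in $H$ by checking that $\pi_H^\ast(B^K)\setminus\{0\}$ is a basis of $H^\ast$. No gaps.
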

\begin{proof}
If $r=0$, the statement is empty, so let $r \geq 1$. Lemma \ref{indfaces} yields that for $K' \in \KKK_H$ the set $\overline{K'}$ is actually an maximal face of some chamber $K \in \KKK$. This is a simplicial cone by Remark \ref{rem:scs}.
\end{proof}

The following observation is immediate from a geometric point of view, but necessary to point out:

\begin{lem}
Let $H \leq V$ be an arbitrary hyperplane, and let $\alpha \in R$. Then we find $\pi_H^\ast(\alpha)^\perp = \alpha^\perp \cap H$, $\pi_H^\ast(\alpha)^+ = \alpha^+ \cap H$ and $\pi_H^\ast(\alpha)^- = \alpha^- \cap H$.
\label{indroots}
\end{lem}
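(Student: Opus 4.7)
The plan is to unwind the definitions on both sides of each of the three equalities and observe that they coincide. Recall that $\pi_H^\ast\colon V^\ast \to H^\ast$ is defined as the restriction map $\alpha \mapsto \alpha|_H$, so $\pi_H^\ast(\alpha)$ is the linear form on $H$ whose value at $v \in H$ is simply $\alpha(v)$.

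For the first equality, I would compute
\[
\pi_H^\ast(\alpha)^\perp = \ker(\alpha|_H) = \{v \in H \mid \alpha(v) = 0\} = H \cap \ker(\alpha) = H \cap \alpha^\perp.
\]
For the second, analogously,
\[
\pi_H^\ast(\alpha)^+ = (\alpha|_H)^{-1}(\RR_{>0}) = \{v \in H \mid \alpha(v) > 0\} = H \cap \alpha^{-1}(\RR_{>0}) = H \cap \alpha^+,
\]
and the third equality is obtained by replacing $\RR_{>0}$ with $\RR_{<0}$ throughout.

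Since each step consists of applying the defining formula of $\pi_H^\ast$ and the defining formulas of $\beta^\perp$, $\beta^+$, $\beta^-$ for a linear form $\beta$ on $H$ (resp.\ on $V$), there is really no obstacle here; the statement is purely a matter of recognizing that taking the preimage of $\RR_{>0}$, $\RR_{<0}$, or $\{0\}$ under $\alpha|_H$ is the same as intersecting the corresponding preimage under $\alpha$ with $H$. The lemma is recorded mainly so that later, when passing from the ambient root system $R$ to the restricted root system $R^H$, we may freely identify the hyperplanes and half-spaces cut out by $\pi_H^\ast(\alpha)$ on $H$ with the traces of the hyperplanes and half-spaces cut out by $\alpha$ on $V$.
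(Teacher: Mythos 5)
Your proof is correct and follows the same route as the paper: both simply use that $\pi_H^\ast(\alpha)(x) = \alpha(x)$ for $x \in H$ and then unwind the definitions of $\beta^\perp$, $\beta^+$, $\beta^-$; you merely write out the set-theoretic identities more explicitly than the paper does.
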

\begin{proof}
First note that $\pi_H^\ast(\alpha)(x) = \alpha(x)$ by definition of $\pi_H^\ast$. The equalities follow immediately by considering the cases $\alpha(x) = 0$ or $\alpha(x) > 0$.
\end{proof}

The above lemma immediately yields the following.
\begin{fol}
$\AAA^H = \{\alpha^\perp \leq H \mid \alpha \in R^H\}$.
\end{fol}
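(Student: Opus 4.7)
The plan is to observe that this corollary is a direct combination of the immediately preceding two lemmas, so the work is purely bookkeeping about where the kernel is taken (in $V$ versus in $H$).

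More concretely, I would start from the preceding lemma, which already gives the identification
\[
 \AAA^H = \{\alpha^\perp \cap H \mid \pi_H^\ast(\alpha) \in R^H\},
\]
where the $\perp$ on the right is taken inside $V$. The task is then to rewrite the right hand side so that the hyperplane $\alpha^\perp \cap H$ (sitting inside $H$) is expressed intrinsically as the kernel of an element of $H^\ast$.

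For this I would appeal to Lemma \ref{indroots}, which tells us exactly that $\pi_H^\ast(\alpha)^\perp = \alpha^\perp \cap H$. Substituting $\beta := \pi_H^\ast(\alpha)$ in the expression above, and noting that by definition $\beta$ ranges precisely over $R^H$ as $\alpha$ ranges over the preimage, yields
\[
 \AAA^H = \{\beta^\perp \leq H \mid \beta \in R^H\},
\]
which is the claim.

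There is no real obstacle here; the only thing to be careful about is the mild overloading of the symbol $\perp$. On the left of Lemma \ref{indroots} it means the kernel of a functional on $H$ (hence a hyperplane in $H$), while on the right it means the kernel in $V$ intersected with $H$; these agree tautologically because $\pi_H^\ast(\alpha)$ is just the restriction of $\alpha$ to $H$, and a vector $x \in H$ satisfies $\alpha(x) = 0$ iff $\pi_H^\ast(\alpha)(x) = 0$. Once this is noted the corollary is a one-line consequence.
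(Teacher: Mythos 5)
Your proposal is correct and follows essentially the same route as the paper, which states the corollary as an immediate consequence of Lemma \ref{indroots} combined with the preceding description $\AAA^H = \{\alpha^\perp \cap H \mid \pi_H^\ast(\alpha) \in R^H\}$. Your explicit remark about the two meanings of $\perp$ (kernel in $H^\ast$ versus kernel in $V^\ast$ intersected with $H$) is exactly the bookkeeping the paper leaves implicit.
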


\begin{lem}
Let $(\AAA,T)$ be a Tits arrangement associated to $R$, $K \in \KKK$, $H \in W^K$. Let $B := \pi_H^\ast(B^K) \setminus \{0\}$. Then
\begin{enumerate}[label=\roman*)]
	\item $H \cap \overline{K}= \ol{K'}$ for a unique $K' \in \KKK^H$,
	\item $\langle \ol{K'} \cap \alpha^\perp \rangle = \alpha^\perp$  and $\alpha^\perp \cap K' = \emptyset$ for $\alpha \in B$.
	\item $K' = \{x \in H \mid \alpha(x) > 0 \text{ for all } \alpha \in B\}$.
\end{enumerate}
\label{indbases}
\end{lem}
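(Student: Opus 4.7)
The plan is to reduce everything to the standard description of the simplicial cone $\ol{K}$ in terms of its defining basis $B^K$, using the hypothesis that $(\AAA,T)$ is thin.

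First I would handle (i). Since $(\AAA,T)$ is a Tits arrangement, it is thin, so $W^K \subseteq \AAA$ and in particular $H \in \AAA$. Lemma \ref{indfaces} then applies and gives directly that $\ol{K} \cap H$ is the closure of a (unique) chamber $K' \in \KKK^H$.

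Next I would set up notation for (ii) and (iii). Index $B^K = \{\alpha_1, \dots, \alpha_r\}$ in such a way that $\alpha_1^\perp = H$; this is possible since $H \in W^K$ and $W^K = \{\alpha^\perp \mid \alpha \in B^K\}$. Then $\pi_H^\ast(\alpha_1) = 0$, so $B = \{\pi_H^\ast(\alpha_2),\dots,\pi_H^\ast(\alpha_r)\}$. As $\alpha_1,\dots,\alpha_r$ form a basis of $V^\ast$ with $\alpha_1|_H = 0$, the restrictions $\alpha_2|_H,\dots,\alpha_r|_H$ are linearly independent and hence form a basis of $H^\ast$ (which has dimension $r-1$). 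In particular all elements of $B$ are nonzero and distinct.

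For (iii), I would use $K = K^{B^K} = \bigcap_{i=1}^r \alpha_i^+$, so that
\[
\ol{K}\cap H \;=\; \{x \in H \mid \alpha_i(x) \geq 0 \text{ for } i=1,\dots,r\} \;=\; \{x \in H \mid \alpha_i(x) \geq 0 \text{ for } i=2,\dots,r\},
\]
since $\alpha_1(x)=0$ automatically on $H$. The interior of this set in $H$ is $\{x\in H \mid \alpha(x)>0 \text{ for all } \alpha\in B\}$, and by (i) this interior is precisely $K'$. This gives (iii) and, incidentally, identifies $\ol{K'}$ as a closed simplicial cone in $H$ associated to the basis $B$ of $H^\ast$.

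For (ii), fix $\alpha = \pi_H^\ast(\alpha_j) \in B$ with $j \geq 2$. Then $\alpha^\perp$ (taken inside $H$) equals $H \cap \alpha_j^\perp = \alpha_1^\perp \cap \alpha_j^\perp$, so
\[
\ol{K'} \cap \alpha^\perp \;=\; \ol{K}\cap \alpha_1^\perp \cap \alpha_j^\perp.
\]
By Remark \ref{rem:scs} the right-hand side is the codimension-$2$ face of the simplicial cone $\ol{K}$ corresponding to the subset $\{\alpha_1,\alpha_j\}\subset B^K$, and its linear span is exactly $\alpha_1^\perp \cap \alpha_j^\perp = \alpha^\perp$, which is the first claim. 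The second claim, $\alpha^\perp \cap K' = \emptyset$, is immediate from (iii), since $x \in K'$ forces $\alpha(x) = \alpha_j(x) > 0$, contradicting $x \in \alpha^\perp$.

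I do not expect a serious obstacle: once one writes down a basis of $B^K$ containing a defining form for $H$, everything reduces to linear algebra for simplicial cones, together with the already-established Lemma \ref{indfaces} which supplies (i) and is the only place where thinness (hence $H \in \AAA$) is used.
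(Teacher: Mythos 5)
Your proof is correct and follows essentially the same route as the paper: part (i) via Lemma \ref{indfaces} (the paper declares it clear from the definitions, which amounts to the same thing), and parts (ii) and (iii) by writing $\ol{K}\cap H$ as the closed simplicial cone on the basis $\{\alpha_2|_H,\dots,\alpha_r|_H\}$ of $H^\ast$ and reading off its faces, which is exactly the paper's use of Lemma \ref{indroots} and Remark \ref{rem:scs}. The one step both you and the paper pass over silently is that $K'$ is the \emph{entire} relative interior of $\ol{K}\cap H$ (a priori $K'$ is only the part of that interior lying in $T$ and off the hyperplanes of $\AAA^H$), but this is at the same level of detail as the published argument.
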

\begin{proof}
Part i) is clear by the definition of $\KKK$ and $\KKK^H$, since $H \cap \overline{K}$ is a unique maximal face of $\overline{K}$.

For the second statement assume $H = \beta^\perp$ for $\beta \in B^K$. We use that $\ol{K'}$ is a maximal face of $\ol{K}$. The maximal faces of $\ol{K'}$ are exactly the sets of the form $\ol{K} \cap H \cap \alpha^\perp = \ol{K'} \cap \alpha^\perp$ for $\alpha \in B^K \setminus \{\beta\}$, by Lemma \ref{indroots} we obtain that the faces can also be written as $\ol{K'} \cap \alpha^\perp$ for $\alpha \in B$. As the maximal face $\ol{K'} \cap \alpha^\perp$ spans a hyperplane in $H$ contained in $\alpha^\perp$, we conclude $\langle \ol{K'} \cap \alpha^\perp \rangle = \alpha^\perp$ for $\alpha \in B$ and ii) holds.

Assertion iii) is a direct consequence of Lemma \ref{indroots}.
\end{proof}

\begin{prop}
Let $(\AAA,T)$ be a $k$-spherical simplicial arrangement, $k \geq 1$ and $H \in \AAA$. Then $(\AAA^H,T \cap H)$ is a $k-1$-spherical simplicial arrangement. If $(\AAA^H, T \cap H)$ is a Tits arrangement associated to $R$, $R^H$ is a root system for $(\AAA^H, T\cap H)$.
\label{indarrange}
\end{prop}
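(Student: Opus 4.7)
The proposition splits into three claims: that $(\AAA^H, T \cap H)$ is simplicial, that it is $(k-1)$-spherical, and that $R^H$ is a root system for it. The first is essentially immediate: it was already established that $(\AAA^H, T \cap H)$ is a hyperplane arrangement of rank $r-1$, and Corollary \ref{indsimp} shows that every chamber $K' \in \KKK^H$ is a simplicial cone. So $(\AAA^H, T \cap H)$ is a simplicial arrangement.

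For the sphericity claim, the plan is to exploit a natural dimension-preserving embedding of simplicial complexes $\SSSS(\AAA^H, T \cap H) \hookrightarrow \SSSS(\AAA,T)$. By Lemma \ref{indfaces}, each closed chamber $\ol{K'} \in \Cham(\SSSS(\AAA^H, T\cap H))$ arises as $\ol{K} \cap H$ for some $K \in \KKK$ with $H \in W^K$. Using Lemma \ref{indbases}, the root basis $B^{K'}$ of $K'$ in $H$ is identified with $\pi_H^\ast(B^K) \setminus \{0\}$, i.e.\ with $B^K$ minus the unique root defining $H$. Thus a simplex $S' = \ol{K'} \cap \bigcap_{\alpha \in B'} \alpha^\perp$ of $\SSSS(\AAA^H, T \cap H)$ with $B' \subset B^{K'}$ coincides with the simplex $\ol{K} \cap \bigcap_{\beta \in B' \cup \{\beta_H\}} \beta^\perp$ of $\SSSS(\AAA, T)$, with dimension $(r-1) - |B'| - 1 = (r-1) - |B'| - 1$ in $H$, which equals $r - (|B'| + 1) - 1$ in $V$. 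Dimension is preserved, so an $(r-k-1)$-simplex in $\SSSS(\AAA^H, T\cap H)$ (codimension $k-1$ in $H$) is an $(r-k-1)$-simplex of $\SSSS(\AAA,T)$ contained in $H$. The $k$-sphericity of $(\AAA, T)$ forces it to meet $T$, hence to meet $T \cap H$. This is precisely $(k-1)$-sphericity of $(\AAA^H, T \cap H)$.

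For the root system claim (assuming $(\AAA, T)$ is Tits associated to $R$ and that $(\AAA^H, T \cap H)$ is also Tits, which is automatic when $k \geq 2$), I would check the three axioms directly. By construction $0 \notin R^H$. For $\alpha = \pi_H^\ast(\beta) \in R^H$ with $\beta \in R$, we have $-\beta \in R$ and $-\alpha = \pi_H^\ast(-\beta)$; the defining condition $\alpha^\perp \cap H \cap T \neq \emptyset$ is invariant under $\alpha \mapsto -\alpha$, so $-\alpha \in R^H$. Finally, the identity $\AAA^H = \{\alpha^\perp \leq H \mid \alpha \in R^H\}$ is exactly the corollary following Lemma \ref{indroots}.

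The main obstacle I expect is the rigorous matching of simplices across the two complexes, and in particular confirming that codimension in $H$ on the $\SSSS(\AAA^H, T \cap H)$ side translates correctly to codimension in $V$ on the $\SSSS(\AAA, T)$ side. This pivots on the bijection $B^{K'} \leftrightarrow B^K \setminus \{\beta_H\}$ provided by Lemma \ref{indbases}, which guarantees that adding back the single root $\beta_H$ defining $H$ shifts codimension by exactly one; everything else is bookkeeping.
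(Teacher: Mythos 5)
Your proposal is correct and follows essentially the same route as the paper: simpliciality of the chambers via Corollary \ref{indsimp} and Lemma \ref{indbases}, the root-system axioms checked directly from the definition of $R^H$ together with Lemma \ref{indroots}, and $(k-1)$-sphericity from the fact that $\SSSS(\AAA^H,T\cap H)$ sits inside $\SSSS(\AAA,T)$ with codimension shifted by exactly one, so that a codimension-$(k-1)$ simplex of the restriction is a codimension-$k$ simplex of the ambient complex and hence meets $T$, and therefore $T\cap H$. The paper states the inclusion of complexes more tersely ("since this is a subset of $\mathcal{S}$"), whereas you justify it explicitly through the identification $B^{K'}\leftrightarrow B^K\setminus\{\beta_H\}$; this is only a difference in the level of detail, not in the argument.
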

\begin{proof}
Note that $R^H$ does not contain $0$ by definition, and if $\alpha \in R^H$, we find $\alpha' \in R$ with $\alpha = \pi_H^\ast(\alpha')$, so $-\alpha' \in R$ and $-\alpha \in R^H$.

By Lemma \ref{indroots} we know that $\AAA^H = \{\alpha^\perp \leq H \mid \alpha \in R^H\}$ and by definition we have $\alpha^\perp \cap H \cap T \neq \emptyset$. Furthermore we know that the connected components in $\KKK^H$ are simplicial cones by Corollary \ref{indsimp}.

Let $K' \in \KKK^H$ and $K \in \KKK$ such that $K'$ is a face of $K$. Let $\beta \in B^K$ such that $\beta^\perp = H$. By ii) in Lemma \ref{indbases} we find $W^{K'} = 
\{\pi_H^\ast(\alpha)^\perp \mid \alpha \in B^K \setminus \{\beta\}\}$, and together with iii) in Lemma \ref{indbases} we find reduced roots $B^{K'} = \{\lambda_\alpha \alpha \in (R^H)^{\red} \mid \alpha \in \pi_H^\ast(B^K) \setminus \{0\}, \lambda_\alpha \in \RR_{>0}\}$ with the property $K' = \{x \in H \mid \alpha(x) > 0 \text{ for all } \alpha \in B^{K'}\}$. Hence $\AAA^H$ is a simplicial arrangement in $T \cap H$, and $\AAA^H = \{\alpha^\perp \mid \alpha \in R^H\}$ holds. So if $\AAA^H$ is thin, $R^H$ is a root system for $\AAA^H$.

Now assume $F$ is an $m$-simplex in the simplicial complex $\mathcal{S}^H$ associated to the simplicial arrangement $(\AAA^H, T\cap H)$. Since this is a subset of $\mathcal{S}$, $F$ meets $T \cap H$ if and only if $F$ meets $T$, and therefore $(\AAA^H,T \cap H)$ is $k-1$-spherical if $(\AAA,T)$ is $k$-spherical.
\end{proof}

\begin{fol}
Assume that $(\AAA^H, T \cap H)$ is a Tits arrangement associated to $R^H$. Let $K \in \KKK$ such that $H \in W^H$. Then the set $\pi_H^\ast(B^K) \setminus\{0\}$ is a basis of $H^\ast$.
\label{projbasis}
\end{fol}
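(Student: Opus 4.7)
The plan is to reduce the statement to a clean linear-algebraic dimension count for the restriction map $\pi_H^\ast \colon V^\ast \to H^\ast$. First I would observe that $\pi_H^\ast$ is surjective with one-dimensional kernel: indeed, its kernel is the set of linear forms vanishing on $H$, which equals $\RR\gamma$ for any nonzero $\gamma \in V^\ast$ with $\gamma^\perp = H$. Hence $\dim \pi_H^\ast(V^\ast) = r-1 = \dim H^\ast$.

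Next, since $H \in W^K$, the description of walls via root bases (the remark following Lemma \ref{rsx}, namely $W^K = \{\alpha^\perp \mid \alpha \in B^K\}$) yields a $\beta \in B^K$ with $\beta^\perp = H$, so $\beta \in \ker \pi_H^\ast$. Because $B^K$ is a basis of $V^\ast$ (hence linearly independent) and $\ker \pi_H^\ast$ is one-dimensional, $\beta$ is the unique element of $B^K$ mapped to $0$. Moreover, $\pi_H^\ast$ is injective on $B^K \setminus \{\beta\}$: if $\pi_H^\ast(\alpha_1) = \pi_H^\ast(\alpha_2)$ for two such elements, then $\alpha_1 - \alpha_2 \in \ker \pi_H^\ast = \RR\beta$, contradicting the linear independence of $B^K$ unless $\alpha_1 = \alpha_2$. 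Thus $B := \pi_H^\ast(B^K) \setminus \{0\}$ has exactly $r-1$ elements.

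Finally, since $B^K$ spans $V^\ast$ and $\pi_H^\ast$ is surjective, the image $\pi_H^\ast(B^K) = B \cup \{0\}$ spans $H^\ast$, so $B$ spans $H^\ast$ as well. Combined with $|B| = r-1 = \dim H^\ast$, this forces $B$ to be a basis of $H^\ast$.

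There is no real obstacle here; the argument is short and purely linear-algebraic, and in fact does not use the Tits-arrangement hypothesis on $(\AAA^H, T \cap H)$. Alternatively, one could deduce the claim from Proposition \ref{indarrange}, which identifies $B^{K'}$ (a basis of $H^\ast$ as the root basis of the simplicial cone $K'$ described in Lemma \ref{indbases}) with a set of positive scalar multiples of the elements of $B$; but the direct dimension count is both cleaner and more self-contained.
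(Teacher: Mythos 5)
Your argument is correct, and it takes a genuinely different route from the paper. The paper deduces the corollary from Proposition \ref{indarrange} together with Lemma \ref{indbases}~iii): since $(\AAA^H, T\cap H)$ is a Tits arrangement, the chamber $K'$ with $\ol{K'}=\ol{K}\cap H$ has a root basis $B^{K'}$ which is a basis of $H^\ast$, and the description $K'=\{x\in H\mid \alpha(x)>0 \text{ for all }\alpha\in \pi_H^\ast(B^K)\setminus\{0\}\}$ identifies the elements of $\pi_H^\ast(B^K)\setminus\{0\}$ as nonzero scalar multiples of the elements of $B^{K'}$ --- exactly the alternative you sketch in your last paragraph. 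Your primary argument instead is a pure dimension count: $\pi_H^\ast$ is surjective with one-dimensional kernel spanned by the unique $\beta\in B^K$ with $\beta^\perp=H$ (which exists because $H\in W^K$; note the statement's ``$H\in W^H$'' is evidently a typo for $H\in W^K$, as both you and the paper read it), so $\pi_H^\ast$ is injective on $B^K\setminus\{\beta\}$ and the image is an $(r-1)$-element spanning set of the $(r-1)$-dimensional space $H^\ast$. What your route buys is generality and self-containment: it uses only that $B^K$ is a basis of $V^\ast$ (true for any chamber of the ambient Tits arrangement) and discards the hypothesis that the restriction is a Tits arrangement. What the paper's route buys is the sharper geometric information that $\pi_H^\ast(B^K)\setminus\{0\}$ coincides, up to positive scalars, with the root basis $B^{K'}$ of the restricted arrangement, which is the form in which the corollary is actually used. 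Both are valid; there is no gap in your proof.
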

\begin{proof}
By Proposition \ref{indarrange} the set $B^{K'}$ is a basis of $H^\ast$, by Lemma \ref{indbases} iii) we find that the elements in $\pi_H^\ast(B^K) \setminus \{0\}$ are non zero scalar multiples of $B^{K'}$.
\end{proof}

\begin{defi}If $(\AAA, T)$ is a simplicial arrangement, we call $(\AAA^H,T\cap H)$ for $H \in \AAA$ as above the \textit{induced simplicial hyperplane arrangement (by $(\AAA,T)$) on $H$} or the \textit{restriction of $(\AAA,T)$ to $H$}.
\end{defi}

\appendix

\section{Simplicial complexes}
\label{APP:simpcomp}

The notation in this chapter is mostly taken from \cite[Appendix A]{Da08} and \cite{Ti74}. We recall the notion of partially ordered sets:

\begin{defi}
Let $(M, \leq)$ be a poset. For $m \in M$ we write
$$\ul{m} := \{m' \in M \mid m' \leq m\}.
$$

Let $(M, \leq)$, $(N, \subseteq)$ be posets. A \textit{morphism of posets} is a map $\varphi: M \rightarrow N$, such that for $a,b \in M$ we have
$$a \leq b \implies \varphi(a) \subseteq \varphi(b).$$
It is called an isomorphism if it is bijective and $\varphi^{-1}$ is a morphism as well.
\end{defi}

\begin{remdef}\label{def:simpcomp}
There are two different approaches to define what a simplex is, since we want to use both, we will introduce them here. 

A \emph{simplex} can be seen as a poset $(S, \leq)$ isomorphic to $(\PP(J), \subseteq)$ for some set $J$, where $\subseteq$ denotes the set-wise inclusion. 
A \emph{simplicial complex} is then a poset $(\Delta, \leq)$ such that
\begin{enumerate}[label=\arabic*)]
	\item $\underline{a}$ is a simplex for all $a \in \Delta$,
	\item $a, b \in \Delta$ have a unique greatest lower bound, denoted by $a \cap b$.
\end{enumerate}

A \textit{vertex} of $\Delta$ is an element $v \in \Delta$ such that $a \leq v$ and $a \neq v$ imply $a = \emptyset$.

Another way to define a simplicial complex is to take a set $J$, and let $\Delta \subset \PP(J)$. Then $(\Delta, \subseteq)$ is a poset. It is a simplicial complex if furthermore for $a \in \Delta$ also $\PP(a) \subseteq \Delta$. The set of vertices corresponds to the set $J$.
The first definition makes it easier to describe the star of a simplex, which will often be useful.

So let $(\Delta, \leq)$ be a simplicial complex. For $a,b \in \Delta$ we say that $a$ \textit{is a face of} $b$ if $a \leq b$. We will write $a < b$ if $a \leq b$ and $a \neq b$.

Due to the second property, there exists a unique minimal element in $\Delta$ which is denoted by $\emptyset$.
\end{remdef}

\begin{defi}
Let $a \in \Delta$, then the \textit{rank of} $a$, $\rk(a)$ is the cardinality of the set of vertices contained in $a$. We define the rank of $\Delta$, $\rk(\Delta) := \sup_{a \in \Delta} \rk(a)$.

For $a \in \Delta$ define the \textit{star of $a$} as $\St(a) := \{b \in \Delta\ |\ a \leq b \}$. This is again a simplicial complex with minimal element $a$.

A \textit{chamber} of $\Delta$ is a maximal element in $\Delta$, we will denote the set of chambers as $\Cham(\Delta)$ or $\mathcal{K}$, if $\Delta$ is unambiguous.

Let $\alpha: \Delta \rightarrow \Delta'$ be a map between simplicial complexes $\Delta$ and $\Delta'$. Then $\alpha$ is called a \textit{morphism of simplicial complexes} if it is a morphism of posets and furthermore $\alpha|_A: \ul{A} \to \ul{\alpha(A)}$ is an isomorphism for all $A \in \Delta$.

A subcomplex $\Delta'$ of $\Delta$ is a subset of $\Delta$ such that the inclusion $\Delta' \rightarrow \Delta$ is a morphism of simplicial complexes.

For $a \leq b$, the \textit{codimension of $a$ in $b$} is the rank of $b$ in $\St(a)$, denoted by $\codim_b(a)$.

We say that $a$ is a \textit{maximal face} of $b$, if $\codim_b(a) =1$.
\end{defi}

\begin{rem}
For simplices occurring as subsets of $\RR^n$ it is convenient to consider the dimension of a simplex rather than the rank. We will denote an $n$-dimensional simplex simply as an $n$-simplex. Therefore, an $n$-simplex will be of rank $n+1$.
\end{rem}

\section{The simplicial complex $\SSSS$}
\label{APP:S}

Fix a simplicial arrangement $(\AAA,T)$. We will show that $\mathcal{S}$ is actually a simplicial complex and furthermore a chamber complex with set of chambers $\KKK$. We already showed in Remark \ref{rem:scs} that the simplicial structure on a closed chamber $\ol{K}$ is induced from the simplicial structure of $\ol{S}$, where $S$ is an open simplex such that $K = \RR_{>0}S$.

Recall the notion of the convex hull of a set.

\begin{remdef}
Let $(V,d)$ be a connected metric space. For an arbitrary subset $X \subset V$ the convex hull of $X$ is the smallest convex set $Y \subset V$, such that $X \subset Y$. For a different approach, define the segment between $x,y \in M$ to be 
$$
\sigma(x,y) := \{z \in M \mid d(x,z) + d(z,y) = d(x,y)\}.
$$
This can be used to define
$$H(X) := \bigcup_{x,y \in X} \sigma(x,y).$$
We can then recursively define
\begin{align*}
H^{(0)}(X) &:= X,\\
H^{(n)}(X) &:= H(H^{(n-1)}(X)) \text{ for } 1 \leq n \in \NN.
\end{align*}
Then the convex hull of $X$ is the set $\bigcup_{n \in \NN}H^{(n)}(X)$.

For $V = \RR^r$ and a linearly independent set $X \subset \RR^r$ the convex hull of $X$ can be more easily described as
$$\{\sum_{x \in X}\lambda_x x \mid 0 \leq  \lambda_x \leq 1 \text{ for all }x \in X, \sum_{x \in X}\lambda_x = 1 \}.$$
In this setting, we will refer to the set 
$$\{\sum_{x \in X}\lambda_x x \mid 0 < \lambda_x < 1 \text{ for all }x \in X, \sum_{x \in X}\lambda_x = 1\}$$
as the \textit{open convex hull of $X$}.

If $X \subset \RR^r$ is linearly independent, then its convex hull $S$ is exactly a simplex of rank $r-1$, with vertex set $V(S) = X$. The simplicial structure coincides with the poset $(\PP(X), \subseteq)$.

Moreover, if $C$ is a basis of $\RR^r$, and $S$ is the open convex hull of $C$, the cone
$$
\RR_{>0}S := \{\lambda v \mid \lambda \in \RR_{>0}, v \in S\}
$$
coincides with $K^C$ as defined in Remark \ref{rem:scs}. If $F$ is a face of $S$, there exists a subset $C_F \subset C$ such that $F$ is the convex hull of $C_F$. Then $\ol{\RR_{>0}F}$ is a face $\ol{K}$ in the simplicial complex $\ol{\ul{K}}$. Therefore $\PP(C)$, $S$, and $\ol{\ul{K^C}}$ are all isomorphic simplices, via the isomorphisms
\begin{align*}
\ul{S} &\to \ol{\ul{K}}, F \mapsto \ol{\RR_{>0}F},\\
\ul{S} &\to \PP(C), F \mapsto C_F.
\end{align*}
\label{rem:chse}
\end{remdef}

\begin{lem}
Let $C = \{v_1, \dots, v_r\}$, $C' = \{v_1', \dots, v_r'\}$ be bases of $V$ such that $K^{C} = K^{C'}$. Then, up to permutation, $\alpha_i = \lambda_i\alpha_i'$ for some $\lambda_i \in \RR_{>0}$ for all $1 \leq i \leq r$.

The same holds for two bases $B,B'$ of $V^\ast$ such that $K^B = K^{B'}$.
\label{equalcones}
\end{lem}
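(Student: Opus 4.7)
The plan is to exploit the fact that a closed simplicial cone determines its set of extreme rays (its one-dimensional faces) intrinsically; once this is known, $C$ and $C'$ must list the same $r$ rays and therefore agree up to positive rescaling and permutation.

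First I would observe that $K^C = K^{C'}$ forces $\ol{K^C} = \ol{K^{C'}}$, and that by Remark \ref{rem:scs} I may pass to the basis $B = \{\alpha_1, \ldots, \alpha_r\} \subset V^\ast$ dual to $C$, so that $K^C = K^B$. Intersecting the closed cone with all but one of the dual hyperplanes yields
$$\ol{K^C} \cap \bigcap_{j \neq i} \alpha_j^\perp = \RR_{\geq 0} v_i,$$
so the vertices of $\SSSS_{K^C}$ from Remark \ref{rem:scs} are precisely the rays $\RR_{\geq 0} v_1, \ldots, \RR_{\geq 0} v_r$. The analogous identity applies to $C'$.

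Next I would verify that these rays depend only on the cone $\ol{K^C}$ and not on the chosen basis. The cleanest argument is the standard characterization of extreme rays: a nonzero $v \in \ol{K^C}$ spans an extreme ray if and only if every decomposition $v = w_1 + w_2$ with $w_1, w_2 \in \ol{K^C}$ satisfies $w_1, w_2 \in \RR_{\geq 0} v$. Writing $v = \sum_j \mu_j v_j$ with all $\mu_j \geq 0$ and using uniqueness of coordinates with respect to the basis $C$, one sees that this extremality condition is equivalent to at most one $\mu_j$ being nonzero, i.e.\ to $v \in \RR_{\geq 0} v_i$ for some $i$. Hence $\{\RR_{\geq 0} v_1, \ldots, \RR_{\geq 0} v_r\}$ is intrinsic to $\ol{K^C}$, and the same collection of rays is produced by $C'$. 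After reindexing we therefore have $\RR_{\geq 0} v_i = \RR_{\geq 0} v_i'$, whence $v_i = \lambda_i v_i'$ for some $\lambda_i \in \RR_{>0}$.

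For the dual assertion, the intrinsic invariant of $K = K^B$ is the set of bounding hyperplanes $W^K = \{\alpha^\perp \mid \alpha \in B\}$, together with the datum that $K \subset \alpha^+$ for each $\alpha \in B$. Since every hyperplane has exactly two open half-spaces and $\alpha \in V^\ast$ is determined by its kernel together with the choice of positive half-space up to a positive scalar, comparing the data for $B$ and $B'$ yields the same statement up to permutation and positive rescaling. The main obstacle is really the intrinsic description of the extreme rays (respectively the walls); once that geometric fact is in hand, the rest is elementary linear algebra.
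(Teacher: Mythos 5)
Your argument is correct, but it takes a genuinely different route from the paper. You characterize the rays $\RR_{\geq 0}v_i$ intrinsically as the extreme rays of the closed cone $\ol{K^C}$ (via the decomposition criterion $v = w_1 + w_2 \Rightarrow w_1, w_2 \in \RR_{\geq 0}v$), observe that this set of rays is independent of the generating basis, and conclude by matching rays; the dual case is handled by the analogous intrinsic description of the walls. The paper instead rescales $C'$ so that the $w_i = \mu_i v_i'$ lie in the closed simplex spanned by $C$, notes that the change-of-basis matrix between $C$ and $C'' = \{w_1,\dots,w_r\}$ and its inverse are both non-negative, and invokes the fact that a non-negative matrix with non-negative inverse is monomial (citing Berman--Plemmons) to conclude it is a permutation matrix; the dual statement is then obtained by passing to dual bases via Remark \ref{rem:scs}. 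Your approach is more self-contained and geometric, avoiding the external matrix-theoretic fact, at the cost of having to verify the extremality characterization (which you do correctly) and, for the dual half, of asserting rather than fully proving that $\{\alpha^\perp \mid \alpha \in B\}$ is exactly the set of walls of $K^B$ --- a gap you could close most economically by simply dualizing the first half as the paper does, since $B$ determines and is determined (up to positive scalars and permutation) by its dual basis $C$.
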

\begin{proof}
Let $S$ be open convex hull of $C$. Then $\RR_{>0}S = K^C = K^{C'}$. One can choose positive scalars $\mu_i \in \RR_{>0}$ such that $w_i = \mu_iv_i' \in \ol{S}$. Then $w_i = \sum_{k=1}^r\kappa_{ik} v_i$ with $\sum_{k=1}^r \kappa_{ik}=1$, $0 \leq \kappa_{ik} \leq 1$. Let $C'' :=\{w_1, \dots, w_r\}$, we find $K^C = K^{C''}$. Hence
$v_i = \sum_{j=1}^r \nu_{ij}w_i$ with $\sum_{j=1}^r \nu_{ij}=1$ and $0 \leq \nu_{ij} \leq 1$.

Therefore the matrix $M_{C}^{C''}(\id)$ describing the base change is non-negative, and the same holds for its inverse $M_{C''}^{C}(\id)$. It is well known (see Theorem 4.6 in Chapter 3 of \cite{BP79} for example) that the inverse of a non-negative matrix is non-negative if and only if the matrix is monomial. Adding the fact that the sum of every column adds up to $1$, we get that $M_{C}^{C''}(\id)$ and $M_{C''}^{C}(\id)$ are already permutation matrices.

The statement for $B, B' \subset V^\ast$ just follows from Remark \ref{rem:scs} by considering the dual basis.
\end{proof}

\begin{rem}
For the following statement recall from \ref{rem:scs} that for a basis $B$ of $V^\ast$ the cone $K^B \subset V$ is given by $$K^B = \bigcap_{\alpha \in B} \alpha^+.$$
For a basis $C$ of $V$ the cone $K^C$ is given by
$$K^C = \{\sum_{v \in C}\lambda_v v \mid 0 < \lambda_v\}.$$
\end{rem}

\begin{fol}
Let $K = K^C = K^B$ for a basis $C$ of $V$ and a basis $B$ of $V^\ast$. Let $\beta \in V^\ast$. Then $\beta(v) \geq 0$ for all $v \in C$ if and only if $\beta \in \sum_{\alpha \in B} \RR_{\geq 0} \alpha$. Likewise $\beta(v) \leq 0$ for all $v \in C$ if and only if $\beta \in -\sum_{\alpha \in B} \RR_{\geq 0} \alpha$.\label{purelc}
\end{fol}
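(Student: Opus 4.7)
The plan is to reduce to the case where $B$ is literally the dual basis of $C$, and then the claim becomes a one-line computation of coefficients.

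First I would invoke Lemma \ref{equalcones} together with Remark \ref{rem:scs}: since $K^C = K^B$, the basis $B$ agrees, up to a permutation and positive rescaling, with the dual basis of $C$. Both sides of the equivalence we want to prove are invariant under permuting $B$ and under replacing any $\alpha \in B$ by $\lambda \alpha$ with $\lambda \in \RR_{>0}$ (the condition $\beta(v) \ge 0$ for all $v \in C$ does not see $B$ at all, and the cone $\sum_{\alpha \in B}\RR_{\ge 0}\alpha$ is unchanged). So without loss of generality I may assume $C = \{v_1,\dots,v_r\}$ and $B = \{\alpha_1,\dots,\alpha_r\}$ with $\alpha_i(v_j) = \delta_{ij}$.

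Next, since $B$ is a basis of $V^\ast$, write $\beta = \sum_{i=1}^r \mu_i \alpha_i$ uniquely with $\mu_i \in \RR$. Evaluating gives $\beta(v_j) = \mu_j$ for each $j$. Hence $\beta(v) \ge 0$ for all $v \in C$ is equivalent to $\mu_j \ge 0$ for all $j$, which is exactly the condition $\beta \in \sum_{\alpha \in B}\RR_{\ge 0}\alpha$. This proves the first equivalence.

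For the second equivalence, apply the first to $-\beta$: we have $(-\beta)(v) \ge 0$ for all $v \in C$ if and only if $-\beta \in \sum_{\alpha \in B}\RR_{\ge 0}\alpha$, which is the desired statement. I do not anticipate any real obstacle here; the only subtle point is making sure the reduction to a dual basis is legitimate, and Lemma \ref{equalcones} together with the invariance of both conditions under positive scaling takes care of that.
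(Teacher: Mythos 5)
Your proposal is correct and follows essentially the same route as the paper: both reduce to the fact (Remark \ref{rem:scs}) that $B$ is, up to permutation and positive scalars, the dual basis of $C$, and then read off the signs of the coefficients of $\beta$ in the basis $B$ by evaluating on $C$. Your explicit remark that both conditions are invariant under positive rescaling of $B$ is a slightly more careful justification of the reduction than the paper gives, but it is the same argument.
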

\begin{proof}
Let $\beta = \sum_{\alpha \in B} \lambda_\alpha \alpha$. By Remark \ref{rem:scs} we find that $B$ is dual to $C$ up to positive scalar multiples. Denote with $\alpha_v \in B$ the dual to $v \in C$. So $\beta = \sum_{v \in C} \lambda_v \alpha_v$. Applying this to $C$ yields $\beta(v) =  \lambda_{v}$ for all $v \in C$. This immediately yields both equivalences.
\end{proof}

\begin{lem}
Let $F \in \SSSS$. For every $H \in \AAA$, either $F \subset H$ or $F$ is contained in a unique closed half space of $H$, denoted by $\ol{D_H(F)}$. Furthermore $F \cap H \in \SSSS$, and if $F \in \ol{\ul{K}}$ for some $K \in \KKK$, then $F \cap H \in \ol{\ul{K}}$.
\label{FacIntWal}
\end{lem}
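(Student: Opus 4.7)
The plan is to first handle the half-space dichotomy by exploiting the fact that a chamber always sits on one side of every hyperplane in $\AAA$, and then to reduce $F \cap H$ to an intersection of $\ol{K}$ with certain walls of $K$ by means of Corollary \ref{purelc}. Write $F = \ol{K} \cap \bigcap_{H' \in \AAA'} H'$ with $K \in \KKK$ and $\AAA' \subseteq W^K$, and fix a basis $B = B_K \subseteq V^\ast$ with $K = K^B$, so that $W^K = \{\alpha^\perp \mid \alpha \in B\}$.

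For the first claim, I would argue as follows. Since $H \in \AAA$ and $K$ is a connected component of $T \setminus \bigcup_{H'' \in \AAA} H''$, the chamber $K$ is contained in one of the open half-spaces bounded by $H$, say $D_H(K)$. Taking closures gives $F \subseteq \ol{K} \subseteq \ol{D_H(K)}$. If $F \subseteq H$ there is nothing more to say; otherwise there exists $x \in F$ with $x \notin H$, and since $x \in \ol{D_H(K)} \setminus H = D_H(K)$, the only closed half-space of $H$ that contains $F$ is $\ol{D_H(K)}$, which we may then denote $\ol{D_H(F)}$.

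For the second claim, the case $F \subseteq H$ gives $F \cap H = F$, so $F \cap H \in \ol{\ul{K}} \subseteq \SSSS$ trivially. Assume now $F \not\subseteq H$. Choose $\beta \in V^\ast$ with $\beta^\perp = H$ and $\beta(v) \geq 0$ for all $v \in \ol{K}$, which is possible by the first part. By Corollary \ref{purelc} applied to the basis $B$, one has $\beta = \sum_{\alpha \in B} \lambda_\alpha \alpha$ with all $\lambda_\alpha \geq 0$; set $B_0 := \{\alpha \in B \mid \lambda_\alpha > 0\}$, which is non-empty because $\beta \neq 0$. Since every $\alpha \in B$ satisfies $\alpha(v) \geq 0$ on $\ol{K}$, the identity $\beta(v) = \sum_{\alpha \in B_0} \lambda_\alpha \alpha(v)$ forces, for $v \in \ol{K}$, the equivalence $\beta(v) = 0 \Leftrightarrow \alpha(v) = 0 \text{ for all } \alpha \in B_0$. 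Hence
\begin{equation*}
\ol{K} \cap H \;=\; \ol{K} \cap \bigcap_{\alpha \in B_0} \alpha^\perp.
\end{equation*}

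Since each $\alpha^\perp$ with $\alpha \in B_0 \subseteq B$ is a wall of $K$, putting $\AAA'' := \AAA' \cup \{\alpha^\perp \mid \alpha \in B_0\} \subseteq W^K$ yields
\begin{equation*}
F \cap H \;=\; \ol{K} \cap \bigcap_{H' \in \AAA''} H' \;\in\; \ol{\ul{K}} \;\subseteq\; \SSSS,
\end{equation*}
which is the desired conclusion. The main obstacle, and the place where the simplicial structure is essentially used, is the rewriting of $\ol{K} \cap H$ in terms of walls of $K$; everything else is a bookkeeping argument about half-spaces. The positivity provided by Corollary \ref{purelc} is exactly what makes this reduction possible and what prevents $\ol{K} \cap H$ from being a spurious lower-dimensional slice unrelated to the face structure of $\ol{K}$.
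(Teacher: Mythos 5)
Your proof is correct and follows essentially the same route as the paper's: both rest on the fact that a chamber lies in a single open half-space of each $H\in\AAA$, together with the positivity statement of Corollary \ref{purelc}. The only difference is one of presentation: the paper works primally, writing $F$ as the convex hull of rays $\RR_{>0}v_1,\dots,\RR_{>0}v_k$ and identifying $F\cap H$ as the sub-simplex on the vertices lying in $H$, whereas you work dually, expanding $\beta$ as a non-negative combination of $B_K$ and identifying $\ol{K}\cap H$ with $\ol{K}\cap\bigcap_{\alpha\in B_0}\alpha^\perp$; under the duality of Remark \ref{rem:scs} the set $B_0$ corresponds exactly to the vertices of $\ol{K}$ not lying in $H$, so the two descriptions coincide.
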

\begin{proof}
In case $F \subset H$, there is nothing to show, so assume $F \not \subset H$. Then the first statement is an immediate consequence of Corollary \ref{purelc} and the fact that the elements $K$ are defined as connected components of $V \setminus \bigcup_{H \in \AAA} H$.
Let $\alpha \in V^\ast$ such that $H = \alpha^\perp$ and $D_H(F) = \alpha^+$.

Let $F$ be the convex hull of the vertices $\RR_{>0}v_1, \dots, \RR_{>0}v_k$, where $k \geq 1$ as we assume $F \not \subset H$. For the same reason we can assume $v_i, \dots, v_k \notin H$ for some $i < k$, and without loss of generality we can assume $v_1, \dots, v_{j-1} \in H$. Then $F \cap H$ is the convex hull of $\RR_{>0}v_1, \dots, \RR_{>0}v_{i-1}$. By Remark \ref{rem:scs} $\ol{\ul{K}}$ is a simplicial complex, hence $F \cap H$ is a simplex in $\SSSS$ contained in $\ol{\ul{K}}$.
\end{proof}

\textit{Proof of Proposition \ref{Ssimpcomp}.}
Let $F \in \SSSS$, then $F = \ol{K} \cap \bigcap_{H \in \AAA_1}H$ for some $K \in \KKK$, $\AAA_1 \subset W^K$. In particular $\ul{F}$ carries the structure of a simplex, since by Remark \ref{rem:scs} $\ol{\ul{K}}$ is a simplex.

So let $F' \in \SSSS$, we have to show $F \cap F' \in \SSSS$. The intersection $F \cap F'$ is not empty, as it contains $0_V$.

Assume $F' = \ol{K'} \cap \bigcap_{H \in \AAA_2} H$, for $K' \in \KKK$, $\AAA_2 \subset W^{K'}$. In the case $K = K'$ there is nothing to show, as $\ul{\ol{K}}$ is a simplicial complex by Remark \ref{rem:scs}. So from now on let $K \neq K'$.

Consider the case that $\AAA_2 = \emptyset$, we find $F' = \ol{K'}$. The set $\ol{K'}$ can be written as $\ol{K'} = \bigcap_{H \in W^{K'}} \ol{D_H(K')}$ by Remark \ref{rem:scs}. By Lemma \ref{FacIntWal} the intersection $F \cap \ol{D_H(K')}$ for $H \in W^{K'}$ is either $F$, in case that $D_H(K')$ contains $F$, or equals $F \cap H$. In both cases, it is again a simplex in $\ol{\ul{K}}$. We can conclude that $F \cap \ol{K'} \in \SSSS$ for every $F \in \SSSS$.

Now let $\AAA_2 \neq \emptyset$, then $F \cap \bigcap_{H \in \AAA_2}H$ is a simplex in $\ol{\ul{K}}$ by Lemma \ref{FacIntWal}.

But $F \cap F'$ can be written as $F \cap \bigcap_{H \in \AAA_2}H \cap \ol{K'}$, using the previous part of the proof shows our claim.

\section{$\SSSS$ as a gated, numbered chamber complex}
\label{APP:SCC}

In this section we recall definitions and basic facts regarding chamber complexes and type functions following \cite{Mu94}.

\begin{defi}
Let $\Delta$ be a simplicial complex. We call $\Delta$ a chamber complex if it satisfies:

\begin{enumerate}[label=\arabic*)]
	\item Every $A \in \Delta$ is contained in a chamber.
	\item For two chambers $C,C' \in \Delta$ there is a sequence
	$$C = C_0, C_1, \dots, C_k = D$$
	such that
	$$
	\codim {}_{C_{i-1}}(C_{i-1} \cap C_i) = \codim {}_{C_{i}}(C_{i-1} \cap C_i) \leq 1
	$$
	for $1 \leq i \leq k$.
\end{enumerate}
We call a sequence as in 2) a \textit{gallery (from $C$ to $D$)} and $k$ its \textit{length}.
\end{defi}

Note that the first property is always satisfied if $\rk(\Delta)$ is finite. In this case it is easy to see (cp.\ \cite[1.3, p.15]{Mu94}) that every chamber has the same rank. A consequence from 2) is that every element in a gallery is again a chamber.

\begin{defi}
For two chambers $C,D \in \Delta$ and $A \in \Delta$ with $A \leq C$ and $A \leq D$ we have $\codim_C(A) = \codim_D(A)$ (cp.\ \cite[1.3]{Ti74}). This allows us to define the \textit{corank of $A$} as $\corank(A) = \codim_C(A)$ for any chamber $C$ containing $A$. We call $C$ and $D$ \textit{adjacent}, if $\corank(C \cap D) = 1$.

For $A \in \Delta$ with $\corank(A) = 1$, we call $\Cham(\St(A))$ a \textit{panel} of $\Delta$.

The complex $\Delta$ is meagre (resp.\ thin, firm, thick), if every panel contains at most two (exactly two, at least two, at least three) chambers.

A chamber complex $\Delta$ is \textit{strongly connected}, if $\St(A)$ is a chamber complex for every $A \in \Delta$.
\end{defi}

\begin{defi}
Let $\Delta$ be a chamber complex and $I$ be an index set. A \textit{type function} of $\Delta$ is a morphism of chamber complexes $\tau: \Delta \to \PP(I)$.

A \textit{weak type function} of $\Delta$ is a family of type functions
$$(\tau_C: \ul{C} \to \PP(I))_{C \in \Cham(\Delta)}$$ 
which is compatible in the sense that $\tau_C|_{\ul{C}\cap\ul{D}} = \tau_D|_{\ul{C}\cap\ul{D}}$ for adjacent chambers $C$ and $D$.
\label{def:numb}
\end{defi}

\begin{rem}
A type function $\tau$ with index set $I$ induces a weak type function $(\tau|_{\ul{C}}:\ul{C} \to \PP(I))_{C \in \Cham(\Delta)}$. Conversely, we show in Lemma \ref{wnumisnum} that a weak type function $(\tau_C:\ul{C} \to \PP(I))_{C \in \Cham(\Delta)}$ on a strongly connected chamber complex gives rise to a type function $\tau$ such that $\tau|_{\ul{C}} = \tau_C$.
\end{rem}

We also recall some notions in metric spaces and the definition of gated subsets, following \cite[1.5.3]{Mu94}.

The following lemma is immediate from the definitions.

\begin{lem}
Let $\Delta$ be a chamber complex, $K,K'$ chambers. Define $d_\Delta(K,K')$ to be the length of a minimal gallery from $K$ to $K'$. Then $(\Cham(\Delta), d_\Delta)$ is a connected metric space.
\end{lem}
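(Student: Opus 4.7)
The plan is to verify the three metric axioms for $d_\Delta$ and then conclude that $(\Cham(\Delta), d_\Delta)$ is connected in the sense used throughout the paper (i.e.\ every two points are joined by a finite chain of adjacent points, so that segments $\sigma(\cdot,\cdot)$ are always non-empty).

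First, I would check that $d_\Delta$ is well-defined. By axiom 2) in the definition of a chamber complex, for any two chambers $K, K'$ there exists at least one gallery from $K$ to $K'$; hence the set of gallery lengths is a non-empty subset of $\NN_0$ and its minimum exists. The trivial length-$0$ gallery $K = C_0$ exists precisely when $K = K'$, giving $d_\Delta(K,K') = 0 \iff K = K'$.

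Next I would verify symmetry and the triangle inequality. For symmetry, given a gallery $K = C_0, C_1, \ldots, C_k = K'$, reversing the sequence yields $K' = C_k, C_{k-1}, \ldots, C_0 = K$; since the condition $\codim_{C_{i-1}}(C_{i-1} \cap C_i) = \codim_{C_i}(C_{i-1} \cap C_i) \leq 1$ is symmetric in $C_{i-1}, C_i$, the reversed sequence is again a gallery of the same length, giving $d_\Delta(K',K) \leq d_\Delta(K,K')$, and by symmetry of the argument equality holds. For the triangle inequality, let $K = C_0, \ldots, C_m = K'$ be a minimal gallery from $K$ to $K'$ and $K' = D_0, \ldots, D_n = K''$ be a minimal gallery from $K'$ to $K''$. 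The concatenated sequence $C_0, \ldots, C_m = D_0, D_1, \ldots, D_n$ is again a gallery (consecutive chambers in each half satisfy the required codimension condition, and the ``joining step'' is the identity at $C_m = D_0$), so it has length $m + n$ and we obtain $d_\Delta(K, K'') \leq m + n = d_\Delta(K, K') + d_\Delta(K', K'')$.

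Finally, connectedness: since for every pair $K, K' \in \Cham(\Delta)$ there is a gallery connecting them, $d_\Delta$ takes only finite values, and the chamber graph (with edges the adjacent pairs) is connected in the graph-theoretic sense. This is the notion of connectedness needed so that the definitions of segments $\sigma(K, K')$ and of gates used later in the paper are meaningful.

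The proof is essentially bookkeeping; the only point requiring a moment of care is checking that the concatenation of two galleries is indeed a gallery, but this is immediate from the local nature of condition 2). I do not expect any genuine obstacle here.
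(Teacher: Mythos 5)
Your verification is correct and is exactly the routine check the paper has in mind: the paper itself offers no argument, stating only that the lemma is ``immediate from the definitions.'' All the points you raise (existence of galleries from axiom 2), symmetry by reversal, the triangle inequality by concatenation, and finiteness of distances as the relevant sense of connectedness) are the right ones and are handled correctly.
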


We recall the following proposition.

\begin{theo}[see {\cite[1.5.3]{Mu94}}]
Let $\Delta$ be a chamber complex, such that all sets $\Cham(\St(x))$ are gated for $F \in \Delta$ with $\codim_\Delta(F) \in \{1,2\}$. Let $C$ be a chamber and $\tau$ be type function of $\ul{C}$, there exists a unique weak type function $(\tau_D)_{D \in \Cham(\Delta)}$ such that $\tau_C = \tau$.
\label{gsimpwn}
\end{theo}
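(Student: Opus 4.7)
The plan is to define $\tau_D$ on each chamber $D$ by propagating $\tau$ along a gallery from $C$ to $D$, to verify that the resulting assignment is independent of the chosen gallery, and finally to check compatibility and uniqueness.

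\textbf{Propagation along galleries.} First I would observe that the rule is forced locally: if $E, E'$ are adjacent chambers sharing a panel $P$, and a type function $\sigma$ on $\ul{E}$ is given, then any type function $\sigma'$ on $\ul{E'}$ with $\sigma'|_{\ul{P}} = \sigma|_{\ul{P}}$ is uniquely determined. Indeed, exactly one vertex $v$ of $E'$ lies outside $P$, and $\sigma'(\{v\})$ must be the unique singleton $I \setminus \sigma(P)$, since $\sigma': \ul{E'} \to \PP(I)$ has to be an isomorphism of simplices. Given an arbitrary chamber $D$, I would invoke property (2) of a chamber complex to pick a gallery $C = C_0, C_1, \dots, C_k = D$ and iterate this one-step extension to obtain a candidate type function $\tau_D$ on $\ul{D}$.

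\textbf{Well-definedness.} The central point is to show that $\tau_D$ does not depend on the chosen gallery. I would reduce this to a rank-$2$ statement: any two galleries from $C$ to $D$ can be connected by a finite sequence of elementary modifications, each replacing a subgallery contained entirely in $\Cham(\St(F))$ for some $F \in \Delta$ with $\codim_\Delta(F) = 2$ by another subgallery in $\Cham(\St(F))$ sharing the same initial and terminal chamber. Hence it suffices to treat the case of two galleries in a single rank-$2$ residue $\St(F)$. Here the gatedness hypothesis is the essential input: for every panel $P$ with $F < P$ the set $\Cham(\St(P))$ is gated inside $\Cham(\St(F))$, so that one obtains canonical projections between chambers of the residue. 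These projections force the singleton $I \setminus \sigma(P)$ assigned at each step to depend only on the initial and terminal chamber of the subgallery in $\St(F)$, not on the combinatorial path chosen. This is the technical heart of the proof, and it is where the codimension-$2$ gatedness assumption is genuinely used; the codimension-$1$ gatedness is what makes the elementary step of propagation well-defined to begin with.

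\textbf{Compatibility and uniqueness.} Once well-definedness is established, the compatibility $\tau_D|_{\ul{D}\cap \ul{D'}} = \tau_{D'}|_{\ul{D}\cap \ul{D'}}$ for adjacent $D, D'$ is automatic: pick a gallery from $C$ to $D$ and extend it by the single step from $D$ to $D'$; by construction the two type functions agree on the common panel. Uniqueness is equally immediate. If $(\tau'_D)_{D \in \Cham(\Delta)}$ is another weak type function extending $\tau$, then for any adjacent pair $E, E'$ the forced local rule of the first paragraph shows $\tau'_{E'}$ is uniquely determined by $\tau'_E$; inducting along a gallery from $C$ gives $\tau'_D = \tau_D$ for every chamber $D$.

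\textbf{Main obstacle.} The main obstacle is the well-definedness step, and specifically the reduction of an arbitrary pair of galleries between $C$ and $D$ to an elementary-homotopy chain through codimension-$2$ stars, together with the use of gatedness in each rank-$2$ residue. The reduction is standard for chamber complexes arising from buildings or Coxeter groups, but in the present generality it must be extracted from the chamber-complex axioms alone; exploiting the gate maps in $\Cham(\St(F))$ to conclude path-independence of the type propagation is the crux of the argument.
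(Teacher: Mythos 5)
The first thing to note is that the paper does not actually prove this statement: it is recalled verbatim from M\"uhlherr's thesis \cite[1.5.3]{Mu94} and used as a black box, so there is no in-paper argument to compare yours against line by line. Judged on its own, your outline has the right overall shape (the one-step propagation is indeed forced, and your uniqueness argument by gallery-connectedness is fine), but the well-definedness step contains a genuine gap. You reduce path-independence to the claim that any two galleries from $C$ to $D$ are connected by a chain of elementary modifications, each supported in $\Cham(\St(F))$ for some $F$ with $\codim_\Delta(F)=2$. That is a simple-$2$-connectedness statement about the underlying chamber system; it is not among the chamber-complex axioms, it is not part of the hypotheses, and you give no argument deriving it from gatedness of the corank-$1$ and corank-$2$ residues (a metric condition on small residues, from which a global homotopy statement does not obviously follow). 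You yourself flag this as the ``main obstacle'' but then leave it unproved. Moreover, even granting the reduction, the analysis inside a single rank-$2$ residue is only gestured at: the assertion that the gate projections ``force the singleton $I\setminus\sigma(P)$ to depend only on the initial and terminal chamber'' is precisely the statement that the rank-$2$ residue itself admits a type function, and that genuinely requires proof --- it is false without gatedness (the rank-$2$ complex with vertices $a,b,c$ and chambers $\{a,b\},\{b,c\},\{c,a\}$ admits no type function, and indeed its panels fail to be gated), so the hypothesis must enter through an actual argument, not by fiat.

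The standard route, and the one in \cite[1.5.3]{Mu94}, avoids the gallery-homotopy claim altogether: define $\tau_D$ by induction on $d(C,D)$, extending from a neighbour of $D$ nearer to $C$. Gatedness of the panels shows that through each corank-$1$ face of $D$ there is at most one such nearer neighbour (the gate of that panel), and gatedness of $\Cham(\St(F))$ for $F$ of corank $2$ lets one compare the extensions arriving through two different faces $A_1,A_2$ of $D$ by projecting $C$ to the gate $G$ of the rank-$2$ residue containing $D$, $D_1$, $D_2$ and verifying agreement inside that single residue; compatibility for equidistant adjacent chambers then also falls out of the panel gates. To repair your proof you should either establish the homotopy statement from the hypotheses (and still carry out the rank-$2$ computation) or restructure the argument as this induction on distance.
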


From now on, let $V = \RR^r$ and $(\AAA,T)$ be a simplicial arrangement, with the respective simplicial complex $\SSSS$.

The set $T$ itself is a metric space as a convex open subset of $\RR^r$. We will for the rest of the chapter denote this metric as $d_T$, and the more metric on the chambers as $d_\SSSS$.

Remember that $\AAA$ is locally finite in $T$, which means that if we take a compact (w.\ r.\ t. $d_T$) subset $X \subseteq T$, the set $\sec(X)$ is finite.

At this point we can justify the notion of the chambers $\KKK$.

\begin{lem}
$\Cham(\SSSS) = \{\ol{K}\ |\ K \in \KKK\}$.
\label{chamchar}
\end{lem}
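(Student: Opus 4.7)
The plan is to prove both inclusions of the claimed equality by unpacking the definition $\SSSS = \bigcup_{K \in \KKK} \SSSS_K$ where each member has the form $\ol{K} \cap \bigcap_{H \in \AAA'} H$ for $\AAA' \subseteq W^K$. The ``easy'' inclusion $\{\ol{K} \mid K \in \KKK\} \subseteq \Cham(\SSSS)$ needs maximality of each $\ol{K}$ in the poset, while the reverse inclusion follows from observing that any element with a nonempty intersection factor is strictly contained in the corresponding $\ol{K}$.

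First I would record that $\ol{K} \in \SSSS$ by choosing $\AAA' = \emptyset$ in the defining expression, so every $\ol{K}$ is a candidate for a chamber. For maximality, suppose $\ol{K} \subseteq F$ with $F \in \SSSS$, and write $F = \ol{K'} \cap \bigcap_{H \in \AAA'} H$ for some $K' \in \KKK$ and $\AAA' \subseteq W^{K'}$. The key step is to force $K = K'$: since $K$ is a nonempty open subset of $V$ and $K \subseteq \ol{K}\subseteq \ol{K'} = K' \cup \partial K'$, while $\partial K'$ is contained in the finite union $\bigcup_{H \in W^{K'}} H$ of hyperplanes (a closed set with empty interior), we must have $K \cap K' \neq \emptyset$; since $K$ and $K'$ are connected components of $T \setminus \bigcup_{H \in \AAA} H$, this gives $K = K'$. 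Having identified $K = K'$, the condition $\ol{K} \subseteq \bigcap_{H \in \AAA'} H$ combined with the fact that $\ol{K}$ spans $V$ (as $K$ is an open simplicial cone) forces $\AAA' = \emptyset$, so $F = \ol{K}$.

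For the converse inclusion, let $F \in \Cham(\SSSS)$ be a maximal element. By definition $F$ lies in some $\SSSS_{K}$, hence $F = \ol{K} \cap \bigcap_{H \in \AAA'} H$ with $\AAA' \subseteq W^K$. Since $\ol{K} \in \SSSS$ and $F \subseteq \ol{K}$, maximality of $F$ forces $F = \ol{K}$, which yields the claim. The main potential obstacle is the argument that $K = K'$ when $\ol{K} \subseteq \ol{K'}$; this relies only on the local finiteness of $\AAA$ at interior points of $K$, which is already available from Lemma \ref{compfin} and the simplicial structure recorded in Remark \ref{rem:scs}, so no further machinery is required.
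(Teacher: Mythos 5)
Your proposal is correct and follows essentially the same route as the paper: both directions reduce to the observation that $K$ is a nonempty open set and therefore $\ol{K}$ cannot lie inside a hyperplane, which rules out any proper containment $\ol{K} \subsetneq F$ in $\SSSS$. Your version merely spells out the step the paper compresses (deducing $K = K'$ from $\ol{K} \subseteq \ol{K'}$ via the fact that $\partial K'$ lies in the finitely many walls of $K'$), which is a harmless elaboration rather than a different argument.
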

\begin{proof}
By definition every maximal element in $\SSSS$ is of the form $\ol{K}$ for some $K \in \KKK$. Assume $\ol{K} \subset \ol{K'}$ for $K,K' \in \KKK$, then $\ol{K}$ is contained in some $H \in \AAA$ by Remark \ref{rem:scs}, which contradicts to the definition of $\KKK$. This proves $\Cham(\SSSS) = \{\ol{K}\ |\ K \in \KKK\}$.
\end{proof}

To prove that $\SSSS$ is already a chamber complex, we need a bit more information about the distance between two chambers, depending on the number of hyperplanes separating them.

\begin{lem}
Let $\ol{K} \in \Cham(\SSSS)$, then $\ol{K} = \bigcap_{H \in W^K} D_H(K)$.
\label{chamchar2}
\end{lem}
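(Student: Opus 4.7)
My plan is to reduce the statement to a straightforward consequence of the identification of $W^K$ with the bounding hyperplanes of the simplicial cone $K$. Since $(\AAA,T)$ is a simplicial arrangement, $K$ is an open simplicial cone, so by Remark \ref{rem:scs} we may write $K = K^B = \bigcap_{\alpha \in B}\alpha^+$ for some basis $B \subset V^\ast$, whence
\[
\ol{K} \;=\; \bigcap_{\alpha \in B}\ol{\alpha^+}.
\]
Once I establish $W^K = \{\alpha^\perp \mid \alpha \in B\}$, the lemma follows immediately (reading $D_H(K)$ as the closed half-space containing $K$, since $K$ is open and $\ol{K}$ is closed), because the two intersections then coincide term by term.

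For the inclusion $\{\alpha^\perp \mid \alpha \in B\} \subseteq W^K$, fix $\alpha \in B$. The face $\ol{K} \cap \alpha^\perp$ corresponds, via Remark \ref{rem:scs}, to the codimension one face of $\ul{\ol{K}}$ obtained by dropping the dual vertex of $\alpha$; using the dual basis $C$ of $V$, this face is the convex cone spanned by $\{v \in C \mid \alpha(v)=0\}$, which consists of $r-1$ linearly independent vectors inside the $(r-1)$-dimensional space $\alpha^\perp$. Hence $\langle \ol{K}\cap \alpha^\perp\rangle = \alpha^\perp$, and $\alpha^\perp \cap K = \emptyset$ since $K \subset \alpha^+$. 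Thus $\alpha^\perp \in W^K$.

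For the reverse inclusion, I would take $H = \gamma^\perp \in W^K$ and show $\gamma^\perp = \alpha^\perp$ for some $\alpha \in B$. Since $H \cap K = \emptyset$ and $K$ is open and connected, $K$ lies in one open half-space of $H$, so after replacing $\gamma$ with $-\gamma$ we may assume $\gamma(x) > 0$ for all $x \in K$. Writing $x = \sum_{v \in C}\lambda_v v$ with all $\lambda_v > 0$ for $x \in K$, this forces $\gamma(v) \geq 0$ for every $v \in C$. Set $J := \{v \in C \mid \gamma(v) > 0\}$. The key computation is
\[
H \cap \ol{K} \;=\; \Bigl\{\sum_{v \in C \setminus J} \lambda_v v \,\Bigm|\, \lambda_v \geq 0\Bigr\},
\]
because on $\ol{K}$ all $\lambda_v$ are nonnegative, so $\gamma(x) = \sum_v \lambda_v \gamma(v) = 0$ precisely when $\lambda_v = 0$ for every $v \in J$. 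The wall condition $\langle H \cap \ol{K}\rangle = H$ then forces $|C \setminus J| \geq r-1$, i.e.\ $|J| \leq 1$, and since $\gamma \neq 0$ gives $|J| \geq 1$, we obtain $J = \{v_0\}$ for a unique $v_0 \in C$. Let $\alpha \in B$ be dual to $v_0$; then $H = \langle C \setminus \{v_0\}\rangle = \alpha^\perp$, completing the identification $W^K = \{\alpha^\perp \mid \alpha \in B\}$. The main obstacle is really just being careful with the bookkeeping on the dual basis; the argument contains no genuine difficulty.
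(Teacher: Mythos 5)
Your proposal is correct and follows essentially the same route as the paper: both reduce the claim to the identification $W^K = \{\alpha^\perp \mid \alpha \in B_K\}$ together with $\ol{K} = \bigcap_{\alpha \in B_K}\ol{\alpha^+}$, the only difference being that the paper cites this identification as known (it is recorded in the remark following the definition of root bases) while you verify both inclusions directly from the definition of $W^K$ via the dual basis. Your explicit reading of $D_H(K)$ as the closed half-space is the intended one and matches the paper's usage in its own proof.
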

\begin{proof}
This is a direct consequence of the fact that $\ol{K}= \{x \in T \mid \alpha(x) \geq 0 \text{ for all } \alpha \in B_K\}$ and $W^K = \{r^\perp \mid r \in B_K\}$ for some basis $B_K \subset V^\ast$. For $\alpha \in B_K$ with $\alpha^\perp = H \in W^K$ we find $D_H(K) = \{v \in T \mid \alpha(v) \geq 0\}$.
\end{proof}

\begin{lem}
Two chambers are adjacent in the chamber graph if and only if they are adjacent in the chamber complex. Hence galleries in $\Gamma$ correspond to galleries in $\Cham(\SSSS)$.
\end{lem}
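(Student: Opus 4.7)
My plan is to unpack the two definitions of adjacency and check they describe the same situation, using Lemma~\ref{indfaces} and the simplicial structure of $\ol{K}$ recorded in Remark~\ref{rem:scs}.

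First I would prove the forward implication. Suppose $K,L\in\KKK$ are adjacent in $\Gamma$, so $H := \langle \ol{K}\cap\ol{L}\rangle$ is a hyperplane. By Lemma~\ref{adjsep} (applied in its proof), $H\in W^K\cap W^L$. Lemma~\ref{indfaces} then tells us that $\ol{K}\cap H$ and $\ol{L}\cap H$ are each the closure of a single chamber of $\KKK^H$, and that for any $K'\in\KKK^H$ there are exactly two chambers of $\KKK$ meeting $H$ in $\ol{K'}$. Since $\ol{K}\cap\ol{L}\subseteq \ol{K}\cap H$ and $\ol{K}\cap\ol{L}$ spans the same hyperplane $H$ that $\ol{K}\cap H$ does, and two distinct chambers of $\KKK^H$ cannot have closures sharing a top-dimensional subset, we get $\ol{K}\cap\ol{L}=\ol{K}\cap H=\ol{L}\cap H$. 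This set is the face of the simplex $\ol{K}$ cut out by the single wall $H\in W^K$, hence has codimension $1$ in $\ol{K}$, and similarly in $\ol{L}$. Therefore $\ol{K}$ and $\ol{L}$ are adjacent in the chamber complex.

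For the converse, suppose $\ol{K}$ and $\ol{L}$ are adjacent in $\SSSS$, so $F:=\ol{K}\cap\ol{L}$ satisfies $\codim_{\ol{K}}(F)=\codim_{\ol{L}}(F)=1$. By the simplicial description of $\ol{K}$ in Remark~\ref{rem:scs}, every codimension $1$ face of $\ol{K}$ is of the form $\ol{K}\cap H_K$ for a unique wall $H_K\in W^K$; analogously $F=\ol{L}\cap H_L$ for some $H_L\in W^L$. Since $F$ is a codimension $1$ face of the simplicial cone $\ol{K}$, it spans $H_K$, and likewise it spans $H_L$, so $H_K=H_L=\langle F\rangle=\langle \ol{K}\cap\ol{L}\rangle$ is a hyperplane. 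Hence $K$ and $L$ are adjacent in $\Gamma$.

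The gallery statement then follows directly: a gallery in $\Gamma$ is by definition a path of successively adjacent chambers, while a gallery $\ol{K}=\ol{K}_0,\dots,\ol{K}_k=\ol{L}$ in $\Cham(\SSSS)$ is a sequence with consecutive intersections of corank at most $1$, i.e.\ either equal or adjacent chambers. After deleting repetitions the two notions match up via the equivalence just established. I expect no real obstacle here; the only subtle point to watch is the mild bookkeeping about stammering in the chamber-complex definition, which is handled by deleting consecutive duplicates.
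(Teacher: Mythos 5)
Your argument is correct. For the record, the paper gives no proof of this lemma at all: it is stated as an immediate consequence of the definitions, with the subsequent Definition and Remark (adjacency of closed chambers, together with Lemma~\ref{adjsep}) doing the work of identifying the two notions. So there is nothing in the paper to compare against line by line; your write-up supplies exactly the details the authors chose to omit. Both directions check out: in the forward direction you correctly reduce to showing $\ol{K}\cap\ol{L}=\ol{K}\cap H=\ol{L}\cap H$ and then read off $\codim_{\ol{K}}=1$ from the simplex structure of Remark~\ref{rem:scs}; in the converse you correctly use that every corank-one face of the simplicial cone $\ol{K}$ spans a wall. One stylistic remark: invoking Lemma~\ref{indfaces} (which is about the restricted arrangement $(\AAA^H,T\cap H)$) is heavier machinery than needed. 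Since $\ol{K}\cap\ol{L}$ is a face of $\ol{K}$ contained in the corank-one face $\ol{K}\cap H$ and spans $H$, while every proper face of $\ol{K}\cap H$ spans a proper subspace of $H$, the equality $\ol{K}\cap\ol{L}=\ol{K}\cap H$ follows directly from Lemma~\ref{FacIntWal} and Remark~\ref{rem:scs} without passing through $\KKK^H$; but your route is equally valid. Your handling of the gallery statement (stammering versus non-stammering galleries) is the right level of care for what the paper asserts.
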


\begin{remdef}
Two closed chambers $\ol{K},\ol{L}$ are adjacent, if $\codim_{\ol{K}}(\ol{K} \cap \ol{L}) = 1 = \codim_{\ol{L}}(\ol{K} \cap \ol{L})$. With respect to Lemma \ref{adjsep} we find that $K,L$ are adjacent if and only if $(K,L)$ is a unique minimal gallery from $K$ to $L$.
\end{remdef}

\begin{defi}
For a simplex $F \in \SSSS$ we set
$$\KKK_F := \{K \in \KKK\ |\ F \subset \ol{K}\}$$
and
$$\AAA_F := \{H \in \AAA\ |\ F \subset H\}.$$
With this notation $\Cham(\St(F)) = \{\ol{K} \mid K \in \KKK_F\}$ holds.
\end{defi}

\begin{prop}
The simplicial complex $\SSSS$ is a strongly connected chamber complex.
\label{constrcon}
\end{prop}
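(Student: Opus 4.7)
The plan is to verify the chamber complex axioms first, and then pass to stars via a parabolic subarrangement argument. By construction every $F \in \SSSS$ satisfies $F \leq \ol{K}$ for some $K \in \KKK$, and by Lemma~\ref{chamchar} the maximal elements of $\SSSS$ are exactly the closed chambers $\{\ol{K} \mid K \in \KKK\}$, settling the first axiom. The gallery axiom then follows because the chamber graph $\Gamma$ is connected (Corollary~\ref{cor:gammacon}) and adjacency in $\Gamma$ coincides with adjacency in $\Cham(\SSSS)$ by the lemma immediately preceding this proposition; any path in $\Gamma$ therefore gives a gallery in $\SSSS$.

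For strong connectedness, fix $F \in \SSSS$. The chambers of $\St(F)$ are $\{\ol{K} \mid K \in \KKK_F\}$ and every simplex of $\St(F)$ lies in one such chamber, so only the gallery axiom requires work. The plan is to reduce this to the already-established connectedness of parabolic subgraphs (Lemma~\ref{Kxcon}) by choosing a point $x \in F$ that lies in no proper face of $F$: if $F = \{0\}$ take $x = 0$, otherwise take $x$ to be the sum of nonzero representatives of the vertex rays of $F$, whose linear independence (Remark~\ref{rem:chse}) guarantees that $x$ avoids every proper face. The crux is then the identity $\KKK_F = \KKK_x$. The inclusion $\KKK_F \subset \KKK_x$ is immediate from $x \in F \subset \ol{K}$. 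For the converse, given $K \in \KKK_x$, let $G$ be the unique minimal face of $\ol{K}$ containing $x$; then $G \in \SSSS$, and by Proposition~\ref{Ssimpcomp} the meet $F \cap G$ is a simplex of $\SSSS$ with $F \cap G \leq F$. Since $x \in F \cap G$ lies in no proper face of $F$, we must have $F \cap G = F$, whence $F \subset G \subset \ol{K}$ and $K \in \KKK_F$.

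With $\KKK_F = \KKK_x$ in hand, Lemma~\ref{Kxcon} produces, for any two chambers $K, L \in \KKK_F$, a path in $\Gamma$ whose vertices all lie in $\KKK_F$. Each consecutive pair $K_i, K_{i+1}$ is adjacent in $\SSSS$, and since $F \subset \ol{K_i} \cap \ol{K_{i+1}}$, the common facet belongs to $\St(F)$; the path is therefore a gallery inside $\St(F)$, completing the proof. The main obstacle throughout is the identification $\KKK_F = \KKK_x$, specifically the inclusion $\KKK_x \subset \KKK_F$, which is where the simplicial intersection property of $\SSSS$ (Proposition~\ref{Ssimpcomp}) together with the careful selection of $x$ in the open interior of $F$ is genuinely used; without it, the parabolic connectedness result cannot be directly transferred to the star.
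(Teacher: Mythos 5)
Your proof is correct, and for the strong-connectedness half it takes a genuinely different route from the paper. The paper argues directly with the simplex $F$: it takes $K,L\in\KKK_F$, runs an induction on $d_\SSSS(K,L)=|S(K,L)|$, and shows that a minimal gallery stays inside $\KKK_F$ because every separating hyperplane must contain $F$ (the simplex-analogue of Lemma \ref{starsep}). You instead reduce to the point-based parabolic machinery already developed in Section \ref{sec:hyp}: you pick a relative-interior point $x$ of $F$, prove $\KKK_F=\KKK_x$ using the intersection property of Proposition \ref{Ssimpcomp}, and then invoke the connectedness of parabolic subgraphs (Lemma \ref{Kxcon}). The identification $\KKK_F=\KKK_x$ is the genuine extra content of your argument, and it is consistent with what the paper proves immediately \emph{after} this proposition (the unlabelled lemma asserting that every $F\in\SSSS$ is the minimal simplex containing some $x\in\ol{T}$, used there for gatedness); your version is slightly stronger in that you also verify the converse inclusion $\KKK_x\subseteq\KKK_F$ carefully via the choice of $x$ with all barycentric coordinates positive. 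What your approach buys is reuse: the convexity-of-residues induction is done once (inside Corollary \ref{cor:gammacon} applied to $(\AAA_x,T)$, via Lemma \ref{sepisdis}) rather than repeated for stars; what the paper's approach buys is self-containedness and the explicit fact that \emph{minimal} galleries between chambers of $\St(F)$ already lie in $\St(F)$, which is a bit more information than mere connectedness. Both ultimately rest on the same separation lemma, so the logical dependencies are sound and there is no circularity.
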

\begin{proof}
The complex $\SSSS$ is a chamber complex, since every simplex is contained in a chamber and two chambers $K,L \in \KKK$ are connected by a gallery of length $|S(K,L)|$ by Lemma \ref{sepisdis} and $|S(K,L)|$ is finite by Lemma \ref{finsep}. For two elements $K,L \in \KKK$ we can therefore define the distance $d_\SSSS(K,L)$ as the length of a minimal gallery connecting $K,L$.

Let $F$ be a simplex in $\SSSS$, and consider the simplicial complex $\St(F)$ with chambers $\KKK_F$. Let $K,L \in \KKK_F$ and assume $d_\SSSS(K,L) = n = |S(K,L)| \geq 1$, so $K \neq L$. The fact that $K,L \in \KKK_F$ implies $F \in \ol{K} \cap \ol{L}$.

We need to show that there exists a gallery in $St(F)$ from $K$ to $L$, which we do by induction on $d_\SSSS(K,L)$. For $d_\SSSS(K,L) = 1$ we have that $K,L$ are adjacent. So let $d_\SSSS(K,L) = n$ and assume $K' \in \KKK$ with the properties that $K,K'$ are adjacent, $K \cap K' \subset H_1$ and $S(K,L) = \{H_1, \dots, H_n\}$. Then $H_1 \in \AAA_F$ by Lemma \ref{starsep}, and $F \in \ol{K} \cap H_1$ implies $F \subset \ol{K} \cap \ol{K'}$. In particular we get $\ol{K'} \in \KKK_F$ and by induction there exists a gallery from $K'$ to $L$ in $\KKK_F$, so we are done.
\end{proof}

With respect to Lemma \ref{stargate}, the following Lemma yields that the sets $\Cham(\St(F))$ are gated for all $F \in \SSSS$.

\begin{lem}
For every $F \in \SSSS$, there exists an $x \in \ol{T}$ such that $F$ is the minimal simplex in $\SSSS$ which contains $x$.
\end{lem}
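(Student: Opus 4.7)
The plan is to take $x$ to be a point in the ``relative interior'' of $F$, meaning a point in $F$ that lies in no proper subface. Fix a chamber $K \in \KKK$ with $F \subseteq \ol{K}$ and, using the description of $\ol{K}$ as a simplicial cone from Remark \ref{rem:scs}, write $F = \ol{K} \cap \bigcap_{\alpha \in B'} \alpha^\perp$ for some $B' \subseteq B^K$. Let $C \subset V$ be the basis dual to $B^K$ and $C' \subseteq C$ the subset dual to $B'$; set $x := \sum_{v \in C \setminus C'} v$ (with the empty sum interpreted as $0$). Dual-basis computations give $\alpha(x) = 0$ for every $\alpha \in B'$ and $\alpha(x) = 1$ for every $\alpha \in B^K \setminus B'$, so $x \in F$.

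Next, I verify that $x$ lies in no proper face of $F$ inside $\SSSS$. Since $\SSSS$ is a simplicial complex (Proposition \ref{Ssimpcomp}), $\ul{F}$ is a simplex, and by the reverse-inclusion isomorphism of $\SSSS_K$ with $\PP(B^K)$ recorded in Remark \ref{rem:scs}, every proper face of $F$ has the form $\ol{K} \cap \bigcap_{\alpha \in B''} \alpha^\perp$ for some $B' \subsetneq B'' \subseteq B^K$. For any such face, pick $\alpha \in B'' \setminus B'$; then $\alpha(x) = 1 \neq 0$, so $x \notin \alpha^\perp$ and in particular $x$ is not in that face.

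Finally, suppose $G \in \SSSS$ satisfies $x \in G$. By Proposition \ref{Ssimpcomp}, the set-theoretic intersection $F \cap G$ is the meet in the poset $\SSSS$, so it is a face of $F$, and it contains $x$. The previous paragraph forces $F \cap G = F$, whence $F \subseteq G$. Thus $F$ is contained in every simplex of $\SSSS$ that contains $x$, which is stronger than the stated minimality. The main obstacle is the face analysis in the second paragraph: it requires careful bookkeeping via the duality between $B^K$ and $C$ together with the isomorphism $\SSSS_K \cong \PP(B^K)$, so that proper subfaces of $F$ correspond precisely to supersets of $B'$ in $B^K$. Once this correspondence is set up, the conclusion reduces to the existence of meets in $\SSSS$; the degenerate case $B' = B^K$ (where $F = \{0\}$ and $x = 0$) is handled uniformly by the empty-sum convention.
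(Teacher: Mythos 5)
Your proof is correct and follows essentially the same route as the paper: the paper simply observes that $F$ has nonempty interior relative to $\bigcap_{H \in \AAA'} H$ and takes $x$ there, while you make this explicit by constructing the relative-interior point $x = \sum_{v \in C \setminus C'} v$ and verifying minimality via the correspondence $\SSSS_K \cong \PP(B^K)$. Your version spells out the details (including the minimality check and the degenerate case $F = \{0\}$) that the paper leaves implicit.
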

\begin{proof}
Let $F \in \SSSS$, then there exists $K \in \KKK$ and $\AAA' \subset W^K$ such that $F =\ol{K} \cap \bigcap_{H \in \AAA'} H$. in particular, $F \subset \bigcap_{H \in \AAA'} H$. A point $x$ as above exists, since $F$ has nonempty interior with respect to $\bigcap_{H \in \AAA'} H$.
\end{proof}

We now make use of an abstract result for type functions of chamber complexes. The following lemma is mentioned in \cite{Mu94} as an easy consequence, but we elaborate this result.

\begin{lem}
Let $\Delta$ be a strongly connected chamber complex. If $\Delta$ is weakly numbered, it is already numbered. In particular, if $(\tau_K)_{K \in \Cham(\Delta)}$ is a weak type function, there exists a type function $\tau$ such that $\tau|_{\ul{K}} = \tau_K$ for all $K \in \Cham(\Delta)$.
\label{wnumisnum}
\end{lem}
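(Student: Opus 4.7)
The plan is to define $\tau$ pointwise: for each simplex $F \in \Delta$, pick any chamber $K$ with $F \leq K$ and set $\tau(F) := \tau_K(F)$. Every $F$ is contained in some chamber by axiom~1) of a chamber complex, so the definition makes sense provided we prove it is independent of the choice of $K$.

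For well-definedness, suppose $K$ and $K'$ both contain $F$. Since $\Delta$ is strongly connected, $\St(F)$ is itself a chamber complex, so there exists a gallery $K = K_0, K_1, \dots, K_n = K'$ with $F \leq K_i$ for every $i$. Consecutive chambers $K_{i-1}, K_i$ are adjacent in $\Delta$ and satisfy $F \in \ul{K_{i-1}} \cap \ul{K_i}$, so the compatibility condition in the definition of a weak type function gives $\tau_{K_{i-1}}(F) = \tau_{K_i}(F)$. Chaining these equalities along the gallery yields $\tau_K(F) = \tau_{K'}(F)$, hence $\tau(F)$ is well-defined. This is the step where strong connectedness is essential: without it, one cannot guarantee a gallery from $K$ to $K'$ that stays inside $\St(F)$, and compatibility is only available for adjacent chambers.

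Next, we verify that $\tau \colon \Delta \to \PP(I)$ is a morphism of simplicial complexes. If $F \leq F'$, pick a chamber $K$ with $F' \leq K$; then $F \leq K$ as well, and by construction $\tau(F) = \tau_K(F)$, $\tau(F') = \tau_K(F')$. Since $\tau_K$ is a type function of $\ul{K}$, it is a poset morphism, so $\tau(F) \subseteq \tau(F')$. For any $A \in \Delta$, choose a chamber $K \geq A$; then $\ul{A} \subseteq \ul{K}$ and by well-definedness $\tau|_{\ul{A}} = \tau_K|_{\ul{A}}$, which is an isomorphism $\ul{A} \to \ul{\tau(A)}$ since $\tau_K$ is a type function. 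Therefore $\tau$ is a type function of $\Delta$.

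Finally, $\tau|_{\ul{K}} = \tau_K$ holds for every chamber $K$ directly from the definition, establishing the second assertion. The main obstacle, as noted, is the well-definedness argument; the remaining checks are formal consequences of the fact that each $\tau_K$ is already a type function on $\ul{K}$.
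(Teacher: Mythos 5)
Your proposal is correct and follows essentially the same route as the paper: define $\tau(F):=\tau_K(F)$ for any chamber $K\geq F$, prove well-definedness by taking a gallery inside $\St(F)$ (which exists by strong connectedness) and chaining the compatibility condition over adjacent chambers, and then observe that the morphism properties are inherited from the individual $\tau_K$. Your verification that $\tau$ is a morphism is spelled out in slightly more detail than the paper's, but the substance is identical.
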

\begin{proof}
Let $\mathcal{C} = \Cham(\Delta)$ and $(\tau_K)_{K \in \mathcal{C}}$ be a weak type function, so $\tau_K|_{\ul{K} \cap \ul{L}} = \tau_L|_{\ul{K} \cap \ul{L}}$ for all $K,L \in \mathcal{C}$.

Assume $F \in \Delta$, and let $K,L$ be chambers with $F \in \ul{K},\ul{L}$. Since $\Delta$ is strongly connected, $St(F)$ is connected, and we find $K,L \in St(F)$. Thus we find a gallery $K= K_0, K_1, \dots, K_{k-1}, K_m= L$ from $K$ to $L$ with all $K_i \in St(F)$, so in particular $F \in \ul{K_i}$ for all $0 \leq i \leq m$. Also $K_{i-1}$ and $K_i$ are adjacent for $1 \leq i \leq m$, therefore $\tau_{K_{i-1}}(F) = \tau_{K_i}(F)$ and inductively we obtain $\tau_K(F) = \tau_L(F)$.

This allows us to define $\tau(F) := \tau_K(F)$ for every simplex $F$ and every chamber $K$ containing $F$. By definition $\tau$ coincides with $\tau_K$ on all simplices $F'$ contained in $K$, in particular $\tau|_{\ul{K}}$ is a type function and thus a morphism of chamber complexes.

Finally, $\tau$ itself is a morphism, since every simplex $F$ is contained in a chamber $K$, and $\tau|_{\ul{K}}$ is a morphism.
\end{proof}

With respect to Theorem \ref{gsimpwn}, a direct consequence is the following theorem.

\begin{theo}
The complex $\SSSS$ has a type function. In particular, if we have a type function $\tau_K$ of a closed chamber $\ol{K}$, this extends uniquely to a type function of $\SSSS$.
\label{numbext}
\end{theo}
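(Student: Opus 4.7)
My plan is to combine the three results that have just been assembled in Appendices \ref{APP:S} and \ref{APP:SCC}: strong connectedness of $\SSSS$ (Proposition \ref{constrcon}), gatedness of the stars $\Cham(\St(F))$ for every $F \in \SSSS$ (coming from Proposition \ref{stargate} together with the lemma that every simplex $F$ is the minimal simplex containing some $x \in \ol{T}$), and the two abstract facts about chamber complexes cited from \cite{Mu94}, namely Theorem \ref{gsimpwn} and Lemma \ref{wnumisnum}.

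First, I would fix a closed chamber $\ol{K}$ and a type function $\tau_{\ol{K}}\colon \ul{\ol{K}} \to \PP(I)$ on it; such a $\tau_{\ol{K}}$ exists because $\ul{\ol{K}}$ is isomorphic to $\PP(B^K)$ by Remark \ref{rem:scs}, so one may simply take $I = \{1,\dots,r\}$ together with an indexing of $B^K$ and send a face $F \subset \ol{K}$ to $\kappa(\{i \mid \alpha_i(F)=0\})$. This already proves that type functions on single chambers exist.

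Next, to propagate $\tau_{\ol{K}}$ to the whole complex I would invoke Theorem \ref{gsimpwn}. Its hypothesis requires that $\Cham(\St(F))$ is gated whenever $\codim_\SSSS(F) \in \{1,2\}$; this follows from Proposition \ref{stargate} applied to any interior point $x$ of $F$ (the auxiliary lemma just before Lemma \ref{wnumisnum} supplies such an $x$ so that $F$ is the minimal simplex of $\SSSS$ containing $x$, whence $\KKK_F = \KKK_x$ and $\Cham(\St(F)) = \{\ol{L} \mid L \in \KKK_x\}$). Theorem \ref{gsimpwn} then yields a unique weak type function $(\tau_{\ol{L}})_{\ol{L}\in\Cham(\SSSS)}$ on $\SSSS$ with $\tau_{\ol{K}}$ as its $\ol{K}$-component.

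Finally, I would feed this weak type function into Lemma \ref{wnumisnum}: since $\SSSS$ is strongly connected by Proposition \ref{constrcon}, the weak type function comes from an actual type function $\tau\colon \SSSS \to \PP(I)$ with $\tau|_{\ul{\ol{L}}} = \tau_{\ol{L}}$ for every chamber $\ol{L}$, in particular $\tau|_{\ul{\ol{K}}} = \tau_{\ol{K}}$. Uniqueness of such an extension is immediate: any two type functions of $\SSSS$ restricting to $\tau_{\ol{K}}$ on $\ul{\ol{K}}$ would define two weak type functions agreeing on $\ul{\ol{K}}$, hence coincide by the uniqueness clause of Theorem \ref{gsimpwn}, and therefore agree on every simplex (each simplex lies in some chamber). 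No step should be an obstacle here; the work was really done in assembling strong connectedness and gatedness of parabolic stars, and the theorem is essentially a repackaging of those facts through the two lemmas on chamber complexes.
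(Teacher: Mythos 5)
Your proposal is correct and follows exactly the route the paper intends: the paper states Theorem \ref{numbext} as "a direct consequence" of Theorem \ref{gsimpwn} and Lemma \ref{wnumisnum}, with the gatedness hypothesis supplied by Proposition \ref{stargate} via the lemma identifying each simplex as the minimal simplex containing some point of $\ol{T}$, and strong connectedness supplied by Proposition \ref{constrcon}. You have merely written out the assembly that the paper leaves implicit, including the existence of a type function on a single chamber and the uniqueness argument, both of which are handled correctly.
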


\begin{rem}
The construction of the weak type function is actually quite simple. Begin with a chamber $K$ and consider a type function $\tau$ of $\ul{\ol{K}}$. Let $L$ be adjacent to $K$ such that $F = \ol{K} \cap \ol{L}$. Then set $\tau_L|_{\ul{F}} = \tau|_{\ul{F}}$. Let $i \in I$ be the unique index such that $i \notin \tau(F)$, then $\tau$ maps the vertex not contained in $F$ to $i$, so $\tau_L$ must map the vertex $v$ in $\ul{\ol{L}}$ not contained in $F$ to $i$ as well. So as every simplex $S \neq \emptyset$ is either contained in $F$ or contains $v$, if $S$ is contained in $F$ then $\tau_L(S)$ is already defined, if it contains $v$ set $\tau_d(S) = \tau(S \cap F) \cup \{i\}$. One can check that $\tau_L$ is a morphism of chamber complexes, and furthermore $\tau_L$ is the only possible type function of $\ul{\ol{L}}$ satisfying $\tau_L|_{\ul{F}} = \tau|_{\ul{F}}$.

In this way we can inductively construct type functions for all chambers with arbitrary distance to $K$. This construction works always, however being well defined arises as a problem: Given a chamber $L$ with $d_\SSSS(K,L) = n \geq 2$, there may be two chambers $K_1,K_2$ with $d_\KKK(K,K_1) = d_\KKK(K, K_2) = n-1$ and $K_1,K_2$ adjacent to $L$. Then $L$ has induced type functions from $K_1$ as well as from $K_2$. Now Theorem \ref{numbext} yields that these two induced type functions coincide, and thus the method gives us a weak type function of $\SSSS$.
\end{rem}

Since we introduced the notion of being spherical for arrangements, we recall the respective notion for chamber complexes.

\begin{defi}
We say that a complex $\Delta$ is \textit{spherical} if it contains a pair of chambers $K,K'$ such that $\proj_P(K') \neq K$ for all panels $P$ containing $K$. The chamber $K'$ with such a property is called \textit{opposite} to $K$.
\end{defi}

Some properties of spherical complexes are the following, which can be found in \cite{Mu94}.

\begin{lem} Let $\Delta$ be a chamber complex.
\begin{enumerate}[label=\roman*)]
	\item If $\Delta$ is firm, it is spherical if and only if it has finite diameter.
	\item If $\Delta$ is meager and spherical, $K, K'$ opposite chambers. Then $\Cham(\Delta) = \sigma(K,K')$.
\end{enumerate}
\label{lem:spherprop}
\end{lem}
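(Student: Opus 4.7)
The plan is to handle (i) and (ii) separately, in each case leveraging the gate identity at panels together with the opposite-pair hypothesis.

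For (i), the direction "finite diameter implies spherical" is straightforward: pick $K,K' \in \Cham(\Delta)$ achieving $d_\Delta(K,K') = D := \mathrm{diam}(\Delta)$; if some panel $P$ at $K$ satisfied $\proj_P(K') = K$, then firmness of $\Delta$ provides $L \in \St(P) \setminus \{K\}$ and the gate identity yields
\[
d_\Delta(K',L) \;=\; d_\Delta(K',\proj_P(K')) + d_\Delta(\proj_P(K'),L) \;=\; D + d_\Delta(K,L) \;>\; D,
\]
contradicting the maximality of $D$. Hence $K'$ is opposite $K$. The converse direction is the main obstacle of part (i). Given an opposite pair $K,K'$, I would show $d_\Delta(K,L) \leq d_\Delta(K,K')$ for every $L$ by descent: suppose $L$ violates the bound with $d_\Delta(K,L)$ minimal and pick the first panel $P$ on a minimal gallery from $K$ to $L$. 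The opposite condition provides $M := \proj_P(K') \in \St(P) \setminus \{K\}$ with $d_\Delta(K',M) = d_\Delta(K',K) - 1$, and combining the gate identity at $P$ with the triangle inequality and the minimality of $L$ should yield a contradiction; the precise form of the descent requires a case split depending on whether the second chamber $K_1$ of the gallery equals $M$, and in the awkward case $K_1 \neq M$ one must iterate the argument along the gallery.

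For (ii), with the stronger meager hypothesis $|\St(P)| \leq 2$ for every panel $P$, I would establish that every chamber $L$ lies on a minimal gallery from $K$ to $K'$, i.e.\ $d_\Delta(K,L) + d_\Delta(L,K') = d_\Delta(K,K')$, by induction on $d_\Delta(K,L)$. The base case $L = K$ is trivial. For the step, take $L'$ adjacent to $L$ on a minimal gallery from $K$ to $L$, sharing a panel $P$; meagerness forces $\St(P) = \{L,L'\}$, and the induction hypothesis gives $d_\Delta(K,L') + d_\Delta(L',K') = d_\Delta(K,K')$. Since $\proj_P(K') \in \{L,L'\}$, the case $\proj_P(K') = L$ gives the desired identity directly via the gate identity. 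The main obstacle is excluding $\proj_P(K') = L'$: in that case $d_\Delta(L,K') = d_\Delta(L',K') + 1$, forcing $d_\Delta(K,L) + d_\Delta(L,K') = d_\Delta(K,K') + 2$, and I would propagate this excess of $2$ backward along the minimal gallery $K \to \cdots \to L' \to L$, invoking meagerness at each intermediate panel, to produce a panel at $K$ whose gate-projection of $K'$ coincides with $K$, contradicting the oppositeness of $K, K'$. Part (i) also supports the induction insofar as it identifies $d_\Delta(K,K')$ with $\mathrm{diam}(\Delta)$, which bounds all intermediate distances appearing in the argument.
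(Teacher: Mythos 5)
The paper does not actually prove this lemma: it is quoted from M\"uhlherr's thesis \cite{Mu94} without argument, so there is no internal proof to compare yours against and a self-contained attempt must stand on its own. Of the three nontrivial implications, you complete only one. Your argument that finite diameter implies sphericality under firmness is correct: a pair $K,K'$ realizing the (integer-valued, hence attained) diameter $D$ cannot have $\proj_P(K')=K$ for a panel $P$ of $K$, since firmness supplies a second chamber $L\in P$ adjacent to $K$ and the gate identity gives $d_\Delta(K',L)=D+1>D$. (You should state explicitly that the gates $\proj_P(K')$ are assumed to exist; the lemma only makes sense for complexes whose panels are gated, and if one merely requires the particular projections in the definition of ``opposite'' to exist, the converse in (i) can actually fail.) This happens to be the only half of (i) the paper ever uses.

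The other two implications are not proved, and the steps you defer are precisely the content of the lemma; moreover the mechanisms you sketch do not close. For the converse of (i): you propose to bound $d_\Delta(K,L)$ by taking a minimal violator and descending along a gallery from $K$, but the opposite condition only produces, at the first panel $P$, a chamber $M=\proj_P(K')$ with $d_\Delta(K',M)=d_\Delta(K',K)-1$. This controls distances \emph{to $K'$}, whereas the quantity to be bounded is the distance \emph{from $K$}; the minimal gallery to $L$ need not pass through $M$, and even when it does, knowing that $M$ is closer to $K'$ gives no information about $d_\Delta(K,L)$, so no combination of the gate identity at $P$ with the triangle inequality yields the advertised contradiction. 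For (ii): the induction and the case $\proj_P(K')=L$ are handled correctly, but the ``excess of $2$'' in the bad case cannot be propagated backward, because by your own inductive hypothesis every earlier chamber $K_0,\dots,K_{m-1}=L'$ of the gallery already satisfies $d_\Delta(K,K_i)+d_\Delta(K_i,K')=d_\Delta(K,K')$ exactly; the excess appears only at the final panel, which is a panel of $L$ and $L'$, not of $K$, and your sketch contains no mechanism transporting a contradiction to a panel of $K$. Both gaps call for a genuinely global argument about gated sets (which is where \cite{Mu94} does its work), not a local descent along a single gallery.
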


To complete the proof of Proposition \ref{prop-Sproperties}, we also need the following observation.

\begin{lem}
The simplicial complex $\SSSS$ is thin (resp.\ spherical) if and only if the simplicial arrangement $(\AAA,T)$ is thin (resp.\ spherical).
\end{lem}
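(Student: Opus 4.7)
The plan is to prove the two equivalences separately, with the spherical converse being the main technical step.

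For thinness, I will classify the panels of $\SSSS$. Each panel has the form $\Cham(\St(F))$ for a corank-$1$ simplex $F = \ol{K} \cap H$ with $K \in \KKK$ and $H \in W^K$. If $H \in \AAA$, Lemma \ref{indfaces} immediately yields exactly two chambers containing $F$. If $H \notin \AAA$, Lemma \ref{boundwalls} gives $H \cap T = \emptyset$, and only $K$ can contain $F$: for any candidate $L \neq K$, a separating hyperplane $H' \in \AAA$ would satisfy $\ol{K} \cap \ol{L} \subseteq H'$, and since $\langle F \rangle = H$ is a hyperplane this would force $H = H' \in \AAA$, a contradiction. Hence $\SSSS$ is thin iff every wall of every chamber lies in $\AAA$, i.e.\ iff $(\AAA,T)$ is thin. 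The same case analysis shows $\SSSS$ is always meager (every panel has at most two chambers).

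For the spherical forward direction, assume $T = \RR^r$. Then $\AAA$ is finite (local finiteness at $0 \in T$), and every wall $H \in W^K$ satisfies $H \cap T = H \neq \emptyset$, hence lies in $\AAA$ by Lemma \ref{boundwalls}; so $(\AAA,T)$ is thin. By the thin part $\SSSS$ is thin (therefore firm), and finiteness of $|\KKK|$ bounds the diameter, so Lemma \ref{lem:spherprop}(i) gives $\SSSS$ spherical. Conversely, suppose $\SSSS$ is spherical with opposite chambers $K, K'$. Meagerness of $\SSSS$ and the opposite condition force every panel of $K$ to have exactly two chambers, whence $W^K \subseteq \AAA$. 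For each $H \in W^K$, the gate of $K'$ at the panel $\Cham(\St(\ol{K} \cap H))$ must be the chamber $L$ across $H$ from $K$, and the gate identity $d_\SSSS(K',L) = d_\SSSS(K',K) - 1$ forces $H$ to separate $K$ from $K'$. Since $K$ is an open simplicial cone with $K = \bigcap_{H \in W^K} D_H(K)$ by Remark \ref{rem:scs}, we deduce
\[
K' \ \subseteq \ \bigcap_{H \in W^K}\bigl(-D_H(K)\bigr) \ = \ -K.
\]
Picking any $v \in K'$ then gives $-v \in K$, so both $v$ and $-v$ lie in $T$; convexity of $T$ yields $0 = \tfrac{1}{2}v + \tfrac{1}{2}(-v) \in T$, and since $T$ is an open cone this forces $T = \RR^r$.

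The hard part will be precisely this last step: extracting from the purely combinatorial notion of opposite chambers in $\SSSS$ the concrete geometric containment $K' \subseteq -K$, and then using convexity of the Tits cone to pin down $T = \RR^r$. The thin part and the spherical forward direction are comparatively routine applications of Lemma \ref{indfaces}, Lemma \ref{boundwalls}, and Lemma \ref{lem:spherprop}.
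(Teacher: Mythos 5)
Your proof is correct. The thin equivalence and the forward spherical implication follow the paper's route: the paper likewise observes that $\SSSS$ is meager because each hyperplane has two sides, that thinness of $\SSSS$ amounts to every wall admitting an adjacent chamber (which forces the wall to meet $T$ and hence lie in $\AAA$), and that $T=\RR^r$ gives finiteness and thinness, whence sphericity via Lemma \ref{lem:spherprop}(i). Your converse for sphericity, however, is genuinely different. The paper invokes Lemma \ref{lem:spherprop}(ii) to get $\KKK=\sigma(C,C')$, deduces $S(C,C')=\AAA$ and hence finiteness of $\AAA$ and $\KKK$, then shows thinness, and finally runs a topological argument on $\partial T$: a boundary point off all (finitely many) hyperplanes would have a neighborhood meeting a single chamber, producing a wall that misses $T$. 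You instead extract from the opposition condition, via the gates of the panels of $K$ (Proposition \ref{stargate} and Lemma \ref{sepisdis}), that every $H\in W^K$ separates $K$ from $K'$, conclude $K'\subseteq -K$, and obtain $0\in T$ directly, which forces $T=\RR^r$ since $T$ is an open cone. Your route buys a shorter, purely gate-theoretic argument that bypasses both Lemma \ref{lem:spherprop}(ii) and the boundary analysis; the paper's route yields the finiteness of $\AAA$ and the identity $S(C,C')=\AAA$ as intermediate information, but these also follow a posteriori from $T=\RR^r$. Both are complete.
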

\begin{proof}
Thin: Since $V \setminus H$ has two connected components, the complex $\SSSS$ is meager. It is thin if and only if for every chamber $C$ and every wall $H \in W^C$ there exists a chamber $C_H$ which is $H$-adjacent to $C$. In this case $\overline{C} \cap \overline{C_H} \subset T$, since $T$ is convex. Then $\overline{C} \cap \overline{C_H} \subset H$, therefore $H$ meets $T$ and is contained in $\AAA$.

Spherical: Assume $(\AAA, T)$ is spherical, then $T = V$ by definition and $\AAA$ is finite. Hence also $\KKK$ is finite and $\SSSS$ is thin, and therefore spherical by Lemma \ref{lem:spherprop}.

Let $\SSSS$ be spherical. By definition we find two opposite chambers $C$ and $C'$. As $\SSSS$ is meagre by construction, we also know $\KKK = \sigma(C,C')$. In particular $S(C,C') = \AAA$, and $\KKK$ is finite as well as $\AAA$.

Assume $C$ has no $i$-adjacent chamber for an $i \in I$ and let $P$ be the $i$-panel containing $C$, then $\proj_{P}(D) = C$ for all $D \in \KKK$, a contradiction. Thus $\SSSS$ is also thin. By our previous argument therefore $(\AAA,T)$ is thin.

Let $x \in \partial T$. Since $T$ can not be written as a finite union of hyperplanes, we can assume that $x \notin H$ for all $H \in \AAA$. Thus take a neighborhood $U$ of $x$ and consider the chambers intersecting $U$. By taking a smaller $U$ we can also assume that only a single chamber $D$ intersects $U$. Thus there exists a wall of $D$ not meeting $T$, a contradiction to $\AAA$ being thin. Hence $\partial T$ is empty and $T = V$ holds.
\end{proof}

\newpage


\providecommand{\bysame}{\leavevmode\hbox to3em{\hrulefill}\thinspace}
\providecommand{\MR}{\relax\ifhmode\unskip\space\fi MR }
\providecommand{\MRhref}[2]{%
  \href{http://www.ams.org/mathscinet-getitem?mr=#1}{#2}
}
\providecommand{\href}[2]{#2}

\end{document}